\newtheorem{definition}{Definition}[section]
\newtheorem{theorem}[definition]{Theorem}
\newtheorem{lemma}[definition]{Lemma}
\newtheorem{corollary}[definition]{Corollary}
\newtheorem{remark}[definition]{Remark}
\newtheorem{proposition}[definition]{Proposition}
\begin{document} 

\title{\bf A $Q$-polynomial structure for the\\  Attenuated Space poset $\mathcal A_q(N,M)$
}
\author{
Paul Terwilliger}
\date{}

\maketitle
\begin{abstract} The goal of this article is to display a $Q$-polynomial structure for the Attenuated Space poset $\mathcal A_q(N,M)$.
The poset   $\mathcal A_q(N,M)$ is briefly described as follows.  Start with an $(N+M)$-dimensional vector space $H$ over a finite field with $q$ elements. Fix an $M$-dimensional subspace $h$ of $H$.
The vertex set $X$ of $\mathcal A_q(N,M)$ consists of the subspaces of $H$ that have zero intersection with $h$. The partial order on $X$  is the inclusion relation.
The  $Q$-polynomial structure involves two matrices $A, A^* \in {\rm Mat}_X(\mathbb C)$ with the following entries.
For $y, z \in X$ the matrix $A$ has $(y,z)$-entry $1$ (if $y$ covers $z$); $q^{{\rm dim}\,y}$ (if $z$ covers $y$); and 0 (if neither of  $y,z$ covers the other).
The matrix $A^*$ is diagonal, with $(y,y)$-entry $q^{-{\rm dim}\,y}$ for all $y\in X$. By construction, $A^*$ has $N+1$ eigenspaces. By construction, $A$
acts on these eigenspaces in a (block) tridiagonal fashion.
We show that $A$ is diagonalizable, with $2N+1$ eigenspaces.
We show that $A^*$ acts on these eigenspaces  in
a (block) tridiagonal fashion. Using this action, we show that $A$ is $Q$-polynomial. We show that $A, A^*$ satisfy a pair of relations called the tridiagonal relations.
We consider the subalgebra $T$ of ${\rm Mat}_X(\mathbb C)$ generated by $A, A^*$. 
 We show that $A,A^*$ act on each irreducible $T$-module as a Leonard pair.

\bigskip

\noindent
{\bf Keywords}. Adjacency matrix; dual adjacency matrix; $Q$-polynomial; Leonard pair.
\hfil\break
\noindent {\bf 2020 Mathematics Subject Classification}.
Primary: 06A11;
Secondary 05C50;  05E30.
 \end{abstract}
 
 \section{Introduction}
 In the subject of Algebraic Combinatorics, there is a type of finite, undirected, connected graph, said to be distance-regular 
 \cite{bannai, bbit, bcn, dkt}. Many distance-regular graphs enjoy an algebraic property called $Q$-polynomial. This property
 was introduced by Delsarte \cite{delsarte} and further investigated in \cite{bannai, bbit, bcn}. Some surveys about the
 $Q$-polynomial property can be found in \cite{dkt, int}.
 \medskip
 
 \noindent In \cite[Section~20]{int} and  \cite[Section~2]{Lnq} we extended the $Q$-polynomial property to graphs that are not necessarily distance-regular. 
 In \cite[Section~3]{Lnq} we considered a partially ordered set called the projective geometry $L_N(q)$. We extended the $Q$-polynomial property to 
 $L_N(q)$, by viewing its Hasse diagram as an undirected graph. In \cite{Lnq} we showed that $L_N(q)$ has a $Q$-polynomial structure, and we described
 this structure in detail.
 \medskip
 
 \noindent The poset $L_N(q)$ has a relative called the Attenuated Space poset $\mathcal A_q(N,M)$; see
 \cite[Section~9.5]{bcn} and \cite{WenLiu}. We now summarize some results about $\mathcal A_q(N,M)$. 
 In \cite{delsarte2} Delsarte showed that $\mathcal A_q(N,M)$ is a regular semilattice.
 In \cite{sprague}  Sprague characterized $\mathcal A_q(N,M)$ in terms of diagram geometries.
 In \cite{THuang} Huang used $\mathcal A_q(N,M)$ to characterize the association scheme of the bilinear forms.
 In \cite{uniform} we showed that $\mathcal A_q(N,M)$ is uniform in the sense of \cite[Definition~2.2]{uniform}.
 In \cite[Section~40]{quantumMatroid}  we showed that $\mathcal A_q(N,M)$ is a regular quantum matroid.
 In \cite{bonoli}
 Bonoli and Melone  gave a geometrical
characterization of $\mathcal A_q(N,M)$.
In \cite{tanaka} Tanaka used $\mathcal A_q(N,M)$ to describe the subsets of minimal width and dual width in the association scheme of the bilinear forms.
In \cite{guo, wang}
Wang, Guo, and Li constructed an association
scheme based on $\mathcal A_q(N,M)$.
They found its intersection numbers and
investigated the incidence matrices.
The character table of this association scheme was found by 
Kurihara \cite{kurihara}.
In \cite{terAugDU} the present author and Worawannotai used $\mathcal A_q(N,M)$ to obtain a representation of an augmented down-up algebra.
In \cite{gao} 
Gao and Wang used $\mathcal A_q(N,M)$ to obtain some error-correcting codes.
In \cite{LiuWang} 
 Liu and Wang found the automorphism group of some graphs based
on $\mathcal A_q(N,M)$.
\medskip

\noindent 
The poset   $\mathcal A_q(N,M)$ is briefly described as follows (formal definitions begin in Section 2).  Start with an $(N+M)$-dimensional vector space $H$ over a finite field with $q$ elements. Fix an $M$-dimensional subspace $h$ of $H$.
The vertex set $X$ of $\mathcal A_q(N,M)$ consists of the subspaces of $H$ that have zero intersection with $h$. The partial order on $X$  is the inclusion relation.
The poset $\mathcal A_q(N,M)$ is ranked with rank $N$; the rank of a vertex is equal to its dimension.
\medskip

\noindent  The main goal of this article
 is to display a $Q$-polynomial
structure for $\mathcal A_q(N,M)$. We now summarize our results.
We define two matrices $A, A^* \in {\rm Mat}_X(\mathbb C)$ with the following entries.
For $y, z \in X$ the matrix $A$ has $(y,z)$-entry $1$ (if $y$ covers $z$); $q^{{\rm dim}\,y}$ (if $z$ covers $y$); and 0 (if neither of  $y,z$ covers the other).
The matrix $A^*$ is diagonal, with $(y,y)$-entry $q^{-{\rm dim}\,y}$ for all $y\in X$. By construction, $A^*$ has $N+1$  eigenvalues $\lbrace q^{-i} \rbrace_{i=0}^N$. 
For $0 \leq i \leq N$ the eigenspace for $q^{-i}$ is called the $i$th subconstituent of $\mathcal A_q(N,M)$.
 By construction, $A$
acts on these subconstituents in a (block) tridiagonal fashion.
We show that $A$ is diagonalizable, with  $2N+1$ eigenspaces. 
We show that $A^*$ acts on these eigenspaces  in
a (block) tridiagonal fashion. Using the $A^*$ action, we show that $A$ is $Q$-polynomial. This is our main result; see Theorem  \ref{thm:mainRes} below.
The following subsidiary results may be of independent interest.
For each eigenspace of $A$ we compute the dimension (Theorem \ref{thm:dimEV}) and corresponding eigenvalue (Theorem \ref{thm:AeigVal}).
We show that $A, A^*$ satisfy a pair of relations called the tridiagonal relations (Propositions \ref{prop:TD1}, \ref{cor:TD2}). To obtain these relations, we write 
$A=R+L$ where $R$ raises the subconstituents and $L$ lowers the subconstituents. By construction, $RA^*=q A^* R$ and $LA^* = q^{-1} A^* L$. We show that $R, L$ satisfy a pair of relations called the down-up relations (Proposition \ref{lem:RLAs}).
We consider the subalgebra $T$ of ${\rm Mat}_X(\mathbb C)$ generated by $A, A^*$. 
 We show that $A,A^*$ act on each irreducible $T$-module as a Leonard pair (Section 11).
\medskip

\noindent We call $A$ the $q$-adjacency matrix of $\mathcal A_q(N,M)$. We would like to acknowledge that $A$ is motivated by the breakthrough article \cite{murali} coauthored by Ghosh and Srinivasan.
\medskip

\noindent The paper is organized as follows. Section 2 contains some preliminaries.
 In Section 3, we define the poset $\mathcal A_q(N,M)$ and give some basic facts about it.
In Section 4, we introduce the matrix $A^*$ and investigate the algebra it generates.
In Section 5, we introduce the matrix $R$  and a variation $L'$ of the matrix $L$. We describe how $R, L', A^*$ are related.
In Section 6, we introduce the algebra $T$ and describe the irreducible $T$-modules. We consider how $R, L', A^*$ act on each irreducible $T$-module.
In Section 7, we introduce the matrix $L$ and describe how $R, L, A^*$ are related. We consider how $L$ acts on each irreducible $T$-module.
In Section 8, we introduce the matrix $A$ and describe how $A, A^*$ are related.
In Section 9, we show that $A$ is diagonalizable, with $2N+1$ eigenspaces. For each eigenspace, we describe the dimension  and eigenvalue.
In Section 10, we show that $A^*$ acts on the eigenspaces of $A$ in a (block) tridiagonal fashion. We use this action to show that $A$ is $Q$-polynomial.
In Section 11, we show that $A, A^*$ act on each irreducible $T$-module as a Leonard pair.

 \section{Preliminaries}
 
\noindent We now begin our formal argument. In this section, we review some basic concepts and establish some notation.
Recall the natural numbers $\mathbb N = \lbrace 0,1,2,\ldots \rbrace$ and integers $\mathbb Z = \lbrace 0, \pm 1, \pm 2, \ldots\rbrace$.
Let $\mathbb C$ denote the field of complex numbers. 
Let $X$ denote a nonempty  finite  set. An element of $X$ is called a {\it vertex}.
Let ${\rm Mat}_X(\mathbb C)$
denote the $\mathbb C$-algebra
consisting of the matrices with rows and columns  indexed by $X$
and all entries in $\mathbb C  $. Let $I \in{ \rm Mat}_X(\mathbb C)$ denote the identity matrix.
 Let
$V=\mathbb C^X$ denote the vector space over $\mathbb C$
consisting of the column vectors with
coordinates indexed by $X$ and all entries 
in $\mathbb C$.
The algebra ${\rm Mat}_X(\mathbb C)$
acts on $V$ by left multiplication.
We call $V$ the {\it standard module}.
We endow $V$ with a Hermitean form $\langle \, , \, \rangle$ 
that satisfies
$\langle u,v \rangle = u^t {\overline v}$ for 
$u,v \in V$,
where $t$ denotes transpose and $-$ denotes complex conjugation. 
For all $y \in X,$ define a vector $\hat{y} \in V$ that has $y$-coordinate  $1$ and all other coordinates $0$.
The vectors $\lbrace \hat y \rbrace_{y \in X}$ form an orthonormal basis for $V$.
\medskip

\noindent
Throughout the paper, we apply linear algebra using
the conventions and notation of \cite[pp.~5,~6]{nomSpinModel}.
 %
   
\section{The Attenuated Space poset $\mathcal A_q(N,M)$}

 In this section, we recall the Attenuated Space poset $\mathcal A_q(N,M)$. We refer the reader to \cite[Section~9.5]{bcn} and \cite{WenLiu, uniform, quantumMatroid} for background information about this poset.
 \medskip
 
 \noindent Let $\mathbb F_q$ denote a finite field with $q$ elements. Let $N, M$ denote positive integers. Let $H$ denote a vector space over $\mathbb F_q$
 that has dimension $N+M$. Fix a subspace $h \subseteq H$ that has dimension $M$. Let the set $X$ consist of the subspaces of $H$ that have zero intersection 
 with $h$.  Define a partial order $\leq $ on $X$ such that $y \leq z$ whenever $y \subseteq z$ $(y, z \in X)$. The partially ordered set
  $X, \leq$ is denoted by
 $\mathcal A_q(N,M)$ and
 called the {\it Attenuated Space poset}.  The poset $\mathcal A_q(N,M)$ is ranked with rank $N$; the rank of a vertex is equal to its dimension.
 \medskip
 
 \noindent As we discuss $\mathcal  A_q(N,M)$ the following concepts and notation will be useful. Let $y, z \in X$. Define $y < z$ whenever $y \leq z$ and $ y \not=z$. 
 We say that $z$ {\it covers} $y$ whenever $y < z$ and there does not exist $w \in X$ such that $y < w < z$. Note that $z$ covers $y$ if and only if $y \leq z$ and
 ${\rm dim}\,z-{\rm dim}\,y = 1$. We say that $y,z$ are {\it adjacent} whenever one of $y, z$ covers the other one.
  For $n \in \mathbb N$ define
  \begin{align*}
  \lbrack n \rbrack_q = \frac{q^n-1}{q-1}.
  \end{align*}
  \noindent We further define
  \begin{align*}
  \lbrack n \rbrack^!_q =  \lbrack n \rbrack_q  \lbrack n-1 \rbrack_q\cdots \lbrack 2 \rbrack_q \lbrack 1 \rbrack_q.
  \end{align*}
  \noindent We interpret $\lbrack 0 \rbrack^!_q =1$. For  $0 \leq i \leq n$ define
  \begin{align*}
  \binom{n}{i}_q =  \frac{\lbrack n \rbrack^!_q}{\lbrack i \rbrack^!_q \lbrack n-i\rbrack^!_q}.
  \end{align*}
  For notational convenience, define $\binom{n}{i}_q=0$ for all integers $i$ such that $i<0$ or $i>n$.
\medskip

\noindent The following results are well known; see for example \cite{quantumMatroid}.
 
 \begin{lemma} Let $0 \leq i \leq N$ and let $y \in X$ have dimension $i$.
 \begin{enumerate}
 \item[\rm (i)] $y$ covers exactly $\lbrack i \rbrack_q $ vertices;
 \item[\rm (ii)] $y$ is covered by exactly $q^M \lbrack N-i\rbrack_q$ vertices.
 \end{enumerate}
 \end{lemma}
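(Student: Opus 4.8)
The plan is to count, for a fixed vertex $y \in X$ of dimension $i$, the number of vertices it covers (part (i)) and the number of vertices that cover it (part (ii)), working directly with the defining property that every vertex is a subspace of $H$ meeting $h$ in the zero subspace. For part (i), a vertex $z$ covered by $y$ is precisely an $(i-1)$-dimensional subspace $z \subseteq y$; since $z \subseteq y$ and $y \cap h = 0$ force $z \cap h = 0$ automatically, every $(i-1)$-dimensional subspace of the $i$-dimensional space $y$ is a valid vertex. Hence I would count the $(i-1)$-dimensional subspaces of an $i$-dimensional $\mathbb F_q$-vector space, which is the Gaussian binomial coefficient $\binom{i}{i-1}_q = \binom{i}{1}_q = [i]_q$. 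This gives (i) immediately.

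For part (ii), a vertex covering $y$ is an $(i+1)$-dimensional subspace $z$ with $y \subseteq z$ and $z \cap h = 0$. First I would recall that such $z$ are in bijection with the $1$-dimensional subspaces (lines) of the quotient space $H/y$, via $z \mapsto z/y$. The dimension of $H/y$ is $N+M-i$. The extra condition $z \cap h = 0$ translates, in the quotient, to the requirement that the line $z/y$ avoid the image $(y+h)/y$ of $h$ in $H/y$. Since $y \cap h = 0$, the image $(y+h)/y$ has dimension $M$, so I am counting the lines of an $(N+M-i)$-dimensional space that are not contained in a fixed $M$-dimensional subspace.

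To carry out this count I would use inclusion-exclusion on the nonzero vectors: the total number of lines in $H/y$ is $[N+M-i]_q$, and the number of lines lying inside the $M$-dimensional forbidden subspace is $[M]_q$. However, a cleaner route is to count vectors directly: a covering $z$ is obtained by choosing a coset representative, i.e. a nonzero vector $v \in H/y$ not lying in the forbidden $M$-dimensional subspace, and then dividing by the $q-1$ scalars giving the same line. The number of such vectors is $(q^{N+M-i} - q^M)$, namely all nonzero vectors of $H/y$ outside the forbidden subspace, so the number of valid lines is
\begin{align*}
\frac{q^{N+M-i}-q^M}{q-1} = q^M \,\frac{q^{N-i}-1}{q-1} = q^M [N-i]_q,
\end{align*}
which is the claimed count. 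I would double-check the bijection between covering vertices and these lines, confirming that distinct lines avoiding the forbidden subspace pull back to distinct valid vertices $z$ and conversely.

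I expect the main obstacle to be the bookkeeping in part (ii): correctly identifying the forbidden subspace in the quotient and verifying that it has dimension exactly $M$ (which relies crucially on $y \cap h = 0$), together with confirming that the correspondence $z \leftrightarrow z/y$ sends precisely the valid covering vertices to precisely the admissible lines. Part (i) is essentially immediate once one observes that the zero-intersection-with-$h$ condition is inherited by subspaces. The computation itself reduces to standard Gaussian-binomial identities, so the substance lies in setting up the quotient correspondence cleanly rather than in any hard estimate.
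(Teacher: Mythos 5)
Your proposal is correct, but there is nothing in the paper to compare it against: the paper states this lemma as ``well known,'' citing the quantum matroids reference, and gives no proof at all. Your argument supplies a complete, self-contained one. The two points that carry the substance are both handled properly: in (i), the observation that the condition $z \cap h = 0$ is inherited by subspaces of $y$, so the count is just the number of hyperplanes $\binom{i}{i-1}_q = [i]_q$ of an $i$-dimensional space; and in (ii), the correspondence $z \leftrightarrow z/y$ between covers of $y$ and lines of $H/y$, under which $z \cap h \neq 0$ holds exactly when $z \subseteq y + h$, i.e.\ when the line $z/y$ lies inside $(y+h)/y$ --- a subspace of dimension exactly $M$ because $y \cap h = 0$. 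The vector count $(q^{N+M-i} - q^M)/(q-1) = q^M[N-i]_q$ is then immediate, using that a line either lies in the forbidden subspace or meets it only in $0$. One cosmetic quibble: a nonzero element $v \in H/y$ is itself a coset, not a ``coset representative,'' but this does not affect the argument.
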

 \begin{lemma} \label{lem:subSize} For $0 \leq i \leq N$, the number of vertices in $X$ that have dimension $i$ is equal to $q^{Mi} \binom{N}{i}_q$.
 \end{lemma}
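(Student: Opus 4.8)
The plan is to count directly the number of $i$-dimensional subspaces $y \subseteq H$ satisfying $y \cap h = 0$, which is exactly the number of vertices of dimension $i$. The guiding principle is that choosing such a $y$ amounts to choosing a basis for $y$ one vector at a time, subject to the two constraints that the new vector be linearly independent from the previously chosen ones and that $y$ meet $h$ trivially, and then dividing by the number of ordered bases of a fixed $i$-dimensional space.

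First I would count the number of ordered linearly independent $i$-tuples $(v_1, \dots, v_i)$ in $H$ that span a subspace meeting $h$ in $0$. Equivalently, I would require that at each stage the partial span $\langle v_1, \dots, v_j\rangle$ has zero intersection with $h$; one checks that $\langle v_1,\dots,v_i\rangle \cap h = 0$ if and only if each successive $v_{j+1}$ avoids the subspace $h + \langle v_1, \dots, v_j\rangle$. So the count is
\begin{align*}
\prod_{j=0}^{i-1} \bigl( q^{N+M} - q^{M+j} \bigr),
\end{align*}
since at step $j+1$ the forbidden set $h + \langle v_1,\dots,v_j\rangle$ has dimension $M + j$ (the sum is direct because the partial span already meets $h$ trivially), giving $q^{M+j}$ forbidden vectors out of $q^{N+M}$ total.

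Next I would divide by the number of ordered bases of a fixed $i$-dimensional $\mathbb F_q$-space, which is the familiar quantity $\prod_{j=0}^{i-1}(q^i - q^j)$. Factoring $q^M$ out of each of the $i$ numerator factors contributes $q^{Mi}$, and the remaining ratio
\begin{align*}
\frac{\prod_{j=0}^{i-1}(q^N - q^j)}{\prod_{j=0}^{i-1}(q^i - q^j)}
\end{align*}
is the standard $q$-binomial coefficient $\binom{N}{i}_q$ (this is the usual Gaussian-binomial identity, expressing the number of $i$-subspaces of an $N$-dimensional space). Multiplying gives $q^{Mi}\binom{N}{i}_q$, as claimed.

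I do not expect a serious obstacle here, as this is a standard Gaussian-coefficient enumeration; the one point requiring care is the inductive claim that the partial spans meet $h$ trivially precisely when each new vector avoids $h + \langle v_1,\dots,v_j\rangle$, and that this sum has the asserted dimension $M+j$. The cleanest way to secure this is to observe that $\langle v_1,\dots,v_{j+1}\rangle \cap h = 0$ forces $\langle v_1,\dots,v_j\rangle \cap h = 0$, so one may add the constraint at each step without loss, and that under the inductive hypothesis $\dim(h + \langle v_1,\dots,v_j\rangle) = M + j$ exactly because the two summands intersect trivially. Since all of this is routine and the result is stated as well known, one could alternatively simply cite \cite{quantumMatroid} for the identity and omit the calculation.
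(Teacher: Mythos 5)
Your proof is correct, but note that the paper itself supplies no argument for this lemma at all: it simply declares Lemma \ref{lem:subSize} (together with the preceding lemma) to be well known and points to \cite{quantumMatroid}, which is precisely the fallback you offer in your closing sentence. Your direct enumeration is the standard self-contained proof and it is sound: you count ordered tuples $(v_1,\dots,v_i)$ whose partial spans meet $h$ trivially, and the key equivalence --- that admissibility of $v_{j+1}$ means exactly avoiding the subspace $h+\langle v_1,\dots,v_j\rangle$, which has dimension $M+j$ because the inductive hypothesis makes the sum direct --- is exactly the right pivot; note also that avoiding this subspace automatically enforces linear independence, since $\langle v_1,\dots,v_j\rangle \subseteq h+\langle v_1,\dots,v_j\rangle$, so no separate independence condition is needed. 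The count $\prod_{j=0}^{i-1}\bigl(q^{N+M}-q^{M+j}\bigr)$ then divides by the $\prod_{j=0}^{i-1}\bigl(q^{i}-q^{j}\bigr)$ ordered bases of a fixed $i$-space (every ordered basis of an admissible $y$ is counted, since partial spans of such a basis lie in $y$ and hence meet $h$ trivially), and the factorization $q^{N+M}-q^{M+j}=q^{M}\bigl(q^{N}-q^{j}\bigr)$ cleanly separates the factor $q^{Mi}$ from the Gaussian coefficient $\binom{N}{i}_q$. What your approach buys is a complete, elementary verification that keeps the paper self-contained; what the paper's citation buys is economy, since the formula is a textbook fact about attenuated spaces. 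Either is acceptable, and your proof fills in exactly what the citation leaves implicit.
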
 
 
 \section{The matrix $A^*$ }
 We continue to discuss the Attenuated Space poset $\mathcal A_q(N,M)$. 
 We are going to define two matrices in ${\rm Mat}_X(\mathbb C)$, denoted by $A$ and $A^*$.
 We will discuss $A^*$ in this section. We will discuss $A$ in  Section 8.
 \medskip
 
\begin{definition} \label{def:Es} \rm For $0 \leq i \leq N$ define a diagonal matrix $E^*_i \in {\rm Mat}_X(\mathbb C)$ that has $(y,y)$-entry
 \begin{align*}
(E^*_i)_{y,y} = \begin{cases} 1, &{\mbox{\rm if ${\rm dim}\,y = i$}}; \\
                                         0, & {\mbox{\rm if ${\rm dim}\,y \not=i$}}
                \end{cases}  \qquad \qquad (y \in X).
 \end{align*}
 \end{definition}
 
\noindent
We have some comments about Definition \ref{def:Es}.
\begin{lemma} \label{def:Es1} For $0 \leq i \leq N$ and $y \in X$,

 \begin{align*}
E^*_i \hat y= \begin{cases} \hat y, &{\mbox{\rm if ${\rm dim}\,y = i$}}; \\
                                         0, & {\mbox{\rm if ${\rm dim}\,y \not=i$}}.
                \end{cases}
 \end{align*}
 \end{lemma}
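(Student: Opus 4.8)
The plan is to compute $E^*_i \hat y$ directly from the definitions, since $E^*_i$ is diagonal and $\hat y$ is a standard basis vector, so essentially no machinery is required. First I would recall the general principle that for any matrix $B \in {\rm Mat}_X(\mathbb C)$ and any $y \in X$, the vector $B \hat y$ is the $y$-th column of $B$; concretely, for each $z \in X$ the $z$-coordinate of $B \hat y$ is $\sum_{w \in X} B_{z,w} (\hat y)_w = B_{z,y}$, because $\hat y$ has $y$-coordinate $1$ and all other coordinates $0$.

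Applying this with $B = E^*_i$, the $z$-coordinate of $E^*_i \hat y$ equals $(E^*_i)_{z,y}$. Since $E^*_i$ is diagonal by Definition \ref{def:Es}, this entry vanishes whenever $z \neq y$, and hence $E^*_i \hat y = (E^*_i)_{y,y}\, \hat y$. In other words, $\hat y$ is an eigenvector of the diagonal matrix $E^*_i$ with eigenvalue equal to its $(y,y)$-entry.

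Finally I would substitute the value of $(E^*_i)_{y,y}$ supplied by Definition \ref{def:Es}: it equals $1$ when $\dim y = i$ and $0$ when $\dim y \neq i$. This gives $E^*_i \hat y = 1 \cdot \hat y = \hat y$ in the first case and $E^*_i \hat y = 0 \cdot \hat y = 0$ in the second, which is precisely the asserted formula. There is no genuine obstacle here; the only point requiring a little care is keeping the row/column indexing conventions straight so that $B \hat y$ selects the correct column of $B$, after which the result is immediate.
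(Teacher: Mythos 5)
Your proof is correct and follows the same route as the paper, whose entire proof is ``By Definition \ref{def:Es}''; you have simply spelled out the elementary column-extraction computation that this citation leaves implicit. No issues.
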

 \begin{proof} By Definition \ref{def:Es}.
 \end{proof}

\noindent Recall the standard module $V$.

\begin{lemma} \label{lem:c1}
 For $0 \leq i \leq N$ we have
 \begin{align*}
 E^*_iV = {\rm Span} \lbrace \hat y \vert y \in X, \;\; {\rm dim}\, y=i\rbrace.
 \end{align*}
 Moreover, 
 \begin{align*}
 V=\sum_{i=0}^N E^*_iV \qquad  \mbox{\rm (orthogonal direct sum)}.
 \end{align*}
 \end{lemma}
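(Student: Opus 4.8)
The statement to prove is Lemma~\ref{lem:c1}. The plan is to verify the two claims in sequence, relying on Lemma~\ref{def:Es1} and the orthonormality of the basis $\lbrace \hat y \rbrace_{y\in X}$.

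First I would establish the description of the eigenspace $E^*_i V$. Fix $i$ with $0 \leq i \leq N$. By Lemma~\ref{def:Es1}, the matrix $E^*_i$ sends each basis vector $\hat y$ with ${\rm dim}\,y = i$ to itself, and sends each basis vector $\hat y$ with ${\rm dim}\,y \neq i$ to $0$. Since $\lbrace \hat y \rbrace_{y\in X}$ is a basis for $V$, an arbitrary vector $v \in V$ can be written $v = \sum_{y\in X} c_y \hat y$ with $c_y \in \mathbb C$, and by linearity $E^*_i v = \sum_{y : {\rm dim}\,y = i} c_y \hat y$. Thus the image $E^*_i V$ is precisely the span of those $\hat y$ with ${\rm dim}\,y = i$, giving the first displayed equation. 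This also shows $E^*_i$ is the orthogonal projection onto that span.

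Next I would prove the orthogonal direct sum decomposition. From Definition~\ref{def:Es}, summing the diagonal entries gives $\sum_{i=0}^N E^*_i = I$, because each $y\in X$ has a unique dimension ${\rm dim}\,y \in \lbrace 0,1,\dots,N\rbrace$, so exactly one summand contributes $1$ in the $(y,y)$-entry and the rest contribute $0$. Applying this to any $v\in V$ yields $v = \sum_{i=0}^N E^*_i v$, so $V = \sum_{i=0}^N E^*_i V$. To see the sum is direct and orthogonal, note that the subspaces $E^*_i V$ are spanned by disjoint subsets of the orthonormal basis $\lbrace \hat y\rbrace_{y\in X}$: if ${\rm dim}\,y = i$ and ${\rm dim}\,z = j$ with $i\neq j$, then $\langle \hat y, \hat z\rangle = 0$ since $y\neq z$. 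Hence vectors drawn from distinct $E^*_i V$ are mutually orthogonal, which forces the sum to be both direct and orthogonal.

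There is no real obstacle here; the result is essentially a repackaging of Lemma~\ref{def:Es1} together with the fact that $\lbrace \hat y \rbrace_{y\in X}$ is an orthonormal basis partitioned by dimension. The only point requiring a moment of care is confirming that the dimension function ${\rm dim}: X \to \lbrace 0,1,\dots,N\rbrace$ induces a genuine partition of the index set $X$, so that the projections $E^*_i$ are complementary and the basis splits cleanly; this is immediate since every subspace $y\in X$ has a well-defined dimension lying in the stated range, as recorded in Section~3.
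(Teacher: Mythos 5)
Your proposal is correct and follows essentially the same route as the paper, whose proof is simply the one-line citation of Lemma~\ref{def:Es1} together with the fact that $\lbrace \hat y \rbrace_{y \in X}$ is an orthonormal basis for $V$; you have merely expanded that citation into its explicit details (the action of $E^*_i$ on basis vectors, the identity $\sum_{i=0}^N E^*_i = I$ coming from the partition of $X$ by dimension, and orthogonality of spans of disjoint subsets of an orthonormal basis). No gaps; nothing further is needed.
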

 \begin{proof} By Lemma \ref{def:Es1} and since $\lbrace {\hat y} \rbrace_{y \in X}$ is an orthonormal basis for $V$.
 \end{proof}
 
 \begin{lemma} We have 
 \begin{align*}
 {\rm dim}\, E^*_iV = q^{Mi} \binom{N}{i}_q \qquad \qquad (0 \leq i \leq N).
 \end{align*}
 \end{lemma}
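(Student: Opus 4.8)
The plan is to read off the dimension directly from the spanning description in Lemma \ref{lem:c1}, using the fact that we already know how many vertices of each rank there are. The key observation is that the spanning set exhibited for $E^*_iV$ is automatically a basis, because it is a subset of an orthonormal (hence linearly independent) set.

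First I would invoke Lemma \ref{lem:c1} to write
\begin{align*}
E^*_iV = {\rm Span} \lbrace \hat y \mid y \in X, \;\; {\rm dim}\, y = i \rbrace.
\end{align*}
Next I would recall from Section 2 that $\lbrace \hat y \rbrace_{y \in X}$ is an orthonormal basis for $V$. Any subset of an orthonormal set is linearly independent, so the vectors $\lbrace \hat y \mid y \in X, \;\; {\rm dim}\, y = i \rbrace$ are linearly independent; being also a spanning set for $E^*_iV$, they form a basis of $E^*_iV$. Consequently ${\rm dim}\, E^*_iV$ equals the number of these basis vectors, i.e.\ the number of $y \in X$ with ${\rm dim}\, y = i$.

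Finally I would apply Lemma \ref{lem:subSize}, which states that the number of vertices in $X$ of dimension $i$ is $q^{Mi}\binom{N}{i}_q$, to conclude
\begin{align*}
{\rm dim}\, E^*_iV = q^{Mi}\binom{N}{i}_q \qquad (0 \leq i \leq N).
\end{align*}

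I do not anticipate any genuine obstacle here: the statement is a direct bookkeeping consequence of the two cited lemmas, and the only point requiring (minimal) care is the remark that the exhibited spanning set is linearly independent, which is immediate from orthonormality. The entire argument is essentially a one-line combination of Lemma \ref{lem:c1} and Lemma \ref{lem:subSize}.
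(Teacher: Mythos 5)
Your proposal is correct and follows the same route as the paper, which also proves this lemma by combining Lemma \ref{lem:subSize} with the first assertion of Lemma \ref{lem:c1}. The only difference is that you spell out the (implicit) observation that the spanning set $\lbrace \hat y \mid y \in X,\; {\rm dim}\,y = i\rbrace$ is linearly independent because it is a subset of the orthonormal basis $\lbrace \hat y \rbrace_{y\in X}$, a detail the paper leaves to the reader.
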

 \begin{proof} By Lemma \ref{lem:subSize} and the first assertion in Lemma \ref{lem:c1}.
 \end{proof}

 \begin{definition}\rm For $0 \leq i \leq N$, we call $E^*_iV$ the {\it $i$th subconstituent} of $\mathcal A_q(N,M)$.
 \end{definition}

 \begin{lemma} \label{lem:basicEs} We have  $E^*_i E^*_j = \delta_{i,j} E^*_i$ for $0 \leq i,j\leq N$. Moreover,
 $I = \sum_{i=0}^N E^*_i$.
\end{lemma}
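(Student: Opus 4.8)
The plan is to reduce both assertions to the action of the relevant matrices on the orthonormal basis $\lbrace \hat y \rbrace_{y \in X}$ of $V$, using the description of this action supplied by Lemma \ref{def:Es1}. Since each $E^*_i$ is diagonal and $\lbrace \hat y \rbrace_{y \in X}$ is a basis for $V$, an identity of matrices in ${\rm Mat}_X(\mathbb C)$ holds precisely when both sides agree on every $\hat y$, so it suffices to track the action on each such basis vector.

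For the first assertion, I would fix $y \in X$ and compute $E^*_i E^*_j \hat y$ by applying Lemma \ref{def:Es1} twice. The key observation is that each $y$ has a single well-defined dimension, so $E^*_j \hat y$ is nonzero only when $\dim y = j$, in which case it equals $\hat y$; applying $E^*_i$ then returns $\hat y$ exactly when $\dim y = i$, that is, exactly when $i = j$. Comparing with $\delta_{i,j} E^*_i \hat y$ in the two cases $\dim y = j$ and $\dim y \neq j$ shows that the two sides agree for every $y$, whence $E^*_i E^*_j = \delta_{i,j} E^*_i$.

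For the second assertion, I would evaluate $\sum_{i=0}^N E^*_i$ on a fixed $\hat y$. Since $0 \leq \dim y \leq N$, exactly one index $i$ in the range satisfies $\dim y = i$, so by Lemma \ref{def:Es1} exactly one summand contributes $\hat y$ and the rest vanish; hence $\sum_{i=0}^N E^*_i \hat y = \hat y = I \hat y$, and the identity $I = \sum_{i=0}^N E^*_i$ follows.

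There is no genuine obstacle here: the statement is a routine consequence of the definitions, and the only point requiring care is the bookkeeping in the case analysis. Alternatively, one could bypass the basis computation entirely by appealing to Lemma \ref{lem:c1}, which exhibits $V$ as the orthogonal direct sum of the subspaces $E^*_i V$; the matrices $E^*_i$ are then recognized as the orthogonal projections onto these summands, and the stated relations are exactly the standard orthogonality and completeness properties enjoyed by such a family of projections.
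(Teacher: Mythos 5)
Your proof is correct and is essentially the paper's argument: the paper simply says ``By Definition \ref{def:Es},'' meaning exactly the entrywise (equivalently, basis-vector) verification you carry out, using the fact that each vertex has a unique dimension in $\lbrace 0,\ldots,N\rbrace$. Your closing remark about viewing the $E^*_i$ as orthogonal projections is a fine alternative gloss, but the core computation matches the paper's intended one-line proof.
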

\begin{proof} By Definition  \ref{def:Es}.
\end{proof}

\noindent  By Lemma \ref{lem:basicEs}, the matrices $\lbrace E^*_i \rbrace_{i=0}^N$ form a basis for 
 a commutative subalgebra of ${\rm Mat}_X(\mathbb C)$; we denote this subalgebra by $\bf M^*$.
\medskip

 \begin{definition} \label{def:As} \rm Define a diagonal matrix $A^* \in {\rm Mat}_X(\mathbb C)$ that has $(y,y)$-entry $q^{-{\rm dim} \,y}$ for $y \in X$. 
 \end{definition}
 \noindent Note that $A^*$ is invertible. 
 \begin{lemma} \label{lem:Ky} For $y \in X$,
\begin{align*}
A^* \hat y = q^{-{\rm dim}\,y} \hat y.
\end{align*}
\end{lemma}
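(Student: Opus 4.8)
The plan is to unwind the two relevant definitions and compute the coordinates of $A^*\hat y$ directly. By Definition \ref{def:As}, the matrix $A^*$ is diagonal with $(y,y)$-entry $q^{-{\rm dim}\,y}$, and by construction the vector $\hat y$ has $y$-coordinate $1$ and all other coordinates $0$. Since ${\rm Mat}_X(\mathbb C)$ acts on $V$ by left multiplication, I would fix an arbitrary $z \in X$ and evaluate the $z$-coordinate of $A^*\hat y$.

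Concretely, the first step is to write $(A^*\hat y)_z = \sum_{w \in X} (A^*)_{z,w}\,(\hat y)_w$. Because $\hat y$ is supported only at the coordinate $y$, the sum collapses to $(A^*)_{z,y}$. The second step is to use that $A^*$ is diagonal, so $(A^*)_{z,y} = q^{-{\rm dim}\,y}\,\delta_{z,y}$; equivalently, this equals $q^{-{\rm dim}\,y}$ when $z=y$ and $0$ otherwise. Comparing this, as $z$ ranges over $X$, with the coordinates of $q^{-{\rm dim}\,y}\hat y$ gives the claimed identity $A^*\hat y = q^{-{\rm dim}\,y}\hat y$.

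There is essentially no obstacle here: the statement is just the observation that a diagonal matrix sends each standard basis vector to itself scaled by the corresponding diagonal entry, and the entire argument is a two-line unpacking of Definition \ref{def:As} together with the definition of $\hat y$. If one prefers a more structural phrasing, one may instead invoke Lemma \ref{def:Es1} to write $A^* = \sum_{i=0}^N q^{-i} E^*_i$ and apply this to $\hat y$, but the direct coordinatewise computation is the cleanest route.
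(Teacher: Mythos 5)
Your proof is correct and matches the paper's own argument, which is simply ``By Definition \ref{def:As}'': your coordinatewise computation is just that one-line citation spelled out in full. (Minor note: the structural alternative you mention, $A^* = \sum_{i=0}^N q^{-i}E^*_i$, is Lemma \ref{lem:AsE} rather than Lemma \ref{def:Es1}, but this does not affect your main argument.)
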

\begin{proof} By Definition  \ref{def:As}.
\end{proof}

\noindent By construction, the eigenvalues of $A^*$ are $\lbrace q^{-i}\rbrace_{i=0}^N$.
\begin{lemma} \label{lem:AsES}
 For $0 \leq i \leq N$, 
 $E^*_iV$ is the eigenspace of $A^*$ for the eigenvalue $q^{-i}$.
\end{lemma}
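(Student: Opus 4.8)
The plan is to prove the statement of Lemma~\ref{lem:AsES} directly from the preceding lemmas, since everything needed has already been assembled. The claim is that for $0 \leq i \leq N$, the subconstituent $E^*_iV$ is precisely the eigenspace of $A^*$ associated with the eigenvalue $q^{-i}$. First I would fix $i$ with $0 \leq i \leq N$ and recall from Lemma~\ref{lem:c1} that $E^*_iV = {\rm Span}\lbrace \hat y \mid y \in X,\ {\rm dim}\,y = i\rbrace$. The idea is to show that $A^*$ acts as the scalar $q^{-i}$ on this subspace, and that no vector outside this subspace can be an eigenvector for $q^{-i}$.

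For the first inclusion, I would take a spanning vector $\hat y$ with ${\rm dim}\,y = i$ and apply Lemma~\ref{lem:Ky}, which gives $A^*\hat y = q^{-{\rm dim}\,y}\hat y = q^{-i}\hat y$. Since $A^*$ is linear and $E^*_iV$ is spanned by such vectors, it follows that $A^*$ restricts to multiplication by $q^{-i}$ on $E^*_iV$; hence $E^*_iV$ is contained in the eigenspace of $A^*$ for $q^{-i}$.

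For the reverse inclusion, I would use the orthogonal direct sum decomposition $V = \sum_{j=0}^N E^*_jV$ from Lemma~\ref{lem:c1}, together with the fact (noted just before the lemma) that the eigenvalues $\lbrace q^{-j}\rbrace_{j=0}^N$ are mutually distinct, as follows from $q$ being an integer at least $2$. Writing an arbitrary eigenvector $v$ of $A^*$ for $q^{-i}$ as $v = \sum_{j=0}^N v_j$ with $v_j \in E^*_jV$, the first part shows $A^*v = \sum_{j=0}^N q^{-j}v_j$, and comparing this with $A^*v = q^{-i}v = \sum_{j=0}^N q^{-i}v_j$ forces $(q^{-j}-q^{-i})v_j = 0$ for all $j$. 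Distinctness of the eigenvalues then gives $v_j = 0$ for $j \neq i$, so $v = v_i \in E^*_iV$. This establishes the reverse inclusion and completes the proof.

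I do not anticipate a genuine obstacle here: the statement is essentially a restatement of Lemma~\ref{lem:Ky} combined with the spanning description in Lemma~\ref{lem:c1}, and the only point requiring any care is the reverse inclusion, where one must invoke the distinctness of the eigenvalues $q^{-j}$ to rule out eigenvectors from other subconstituents. Given that the whole structure has already been set up through Lemmas~\ref{lem:c1}, \ref{lem:basicEs}, and~\ref{lem:Ky}, the argument should be short and routine.
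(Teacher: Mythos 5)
Your proof is correct and follows essentially the same route as the paper, whose entire proof is the citation ``By Lemmas \ref{lem:c1}, \ref{lem:Ky}''; you have simply unpacked that citation, using Lemma \ref{lem:Ky} for the containment of $E^*_iV$ in the eigenspace and the orthogonal direct sum in Lemma \ref{lem:c1} together with the distinctness of $\lbrace q^{-j}\rbrace_{j=0}^N$ for the reverse inclusion. No gaps.
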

\begin{proof}
By Lemmas  \ref{lem:c1}, \ref{lem:Ky}.
\end{proof}

\begin{lemma} \label{lem:AsE}
 We have
 \begin{align*}
 A^* = \sum_{i=0}^N q^{-i} E^*_i.
 \end{align*}
Moreover, the subalgebra $\bf M^*$ is generated by $A^*$.
\end{lemma}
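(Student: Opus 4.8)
The plan is to prove the final statement (Lemma \ref{lem:AsE}) in two parts. The first assertion, that $A^* = \sum_{i=0}^N q^{-i} E^*_i$, follows by comparing the two sides as diagonal matrices. By Definition \ref{def:As}, the $(y,y)$-entry of $A^*$ is $q^{-{\rm dim}\,y}$, and all off-diagonal entries vanish. On the right-hand side, the matrices $\lbrace E^*_i \rbrace_{i=0}^N$ are diagonal by Definition \ref{def:Es}, so $\sum_{i=0}^N q^{-i}E^*_i$ is diagonal as well. Fix $y \in X$ and set $j = {\rm dim}\,y$. By the defining property of $E^*_i$ in Definition \ref{def:Es}, the $(y,y)$-entry of $E^*_i$ is $1$ when $i = j$ and $0$ otherwise, so the $(y,y)$-entry of the sum is exactly $q^{-j} = q^{-{\rm dim}\,y}$, matching $A^*$. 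Since the two diagonal matrices agree in every entry, they are equal. Alternatively, and more in keeping with the paper's style, I would apply both matrices to an arbitrary basis vector $\hat y$ and invoke Lemmas \ref{lem:Ky} and \ref{def:Es1}: the left side gives $q^{-{\rm dim}\,y}\hat y$, while the right side gives $\sum_{i=0}^N q^{-i} E^*_i \hat y = q^{-{\rm dim}\,y}\hat y$, since only the term with $i = {\rm dim}\,y$ survives. Because the $\lbrace \hat y \rbrace_{y\in X}$ span $V$, the two matrices coincide.

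For the second assertion, that $\mathbf{M}^*$ is generated by $A^*$, I would argue that each spectral idempotent $E^*_i$ is a polynomial in $A^*$, whence every element of $\mathbf{M}^* = {\rm Span}\lbrace E^*_i \rbrace_{i=0}^N$ lies in the subalgebra generated by $A^*$. The clean way to produce these polynomials is via Lagrange interpolation: since $A^*$ has $N+1$ distinct eigenvalues $\lbrace q^{-i}\rbrace_{i=0}^N$ (the values $q^{-i}$ are pairwise distinct because $q>1$), for each $i$ the polynomial
\begin{align*}
p_i(\lambda) = \prod_{\substack{0\leq j\leq N \\ j\neq i}} \frac{\lambda - q^{-j}}{q^{-i}-q^{-j}}
\end{align*}
satisfies $p_i(q^{-j}) = \delta_{i,j}$. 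Evaluating at $A^*$ using its spectral decomposition from the first assertion gives $p_i(A^*) = \sum_{j=0}^N p_i(q^{-j}) E^*_j = E^*_i$, where I have used $E^*_j E^*_k = \delta_{j,k}E^*_j$ from Lemma \ref{lem:basicEs} to evaluate powers of $A^*$. Thus each $E^*_i$ is a polynomial in $A^*$, and so $A^*$ generates $\mathbf{M}^*$; the reverse containment $\langle A^*\rangle \subseteq \mathbf{M}^*$ is immediate since $A^*$ itself is the combination $\sum_i q^{-i}E^*_i$ of the spanning idempotents.

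I do not anticipate a genuine obstacle here, as both assertions are routine consequences of the orthogonal idempotent decomposition already established. The only point requiring mild care is ensuring the eigenvalues $q^{-i}$ are distinct so that the interpolation denominators do not vanish; this is where the hypothesis that $q$ is a prime power with $q>1$ (indeed $q\geq 2$) is silently used, guaranteeing $q^{-i}\neq q^{-j}$ for $i\neq j$. Given the paper's terse proof style in this section, I expect the intended proof is simply to cite Definition \ref{def:As}, Lemma \ref{lem:basicEs}, and the distinctness of the eigenvalues, with the Lagrange interpolation argument left implicit as the standard justification that the primitive idempotents of a diagonalizable matrix with distinct eigenvalues are polynomials in that matrix.
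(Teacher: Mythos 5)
Your proposal is correct and matches the paper's proof, which simply cites Definitions \ref{def:Es}, \ref{def:As} for the first assertion and the mutual distinctness of $\lbrace q^{-i}\rbrace_{i=0}^N$ for the second; your Lagrange interpolation argument is exactly the standard justification the paper leaves implicit.
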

\begin{proof} The first assertion follows from Definitions \ref{def:Es}, \ref{def:As}. 
 The second assertion holds because
$\lbrace q^{-i} \rbrace_{i=0}^N$ are mutually distinct.
\end{proof}

\begin{remark}\rm The matrix $A^*$ is a variation on the matrix $K$ that appears in \cite[Section~2]{WenLiu}.  By construction $K = q^{N+M} A^*$.
\end{remark} 

\begin{definition}\label{def:Smat} \rm Define a matrix
\begin{align} \label{eq:Sdef}
S = \sum_{i=0}^N (-1)^i E^*_i.
\end{align}
\end{definition}
\begin{lemma} \label{lem:Stwo} We have 
\begin{enumerate}
\item[\rm (i)] $S^2 = I$;
\item[\rm (ii)]  $S A^* = A^* S$.
\end{enumerate}
\end{lemma}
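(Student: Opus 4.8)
The plan is to prove both parts by direct computation in the commutative subalgebra $\mathbf M^*$, using only the primitive-idempotent relations of Lemma \ref{lem:basicEs} and the spectral expansion of $A^*$ from Lemma \ref{lem:AsE}. The key observation is that $S$, like $A^*$, is an element of $\mathbf M^*$, being a linear combination of the $\lbrace E^*_i\rbrace_{i=0}^N$; once this is noted, both statements reduce to expanding products and collapsing them with the orthogonality relation $E^*_i E^*_j = \delta_{i,j} E^*_i$.

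For part (i), I would write
\begin{align*}
S^2 = \Bigl(\sum_{i=0}^N (-1)^i E^*_i\Bigr)\Bigl(\sum_{j=0}^N (-1)^j E^*_j\Bigr)
= \sum_{i=0}^N \sum_{j=0}^N (-1)^{i+j} E^*_i E^*_j
= \sum_{i=0}^N (-1)^{2i} E^*_i
= \sum_{i=0}^N E^*_i = I,
\end{align*}
where the third equality applies $E^*_i E^*_j = \delta_{i,j} E^*_i$ and the last equality is the resolution of the identity $I = \sum_{i=0}^N E^*_i$ from Lemma \ref{lem:basicEs}.

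For part (ii), I would either invoke the fact that $S$ and $A^*$ both lie in the commutative algebra $\mathbf M^*$, so they automatically commute, or, to be fully self-contained, expand each product directly. Using Lemma \ref{lem:AsE} and the orthogonality relation,
\begin{align*}
S A^* = \Bigl(\sum_{i=0}^N (-1)^i E^*_i\Bigr)\Bigl(\sum_{j=0}^N q^{-j} E^*_j\Bigr)
= \sum_{i=0}^N (-1)^i q^{-i} E^*_i,
\end{align*}
and the same computation with the factors in the opposite order yields $A^* S = \sum_{i=0}^N (-1)^i q^{-i} E^*_i$, so the two agree.

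I do not anticipate any real obstacle here; the entire argument is a routine manipulation of mutually orthogonal idempotents. The only point requiring minor care is the sign bookkeeping in part (i), namely that the surviving diagonal terms carry the factor $(-1)^{2i}=1$ rather than an alternating sign, which is precisely what forces $S^2$ to collapse to $I$ rather than to $S$ or to a more complicated element.
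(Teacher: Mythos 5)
Your proposal is correct and matches the paper's own proof, which likewise establishes both parts by combining Definition \ref{def:Smat} with the idempotent relations of Lemma \ref{lem:basicEs} and the spectral expansion of $A^*$ in Lemma \ref{lem:AsE}; you have merely written out explicitly the routine computation that the paper leaves implicit.
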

\begin{proof} By  Lemmas \ref{lem:basicEs}, \ref{lem:AsE} and Definition \ref{def:Smat}.
\end{proof}
 
 \section{Some raising and lowering matrices}
\noindent  
 We continue to discuss the Attenuated Space poset $\mathcal A_q(N,M)$. 
  We are going to define a raising matrix $R$ and two lowering matrices $L, L'$. We comment on how $L, L'$ differ.
  Roughly speaking,
  $L$ is more natural from an algebraic point of view, and $L'$ is more natural from a combinatorial point of view.
 In this section we will describe $R$ and $L'$. We will describe $L$ in Section 7.
 
 \begin{definition}\label{def:RL} \rm We define matrices $R, L'$ in ${\rm Mat}_X(\mathbb C)$ that have $(y,z)$-entries
\begin{align*}
R_{y,z} = \begin{cases} 1, &{\mbox{\rm if $y$  covers $z$}}; \\
                                         0, & {\mbox{\rm if $y$ does not cover $z$}}
                \end{cases} 
\qquad   \quad               
L'_{y,z} = \begin{cases} 1, &{\mbox{\rm if $z$  covers $y$}}; \\
                                         0, & {\mbox{\rm if $z$ does not cover $y$}}
                \end{cases}                
\end{align*}
for $y,z \in X$. Note that $L'=R^t$, where $t$ denotes transpose. We call $R$ (resp. $L'$) the {\it raising matrix} (resp. {\it lowering matrix}) of $\mathcal A_q(N,M)$.
\end{definition}

\begin{lemma} \label{lem:RLy} For $z \in X$ we have
\begin{align*}
R \hat z = \sum_{y \;{\rm covers}\; z} \hat y, \qquad \qquad L' \hat z=  \sum_{z \;{\rm covers}\; y} \hat y.
\end{align*}
\end{lemma}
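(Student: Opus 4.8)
The statement to prove is Lemma~\ref{lem:RLy}, which describes the action of the raising matrix $R$ and the lowering matrix $L'$ on a standard basis vector $\hat z$. This is a direct computational unpacking of Definition~\ref{def:RL} together with the standard correspondence between matrix-vector multiplication and the columns of a matrix.

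The plan is to compute each coordinate of the vectors $R\hat z$ and $L'\hat z$ directly from the matrix entries. First I would recall that for any matrix $B \in {\rm Mat}_X(\mathbb C)$ and any $z \in X$, the product $B\hat z$ is the $z$th column of $B$; concretely, the $y$-coordinate of $B\hat z$ equals $B_{y,z}$, so that $B\hat z = \sum_{y \in X} B_{y,z}\,\hat y$. Applying this with $B = R$, Definition~\ref{def:RL} tells us that $R_{y,z} = 1$ precisely when $y$ covers $z$ and $R_{y,z}=0$ otherwise; hence the sum $\sum_{y\in X} R_{y,z}\hat y$ collapses to $\sum_{y \text{ covers } z} \hat y$, which is the first claimed identity. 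The second identity follows in the same way: with $B = L'$ we have $L'_{y,z}=1$ exactly when $z$ covers $y$, so $L'\hat z = \sum_{y\in X} L'_{y,z}\hat y = \sum_{z \text{ covers } y}\hat y$.

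I do not anticipate any genuine obstacle here, since the result is essentially the definition of $R$ and $L'$ read off column by column. The only point requiring minor care is the bookkeeping of which index is summed over and which is fixed: the sum runs over the row index $y$ with the column index $z$ held fixed, and one must correctly match the covering condition ($y$ covers $z$ for $R$, versus $z$ covers $y$ for $L'$) to the corresponding nonzero entries. Since $L' = R^t$ by Definition~\ref{def:RL}, the second identity could alternatively be obtained by transposing, but computing both directly from the entry formulas is cleaner and avoids any confusion about how transposition interacts with the covering relation.

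In summary, the proof is a one-line invocation of the standard fact that $B\hat z$ extracts the $z$th column of $B$, specialized to the entry patterns prescribed in Definition~\ref{def:RL}. Thus the lemma follows immediately from Definition~\ref{def:RL}.
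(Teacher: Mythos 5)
Your proof is correct and takes the same approach as the paper, whose entire proof is ``By Definition~\ref{def:RL}''; you have simply spelled out the standard fact that $B\hat z$ is the $z$th column of $B$ and read off the entries. No issues.
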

\begin{proof} By Definition \ref{def:RL}.
\end{proof}
\noindent Next, we describe how $R, L'$ are related to $\lbrace E^*_i \rbrace_{i=0}^N$.
\begin{lemma} \label{lem:Rexpand} We have
\begin{align*}
&R E^*_i = E^*_{i+1} R \quad (0 \leq i \leq N-1), \qquad \quad  E^*_0 R=0, \qquad \quad R E^*_N=0;
\\
&L' E^*_i = E^*_{i-1} L' \quad (1 \leq i \leq N), \qquad \quad  L' E^*_0 =0, \qquad \quad E^*_N L'=0.
\end{align*}
\end{lemma}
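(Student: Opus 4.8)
The plan is to prove Lemma \ref{lem:Rexpand} directly from the combinatorial meaning of $R$, $L'$, and $E^*_i$, by tracking how these matrices act on the basis vectors $\lbrace \hat z \rbrace_{z \in X}$. The key structural fact is that $R$ raises dimension by exactly one and $L'$ lowers it by exactly one, which is immediate from Definition \ref{def:RL}: $R$ has a nonzero $(y,z)$-entry only when $y$ covers $z$, i.e. only when ${\rm dim}\,y = {\rm dim}\,z + 1$, and dually for $L'$. The matrices $E^*_i$ are the projections onto the dimension-$i$ subconstituents by Lemma \ref{lem:c1}. So each identity should reduce to a bookkeeping statement about which dimensions survive after composing a dimension-selecting projector with a dimension-shifting map.

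Concretely, I would verify the relation $R E^*_i = E^*_{i+1} R$ by evaluating both sides on an arbitrary basis vector $\hat z$ with $z \in X$. First I would use Lemma \ref{def:Es1} to see that $E^*_i \hat z = \hat z$ if ${\rm dim}\,z = i$ and $E^*_i \hat z = 0$ otherwise, so the left side is $R \hat z$ when ${\rm dim}\,z = i$ and $0$ otherwise. For the right side, I would apply Lemma \ref{lem:RLy} to write $R \hat z = \sum_{y \;{\rm covers}\; z} \hat y$; since every $y$ appearing in this sum satisfies ${\rm dim}\,y = {\rm dim}\,z + 1$, applying $E^*_{i+1}$ fixes every term exactly when ${\rm dim}\,z = i$ and annihilates everything otherwise. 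Thus both sides agree on every $\hat z$, giving the matrix identity. The same argument, with the roles of raising and lowering interchanged and Lemma \ref{lem:RLy} supplying $L' \hat z = \sum_{z \;{\rm covers}\; y} \hat y$ (a sum over $y$ with ${\rm dim}\,y = {\rm dim}\,z - 1$), establishes $L' E^*_i = E^*_{i-1} L'$.

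The four boundary relations are degenerate instances of the same dimension count. For $E^*_0 R = 0$, I would note that the image of $R$ lies in $\sum_{i \geq 1} E^*_i V$ since $R$ strictly raises dimension, so projecting back onto the dimension-$0$ subconstituent kills everything; equivalently $E^*_0 R \hat z$ is a sum of $\hat y$ with ${\rm dim}\,y = {\rm dim}\,z + 1 \geq 1$, each annihilated by $E^*_0$. The relation $R E^*_N = 0$ holds because no vertex of dimension $N+1$ exists (the poset has rank $N$), so a top-dimensional $z$ is covered by nothing and $R \hat z = 0$. The relations $L' E^*_0 = 0$ and $E^*_N L' = 0$ follow symmetrically: a dimension-$0$ vertex covers nothing, and $L'$ strictly lowers dimension so its image misses the top subconstituent.

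There is no real obstacle here; the entire proof is a direct unpacking of Definition \ref{def:RL}, Lemma \ref{def:Es1}, and Lemma \ref{lem:RLy}, together with the single observation that covering relations shift dimension by exactly one. The only point requiring a modicum of care is keeping the index bookkeeping consistent at the boundaries---ensuring, for instance, that the rank bound (no vertices of dimension $-1$ or $N+1$) is invoked to justify the vanishing relations rather than the generic shift relations.
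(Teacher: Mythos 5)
Your proposal is correct and follows essentially the same route as the paper's proof: the paper also verifies each identity by applying both sides to a basis vector $\hat z$ and evaluating via Lemmas \ref{def:Es1} and \ref{lem:RLy}, with the dimension-shift and rank-bound observations you make explicit being exactly what that evaluation amounts to. Your write-up simply spells out the bookkeeping that the paper leaves implicit.
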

\begin{proof} To verify these equations, for $z \in X$ apply each side to $\hat z$, and evaluate the
results using Lemmas  \ref{def:Es1},  \ref{lem:RLy}.
\end{proof} 

\begin{lemma} \label{lem:RLaction} We have
  \begin{align*}
  & R E^*_i V \subseteq E^*_{i+1} V \qquad \quad (0 \leq i \leq N-1), \qquad \quad R E^*_NV=0;\\
& L' E^*_i V \subseteq E^*_{i-1}V \qquad (1 \leq i \leq N), \qquad \quad L' E^*_0V =0.
 \end{align*}
\end{lemma}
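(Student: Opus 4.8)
The plan is to deduce each of the four assertions directly from the commutation and vanishing relations established in Lemma \ref{lem:Rexpand}, using only the additional elementary fact that a projection acts as the identity on its own image. Recall from Lemmas \ref{lem:c1} and \ref{lem:basicEs} that $E^*_i$ is idempotent and that $E^*_i V$ is its image; consequently $E^*_i v = v$ for every $v \in E^*_i V$.

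First I would fix $i$ with $0 \leq i \leq N-1$ and take an arbitrary $v \in E^*_i V$, so that $E^*_i v = v$. Applying the relation $R E^*_i = E^*_{i+1} R$ from Lemma \ref{lem:Rexpand}, I compute $R v = R E^*_i v = E^*_{i+1}(R v)$, which exhibits $R v$ as an element of the image of $E^*_{i+1}$, namely $E^*_{i+1} V$. Since $v$ was arbitrary this yields $R E^*_i V \subseteq E^*_{i+1} V$. The boundary statement is even more immediate: the relation $R E^*_N = 0$ from Lemma \ref{lem:Rexpand} gives $R E^*_N V = 0$ at once. The two assertions for $L'$ are handled in exactly the same way: for $1 \leq i \leq N$ and $v \in E^*_i V$, the relation $L' E^*_i = E^*_{i-1} L'$ gives $L' v = E^*_{i-1}(L' v) \in E^*_{i-1} V$, and the relation $L' E^*_0 = 0$ gives $L' E^*_0 V = 0$.

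I do not anticipate any genuine obstacle, since all the substance is already packaged in Lemma \ref{lem:Rexpand}; this lemma merely translates the operator identities there into statements about subspaces. The only point requiring minor care is to invoke the correct relation in each of the four cases, that is, to pair the interior indices with the appropriate commutation relation and the two extreme indices with the corresponding vanishing relation.
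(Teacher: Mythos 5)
Your proof is correct and follows one of the two routes the paper itself cites: the paper's proof of Lemma \ref{lem:RLaction} is simply ``By Lemma \ref{lem:RLy} or \ref{lem:Rexpand},'' and you have taken the Lemma \ref{lem:Rexpand} route, spelling out the standard detail that $E^*_i v = v$ for $v \in E^*_i V$ so that the operator identities translate into subspace containments. No gaps; this is the same argument, just written out fully.
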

\begin{proof} By Lemma  \ref{lem:RLy} or  \ref{lem:Rexpand}.
\end{proof}

\begin{lemma} We have
\begin{align*}
R^{N+1}=0, \qquad \qquad (L')^{N+1}=0.
\end{align*}
\end{lemma}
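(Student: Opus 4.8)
The plan is to prove that $R^{N+1} = 0$ and $(L')^{N+1} = 0$ by exploiting how $R$ and $L'$ shift the subconstituents, as recorded in Lemma \ref{lem:RLaction}. The key observation is that $R$ raises the grading by one and vanishes on the top subconstituent $E^*_N V$, while $L'$ lowers the grading by one and vanishes on the bottom subconstituent $E^*_0 V$. Since the standard module decomposes as the orthogonal direct sum $V = \sum_{i=0}^N E^*_i V$ (Lemma \ref{lem:c1}), it suffices to track the action on each graded piece.

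First I would fix $0 \leq i \leq N$ and iterate the containment $R E^*_i V \subseteq E^*_{i+1} V$ from Lemma \ref{lem:RLaction}. Applying $R$ repeatedly gives
\begin{align*}
R^{N+1-i} E^*_i V \subseteq E^*_{N+1} V = 0,
\end{align*}
where the final equality holds because there is no subconstituent of index $N+1$ (equivalently, the last application of $R$ is governed by $R E^*_N V = 0$). Hence $R^{N+1-i}$ annihilates $E^*_i V$, and since $N+1-i \leq N+1$, the larger power $R^{N+1}$ also annihilates $E^*_i V$ for every $i$. Because $V = \sum_{i=0}^N E^*_i V$, the matrix $R^{N+1}$ annihilates all of $V$, so $R^{N+1} = 0$.

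The argument for $(L')^{N+1} = 0$ is entirely symmetric, using instead the containment $L' E^*_i V \subseteq E^*_{i-1} V$ together with $L' E^*_0 V = 0$; iterating gives $(L')^{i+1} E^*_i V = 0$, hence $(L')^{N+1} E^*_i V = 0$ for all $i$, whence $(L')^{N+1} = 0$. Alternatively, since $L' = R^t$ (Definition \ref{def:RL}), one may simply take transposes: $(L')^{N+1} = (R^t)^{N+1} = (R^{N+1})^t = 0$. There is no real obstacle here; the only point requiring slight care is the bookkeeping on the exponents, namely verifying that $N+1$ applications of $R$ (resp. $L'$) suffice to push every subconstituent past the top index $N$ (resp. below the bottom index $0$), which follows immediately since the grading ranges over the $N+1$ values $0, 1, \ldots, N$.
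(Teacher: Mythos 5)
Your proof is correct and is exactly the argument the paper intends: the paper's own proof is the one-line citation ``By Lemma \ref{lem:RLaction},'' and you have simply spelled out the details of iterating the containments $R E^*_i V \subseteq E^*_{i+1}V$ and $L' E^*_iV \subseteq E^*_{i-1}V$ over the decomposition $V=\sum_{i=0}^N E^*_iV$. The transpose observation $(L')^{N+1}=(R^{N+1})^t$ is a nice shortcut but not a substantively different route.
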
 
\begin{proof} By Lemma \ref{lem:RLaction}.
\end{proof}

 \begin{lemma} \label{lem:SRL} We have
 \begin{align*}
 S R = -R S, \qquad \qquad S L' = - L' S.
 \end{align*}
 \end{lemma}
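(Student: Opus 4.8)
The plan is to prove that $SR = -RS$ and $SL' = -L'S$ by expressing $R$ and $L'$ in terms of the projectors $\{E^*_i\}_{i=0}^N$, and then computing directly with the formula $S = \sum_{i=0}^N (-1)^i E^*_i$ from Definition \ref{def:Smat}. The key observation is that both identities follow from the degree-shifting behavior of $R$ and $L'$ recorded in Lemma \ref{lem:Rexpand}, namely $RE^*_i = E^*_{i+1}R$ and $L'E^*_i = E^*_{i-1}L'$, together with the orthogonality relations $E^*_iE^*_j = \delta_{i,j}E^*_i$ from Lemma \ref{lem:basicEs}.

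First I would establish $SR = -RS$. Using $S = \sum_{i=0}^N (-1)^i E^*_i$, I would write
\begin{align*}
SR = \sum_{i=0}^N (-1)^i E^*_i R = \sum_{i=1}^N (-1)^i R E^*_{i-1},
\end{align*}
where the second equality uses $E^*_i R = R E^*_{i-1}$ for $1 \leq i \leq N$ (from Lemma \ref{lem:Rexpand}) together with $E^*_0 R = 0$, which removes the $i=0$ term. Reindexing with $j = i-1$, the right-hand side becomes $\sum_{j=0}^{N-1} (-1)^{j+1} R E^*_j = -R\sum_{j=0}^{N-1}(-1)^j E^*_j$. Since $RE^*_N = 0$ by Lemma \ref{lem:Rexpand}, I may freely append the $j=N$ term, obtaining $-R\sum_{j=0}^N (-1)^j E^*_j = -RS$. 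This gives $SR = -RS$.

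The argument for $SL' = -L'S$ is entirely parallel, using the dual relations $E^*_i L' = L' E^*_{i+1}$ for $0 \leq i \leq N-1$, together with $E^*_N L' = 0$ and $L'E^*_0 = 0$ from Lemma \ref{lem:Rexpand}. Alternatively, since $L' = R^t$ by Definition \ref{def:RL} and each $E^*_i$ is diagonal (hence symmetric), so that $S^t = S$, I could simply transpose the first identity: from $SR = -RS$ I get $R^t S^t = -S^t R^t$, i.e. $L' S = -S L'$, which is the desired relation. I anticipate no substantive obstacle here; the only point requiring care is correctly handling the boundary terms at $i=0$ and $i=N$, ensuring that the vanishing conditions $E^*_0 R = 0$ and $RE^*_N = 0$ are used to justify the reindexing so that both sums run over the full range $0 \leq i \leq N$. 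I would present the transpose argument as the streamlined version, since it reduces the second identity to the first with essentially no additional computation.
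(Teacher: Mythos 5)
Your proposal is correct and follows essentially the same route as the paper: the paper's proof also eliminates $S$ via Definition \ref{def:Smat} and evaluates the result using the shifting relations of Lemma \ref{lem:Rexpand}, with your computation simply writing out the reindexing and boundary-term details that the paper leaves implicit. Your transpose shortcut for $SL' = -L'S$ (using $L' = R^t$ and $S^t = S$) is a valid minor streamlining, but it does not change the substance of the argument.
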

 \begin{proof} To verify these relations, eliminate $S$ using \eqref{eq:Sdef}, and
 evaluate the results using
 Lemma  \ref{lem:Rexpand}.
 \end{proof}

 \begin{lemma} \label{lem:RLK} {\rm (See \cite[Lemma~3.1]{WenLiu}).} The matrices $R,L', A^*$ satisfy the following relations:
 \begin{align}
&  RA^*=q A^* R, \qquad \qquad L' A^*=q^{-1}  A^* L', \label{eq:RLK1} \\
 &(L')^2 R-(q+1) L'RL' + q R(L')^2 = - (q+1)q^{N+M}L' A^*, \label{eq:RLK2} \\
& L' R^2 - (q+1) RL'R + q R^2 L' = - (q+1) q^{N+M} A^*R. \label{eq:RLK3} 
 \end{align}
 \end{lemma}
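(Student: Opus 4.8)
The plan is to verify each of the three relations \eqref{eq:RLK1}--\eqref{eq:RLK3} by applying both sides to an arbitrary standard basis vector $\hat z$ and comparing the results, using the combinatorial descriptions in Lemma \ref{lem:RLy} together with the eigenvalue action of $A^*$ from Lemma \ref{lem:Ky}. Since $\lbrace \hat z \rbrace_{z \in X}$ is a basis for $V$, and the relations are claimed to hold in ${\rm Mat}_X(\mathbb C)$, it suffices to check that each side produces the same vector when applied to every $\hat z$.

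The commutation relations \eqref{eq:RLK1} are the easy part: for $z \in X$ with ${\rm dim}\,z = i$, Lemma \ref{lem:Ky} gives $A^* \hat z = q^{-i} \hat z$, while $R \hat z$ is a sum of $\hat y$ with ${\rm dim}\,y = i+1$ by Lemma \ref{lem:RLy}. Thus $RA^* \hat z = q^{-i} R \hat z$, whereas $A^* R \hat z = q^{-(i+1)} R \hat z$, so $R A^* = q A^* R$; the relation for $L'$ follows symmetrically. I would present this computation directly.

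For the tridiagonal-type relations \eqref{eq:RLK2} and \eqref{eq:RLK3}, I would first apply the factor of $A^*$ on the right to reduce each to a statement about the raising/lowering matrices that holds on a single subconstituent. Concretely, applying \eqref{eq:RLK2} to $\hat z$ with ${\rm dim}\,z = i$, the right-hand side becomes $-(q+1)q^{N+M} q^{-i} L' \hat z$, so the real content is the operator identity
\begin{align*}
(L')^2 R - (q+1) L' R L' + q R (L')^2 = -(q+1) q^{N+M} q^{-i} L'
\end{align*}
acting on the $i$th subconstituent $E^*_i V$. The heart of the proof is therefore a counting argument: for a fixed subspace $z$ of dimension $i$, one must enumerate the chains of covers realizing each of the three compositions on the left, expressed as a linear combination of vectors $\hat w$ with $w$ a subspace of dimension $i-1$ (note that all three terms $(L')^2R$, $L'RL'$, $R(L')^2$ indeed lower the dimension by one). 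The coefficient of each such $\hat w$ reduces to counting intermediate subspaces inside the attenuated-space poset, which introduces the Gaussian-binomial quantities $[n]_q$ and the factor $q^{N+M}$ coming from Lemma \ref{lem:RLy}(ii) and the structure of the poset.

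The main obstacle will be organizing this chain-counting cleanly. For a given target $\hat w$, the coefficient depends on whether $w$ and $z$ are themselves comparable or share a common cover/covered element, and one must carefully separate the degenerate cases (e.g. $w = z$ versus $w$ covered by a common element with $z$) from the generic case. Since the paper attributes the result to \cite[Lemma~3.1]{WenLiu}, I expect the cleanest route is to cite that reference for the raising/lowering combinatorics and supply only the short reduction showing how the $A^*$ eigenvalue $q^{-i}$ produces the stated right-hand side; alternatively, one verifies the identity on each subconstituent by the direct count and then reassembles over $i$ using $I = \sum_{i=0}^N E^*_i$ from Lemma \ref{lem:basicEs}. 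The relation \eqref{eq:RLK3} is handled by the transpose-symmetric argument, using $L' = R^t$ from Definition \ref{def:RL}.
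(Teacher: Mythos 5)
Your proposal is fine, but note that the paper contains no proof of this lemma: the parenthetical citation of \cite[Lemma~3.1]{WenLiu} \emph{is} the paper's entire justification (the result is imported from Liu, who states it for the matrix $K=q^{N+M}A^*$; the remark following Lemma \ref{lem:AsE} records this normalization). So your recommended fallback --- cite Liu for the hard combinatorial content --- is exactly the paper's approach. What you supply beyond the paper is correct and worthwhile: the verification of \eqref{eq:RLK1} by applying both sides to each $\hat z$ via Lemmas \ref{lem:Ky} and \ref{lem:RLy} is complete as written, and your observation that \eqref{eq:RLK3} is the transpose of \eqref{eq:RLK2} (using $L'=R^t$ and $(A^*)^t=A^*$) is a genuine economy that halves the remaining work and is not mentioned in the paper. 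Be aware, though, that the one substantive step in your alternative counting route is left unexecuted: after your (correct) reduction of \eqref{eq:RLK2} to an identity on $E^*_iV$ with right-hand side $-(q+1)q^{N+M-i}L'$, the coefficient-of-$\hat w$ enumeration in $(L')^2R$, $L'RL'$, $R(L')^2$ --- with cases according to whether $z$ covers $w$, and, when $w,z$ are incomparable with $\dim(w\cap z)=i-2$, whether $(w+z)\cap h=0$ (this is what makes the attenuated space harder than the projective geometry $L_N(q)$) --- is precisely where all the work lies; that is what Liu's lemma supplies, so your proof is complete only in its citation form. One small slip: the cover count $q^M\lbrack N-i\rbrack_q$ you invoke comes from the unlabeled first lemma of Section 3, not from Lemma \ref{lem:RLy}, which has no item (ii).
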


\section{The subconstituent algebra $T$ and its irreducible modules}
 \noindent  We continue to discuss the Attenuated Space poset $\mathcal A_q(N,M)$. 
 \begin{definition}\label{def:SubT} \rm Let $T$ denote the subalgebra of ${\rm Mat}_X(\mathbb C)$ generated by $R,L', A^*$.
 We call $T$ the {\it subconstituent algebra} of $\mathcal A_q(N,M)$.
 \end{definition}
 
 \begin{remark} \rm In \cite{uniform} the algebra $T$ is called the incidence algebra. 
 \end{remark}
 
 \noindent We refer the reader to \cite{caughman1, cerzo, go, terwSub1, terwSub2, terwSub3, int, zitnik} for background information about the algebra $T$.
 \medskip
 
 \noindent Recall the standard module $V$. By a {\it $T$-module} we mean a subspace $ W \subseteq V$ such that $T W \subseteq W$.
 A $T$-module $W$ is said to be {\it irreducible} whenever $W\not=0$ and $W$ does not contain a $T$-module besides $0$ and $W$.
 \medskip
 
 \noindent  The following comments about $T$-modules are adapted from  \cite[Sections~2, 3]{zitnik}.
 The algebra $T$ is closed under the conjugate-transpose map, because the generators $R, L', A^*$ have all entries real and $R^t=L'$, $(A^*)^t = A^*$.
 The above closure implies that
  the orthogonal complement of a $T$-module is a $T$-module. Consequently, every $T$-module is an orthogonal direct sum of
 irreducible $T$-modules. In particular, the standard module
  $V$ is an orthogonal direct sum of irreducible $T$-modules. The irreducible $T$-modules are
 described as follows. Let $W$ denote an irreducible $T$-module. Then $W$ is an orthogonal direct sum of the nonzero subspaces among $\lbrace E^*_iW\rbrace_{i=0}^N$.
By the {\it endpoint} of $W$ we mean ${\rm min} \lbrace i \vert 0 \leq i \leq N, \; E^*_iW \not=0\rbrace$.
By the {\it diameter} of $W$ we mean $\bigl \vert \lbrace i \vert 0 \leq i \leq N, \; E^*_i W \not=0\rbrace \bigr \vert -1$.

 \begin{lemma} \label{lem:Iso1} {\rm (See \cite[Theorems~2.5,~3.3]{uniform}).} For $0 \leq r,d\leq N$ the following are equivalent:
 \begin{enumerate}
 \item[\rm (i)] there exists an irreducible $T$-module with endpoint $r$ and diameter $d$;
 \item[\rm (ii)] $ N-2r \leq d \leq N-r$ and $d \leq N+M-2r$.
 \end{enumerate}
 \end{lemma}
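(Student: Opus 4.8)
The plan is to prove the equivalence of (i) and (ii) by a dimension-counting argument that identifies the irreducible $T$-modules with those of the underlying uniform poset structure, and then extracts the endpoint/diameter constraints from the known combinatorial data of $\mathcal A_q(N,M)$. Since the statement cites \cite[Theorems~2.5,~3.3]{uniform}, the cleanest route is to realize $T$ as the incidence algebra of a uniform poset and invoke the general classification, but I would still want to verify that the parameters match. First I would recall that $\mathcal A_q(N,M)$ is uniform (as stated in the introduction, via \cite{uniform}), so $R$ and $L'$ together with $\mathbf M^*$ generate an algebra whose irreducible modules are parametrized by endpoint $r$ and diameter $d$, and each such module is a "thin" chain $E^*_r W, E^*_{r+1} W, \ldots, E^*_{r+d} W$ on which $R$ raises and $L'$ lowers.

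The key structural input is the relations in Lemma \ref{lem:RLK}, especially \eqref{eq:RLK1}, which shows $R$ and $L'$ shift the $A^*$-eigenvalue by factors $q^{\pm 1}$; combined with Lemma \ref{lem:RLaction} this confines each irreducible module to consecutive subconstituents, so the endpoint and diameter are well defined and $0 \le r$, $r + d \le N$, giving $d \le N - r$ immediately. The heart of the argument is to pin down the other two inequalities, $N - 2r \le d$ and $d \le N + M - 2r$. These should come from analyzing when the raising and lowering actions can fail to be injective or surjective between consecutive subconstituents. Concretely, I would study the composition $L' R$ and $R L'$ restricted to $E^*_i V$ using the down-up-type relations \eqref{eq:RLK2}, \eqref{eq:RLK3}: these relations let one compute the eigenvalue of $L'R$ (equivalently $RL'$) on each subconstituent in terms of $q^i$ and $q^{N+M}$, and the vanishing or non-vanishing of these eigenvalues is precisely what detects the endpoints and the terminal points of an irreducible module.

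The step I expect to be the main obstacle is establishing that the candidate $(r,d)$ in (ii) are exactly realized, rather than merely necessary. For the forward direction (i) $\Rightarrow$ (ii), the bounds follow from the module living inside consecutive subconstituents together with the constraint that $R$ must act injectively from the endpoint subconstituent upward and $L'$ injectively from the top subconstituent downward until the module terminates; quantifying this with the subconstituent dimensions from Lemma \ref{lem:subSize} and the covering numbers from Lemma 3.1 yields the inequalities. For the converse (ii) $\Rightarrow$ (i), one must exhibit, for each admissible $(r,d)$, a nonzero irreducible module, and the natural approach is to invoke the uniform-poset classification directly: the parameter region in (ii) is exactly the image of the map sending a uniform poset's local eigenvalue data to realizable $(r,d)$ pairs. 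I would therefore lean on \cite[Theorems~2.5,~3.3]{uniform} for existence and reserve the explicit verification for matching the formulas; the delicate point is confirming that the extra constraint $d \le N + M - 2r$ (which encodes the $M$-dimensional subspace $h$ and distinguishes $\mathcal A_q(N,M)$ from the projective geometry $L_N(q)$) is correctly reproduced by the relation \eqref{eq:RLK2} and the dimensions in Lemma \ref{lem:subSize}.
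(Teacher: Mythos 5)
The paper offers no proof of this lemma at all: it is quoted directly from \cite[Theorems~2.5,~3.3]{uniform}, so your fallback position (invoke the uniform-poset classification) coincides with what the paper does, and that part of your proposal is unobjectionable. Likewise your derivation of $d \le N-r$ from the fact that an irreducible module lives in consecutive subconstituents between levels $r$ and $r+d \le N$ is fine.

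The gap is in the portion where you claim the remaining two inequalities, $N-2r \le d$ and $d \le N+M-2r$, can be extracted from the down-up relations \eqref{eq:RLK2}, \eqref{eq:RLK3} by analyzing when the eigenvalues of $L'R$ and $RL'$ on the subconstituents vanish, supplemented by dimension counts. This step would fail, for a structural reason. For a thin module with endpoint $r$ and diameter $d$, the relations force the lowering scalars to be exactly the $\xi'_i(r,d)$ of Definition~\ref{def:xip}, and these are nonzero for all $1 \le i \le d$ and for \emph{every} pair $(r,d)$ with $r \ge 0$ and $r+d \le N$. Consequently, for every such pair there exists a consistent abstract module (with the actions of Proposition~\ref{lem:wbasis}) satisfying all the relations of Lemma~\ref{lem:RLK}; for instance, when $N \ge 1$ the pair $(r,d)=(0,0)$ violates $N-2r \le d$, and when $M < N$ the pair $(r,d)=(N,0)$ violates $d \le N+M-2r$, yet in both cases a one-dimensional module with $Rw_0=L'w_0=0$ satisfies every relation trivially. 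So no manipulation of the relations, and no vanishing analysis of $L'R$ or $RL'$, can single out the region (ii). Those two inequalities are statements about which abstract modules actually occur inside the standard module $V$ of this particular poset, and that is genuinely combinatorial: in \cite{uniform} it rests on the uniform structure of $\mathcal A_q(N,M)$ (injectivity of suitable powers of $R$ on $\ker L' \cap E^*_rV$, which uses the specific coefficients, not merely the shape of the relations) and on the explicit decomposition of $V$ whose multiplicity formula is reproduced here as Proposition~\ref{prop:multForm}, positive precisely on the region (ii). Your ``injectivity of $R$ plus covering numbers'' sentence is where all of that content would have to live, and as sketched it is not an argument; if you do not want to take Lemma~\ref{lem:Iso1} on citation (as the paper does), that is the part you would actually have to prove.
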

 \noindent 
 Let $W$ and $W'$ denote $T$-modules. By an {\it isomorphism of $T$-modules from $W$ to $W'$}, we mean an isomorphism of vector 
 spaces $\zeta : W \to W'$ such that $ \zeta  \eta= \eta \zeta $ for all $\eta \in T$. The $T$-modules $W, W'$ are said to be {\it isomorphic}
 whenever there exists an isomorphism of $T$-modules from $W$ to $W'$. 
 
 \begin{lemma}\label{lem:Iso2} {\rm (See \cite[Theorem~2.5]{uniform}).}
 Let $W$ and $W'$ denote irreducible $T$-modules, with endpoints $r, r'$  and diameters $d, d'$ respectively. Then the following are equivalent:
 \begin{enumerate}
 \item[\rm (i)]  $W$ and $W'$ are isomorphic;
 \item[\rm (ii)] $r=r'$ and $d=d'$.
 \end{enumerate}
 \end{lemma}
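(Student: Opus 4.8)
The plan is to prove the two implications separately; (i)$\Rightarrow$(ii) is routine, while (ii)$\Rightarrow$(i) carries the real content.

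For (i)$\Rightarrow$(ii), I would first note that each idempotent $E^*_i$ lies in $T$: by Lemma~\ref{lem:AsE} the algebra ${\bf M}^*$ is generated by $A^*\in T$, and $\{E^*_i\}_{i=0}^N$ is a basis of ${\bf M}^*$. Now suppose $\zeta\colon W\to W'$ is an isomorphism of $T$-modules. Since $\zeta$ commutes with every element of $T$, in particular with each $E^*_i$, it restricts to a vector-space isomorphism $E^*_iW\to E^*_iW'$ for all $i$. Hence $E^*_iW\neq 0$ if and only if $E^*_iW'\neq 0$, so the sets $\{i:E^*_iW\neq 0\}$ and $\{i:E^*_iW'\neq 0\}$ coincide. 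As the endpoint and diameter are defined solely from this set, we conclude $r=r'$ and $d=d'$.

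For (ii)$\Rightarrow$(i), the idea is to show that an irreducible module with endpoint $r$ and diameter $d$ has a rigid standard shape determined by $(r,d)$ and $q,N,M$, and then to identify the two modules basis-by-basis. First I would check that the nonzero subconstituents of $W$ occur at consecutive indices $r\le i\le r+d$: a gap at some index $i_0$ would make $\sum_{i<i_0}E^*_iW$ a proper nonzero $T$-submodule, since it is stable under $A^*$, under $L'$ (which lowers the index, by Lemma~\ref{lem:RLaction}), and under $R$ (which sends $E^*_{i_0-1}W$ into $E^*_{i_0}W=0$), contradicting irreducibility. The crucial input is then that $W$ is thin, that is $\dim E^*_iW=1$ for $r\le i\le r+d$; this is precisely the uniform structure of $\mathcal A_q(N,M)$ established in \cite{uniform}, and it does not follow from the relations \eqref{eq:RLK1}--\eqref{eq:RLK3} alone. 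Granting thinness, fix $0\neq w\in E^*_rW$; then $L'w=0$ by Lemma~\ref{lem:RLaction} since $E^*_{r-1}W=0$, and $R$ maps each $E^*_{r+j}W$ isomorphically onto $E^*_{r+j+1}W$ for $0\le j<d$ (otherwise a lower submodule would appear). Thus $w_j:=R^jw$ $(0\le j\le d)$ is a basis of $W$ with $A^*w_j=q^{-(r+j)}w_j$, $Rw_j=w_{j+1}$ (and $Rw_d=0$), and $L'w_j=\lambda_jw_{j-1}$ for scalars $\lambda_j$ with $\lambda_0=0$.

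It remains to see that the $\lambda_j$ depend only on $(r,d)$ and $q,N,M$. Applying \eqref{eq:RLK3} to $w_j$ gives the second-order recurrence
\[
\lambda_{j+2}-(q+1)\lambda_{j+1}+q\lambda_j=-(q+1)q^{N+M}q^{-(r+j+1)}\qquad(0\le j\le d-1),
\]
where evaluating at $j=d-1$ and using $R^2w_{d-1}=0$ forces the boundary value $\lambda_{d+1}=0$; together with $\lambda_0=0$ this is a tridiagonal two-point boundary value problem for $\lambda_1,\dots,\lambda_d$, whose coefficient matrix is nonsingular for these $q$-parameters, so the $\lambda_j$ are uniquely determined by $(r,d)$ and $q,N,M$. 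Consequently, if $W'$ is a second irreducible module with the same endpoint and diameter, and $w'_j$ its standard basis, then the $L'$-scalars of $W'$ equal those of $W$, and the linear map $\zeta\colon w_j\mapsto w'_j$ intertwines $R$, $L'$, and $A^*$, hence all of $T$, giving the desired isomorphism. I expect the main obstacle to be the thinness input: once it is in hand the remaining structure is pinned down by a determinate recurrence, but thinness itself relies on the combinatorial uniform structure of the poset rather than on the algebra relations.
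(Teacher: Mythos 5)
Your proposal is correct, but it takes a genuinely different route from the paper: the paper gives no argument for this lemma at all, simply citing \cite[Theorem~2.5]{uniform}, whereas you reconstruct a proof. Your direction (i)$\Rightarrow$(ii) is complete and elementary (the key observation that each $E^*_i$ lies in $T$ is exactly right). In (ii)$\Rightarrow$(i), the consecutivity argument and the argument that $R$ cannot vanish on an intermediate subconstituent are both sound, and your recurrence step does pin down the lowering scalars: the homogeneous recurrence $\lambda_{j+2}-(q+1)\lambda_{j+1}+q\lambda_j=0$ has general solution $A+Bq^{j}$, and the boundary conditions $\lambda_0=\lambda_{d+1}=0$ force $A=B=0$ since $q^{d+1}\neq 1$; so the two-point boundary value problem is indeed nonsingular, and its unique solution is $\lambda_j=\xi'_j(r,d)$ from Definition~\ref{def:xip}. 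Two comparative remarks. First, as you yourself flag, the thinness of $W$ is imported from \cite{uniform}, which is the very reference the paper cites for the whole lemma; so your proof is self-contained only modulo that input, and your caution is well placed --- thinness does not follow from the relations \eqref{eq:RLK1}--\eqref{eq:RLK3} alone, and it is precisely where the poset-specific uniform structure enters. Second, the paper's Proposition~\ref{lem:wbasis} (quoted from \cite{Terint}) already packages thinness together with the explicit scalars $\xi'_i(r,d)$ depending only on $(r,d)$; granting that proposition, (ii)$\Rightarrow$(i) is immediate, since both modules then carry bases on which $R$, $L'$, $A^*$ act by identical matrices, and $T$ is generated by these three. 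What your longer route buys is transparency: it separates what is forced by the down-up relations (the recurrence, hence the uniqueness of the scalars) from what requires the combinatorics of $\mathcal A_q(N,M)$ (thinness), which the bare citations in the paper leave hidden.
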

 
 \noindent Following \cite[Section~5]{WenLiu}  let $\Psi$ denote the set of isomorphism classes of irreducible $T$-modules. By Lemmas \ref{lem:Iso1}, \ref{lem:Iso2} we view
 \begin{align} \label{eq:PsiRD}
 \Psi= \lbrace (r,d) \;\vert \; 0 \leq r,d\leq N, \quad N - 2r \leq d \leq N-r, \quad d \leq N+M-2r\rbrace.
 \end{align} 
 
 \begin{definition} \label{def:xip} \rm {\rm (See \cite[line (16)] {Terint}).} For $(r,d) \in \Psi$ define
 \begin{align*}
 \xi'_{i} (r,d) = \frac{ q^{N+M-r-d} (q^i-1)(q^{d+1-i}-1)}{(q-1)^2} \qquad \qquad (1 \leq i \leq d).
 \end{align*}
 \end{definition}
 \noindent Referring to Definition \ref{def:xip}, we have $\xi'_{i}(r,d)\not=0$ for $1 \leq i \leq d$.

 \begin{proposition} \label{lem:wbasis}  {\rm (See \cite[p.~78]{Terint}).} Let $W$ denote an irreducible $T$-module, with endpoint $r$ and diameter $d$.
 There exists a basis $\lbrace w_i \rbrace_{i=0}^d$ of $W$ such that
 \begin{enumerate}
 \item[\rm (i)] $w_i \in E^*_{r+i} V\quad (0 \leq i \leq d)$;
 \item[\rm (ii)] $Rw_i = w_{i+1} \quad  (0 \leq i \leq d-1), \qquad  Rw_d =0$;
 \item[\rm (iii)] $L'w_i = \xi'_{i} (r,d) w_{i-1} \quad (1 \leq i \leq d), \qquad  L' w_0=0$.
 \end{enumerate}
 \end{proposition}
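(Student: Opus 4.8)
The plan is to construct the basis by repeatedly applying $R$ to a suitable vector at the bottom level of $W$, and to read off the lowering coefficients from the down-up relation \eqref{eq:RLK3}. Since $r$ is the endpoint, $L'$ annihilates $E^*_rW$ (it maps $E^*_rW$ into $E^*_{r-1}W=0$ by Lemma \ref{lem:RLaction}), while $R E^*_rW\subseteq E^*_{r+1}W$ and $L'E^*_{r+1}W\subseteq E^*_rW$, so $L'R$ restricts to an endomorphism of the nonzero finite-dimensional space $E^*_rW$. I would therefore choose $w$ to be an eigenvector of this restriction: a nonzero $w\in E^*_rW$ with $L'w=0$, with $A^*w=q^{-r}w$ (by Lemma \ref{lem:AsES}), and with $L'Rw=\theta_1 w$ for some scalar $\theta_1$. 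Setting $w_i=R^iw$ gives $w_i\in E^*_{r+i}V$ and $A^*w_i=q^{-(r+i)}w_i$.

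The heart of the argument is to show by induction on $i$ that $L'w_i=\theta_i w_{i-1}$ for scalars $\theta_i$, with $\theta_0=0$ and $\theta_1$ as above. Applying \eqref{eq:RLK3} to $w_{i-1}=R^{i-1}w$ and substituting the induction hypotheses $L'w_i=\theta_iw_{i-1}$ and $L'w_{i-1}=\theta_{i-1}w_{i-2}$, together with $A^*w_i=q^{-(r+i)}w_i$, yields
\begin{align*}
L'w_{i+1}=\bigl((q+1)\theta_i-q\theta_{i-1}-(q+1)q^{N+M-r-i}\bigr)w_i,
\end{align*}
so that $L'w_{i+1}=\theta_{i+1}w_i$ where $\theta_{i+1}-(q+1)\theta_i+q\theta_{i-1}=-(q+1)q^{N+M-r-i}$. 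This closes the induction and shows that $U={\rm Span}\{w_i\}_{i\ge 0}$ is invariant under $R$, $A^*$, and $L'$, hence is a nonzero $T$-submodule of $W$; irreducibility forces $U=W$. Letting $s$ be the largest index with $w_s\neq 0$, the $w_i$ for $0\le i\le s$ lie in distinct eigenspaces of $A^*$ and so are a basis of $W$; thus each $E^*_{r+i}W$ is one-dimensional, $W$ is thin, and counting the nonzero subconstituents forces $s=d$. In particular $Rw_d=w_{d+1}=0$, and $L'w_{d+1}=\theta_{d+1}w_d$ with $w_d\neq 0$ gives the boundary value $\theta_{d+1}=0$.

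It remains to identify $\theta_i$. The recursion is a second-order linear difference equation whose homogeneous part has characteristic roots $1$ and $q$; since $q^{-1}$ is distinct from both, a particular solution of the form $Cq^{-i}$ exists, with $C=-q^{N+M-r+1}/(q-1)^2$. The two boundary conditions $\theta_0=0$ and $\theta_{d+1}=0$ pin down the two remaining constants uniquely, and a direct computation shows the resulting closed form equals $\xi'_{i}(r,d)$ of Definition \ref{def:xip}. Taking this basis $\{w_i\}_{i=0}^d$ then gives $w_i\in E^*_{r+i}V$, $Rw_i=w_{i+1}$ for $0\le i\le d-1$ with $Rw_d=0$, and $L'w_i=\xi'_{i}(r,d)w_{i-1}$ for $1\le i\le d$ with $L'w_0=0$, which is exactly the asserted statement.

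The step I expect to be the main obstacle is the very first one: arranging that $L'Rw$ is a scalar multiple of $w$. A priori $E^*_rW$ may have dimension greater than one, and for a generic bottom vector the string $\{R^iw\}$ need not be $L'$-invariant, so the induction would not close. Choosing $w$ to be an eigenvector of $L'R$ on $E^*_rW$ circumvents this difficulty and, pleasantly, produces the thinness of $W$ as a byproduct rather than requiring it as a hypothesis; once thinness is established the remainder is the routine solution of the recursion.
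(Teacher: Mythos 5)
Your proof is correct, and it takes a genuinely different route from the paper, which offers no argument at all for this proposition: there it is simply quoted from \cite[p.~78]{Terint}, just as the relations in Lemma \ref{lem:RLK} are quoted from \cite[Lemma~3.1]{WenLiu}. Your construction --- take an eigenvector $w$ of $L'R$ acting on $E^*_rW$ (well defined since $L'$ vanishes on $E^*_rW$ because $r$ is the endpoint, and $L'R E^*_rW\subseteq E^*_rW$), set $w_i=R^iw$, and close the $L'$-action by a two-step induction using \eqref{eq:RLK3} --- is sound: the induction gives $L'w_{i+1}=\theta_{i+1}w_i$ with $\theta_{i+1}-(q+1)\theta_i+q\theta_{i-1}=-(q+1)q^{N+M-r-i}$; the span of the $w_i$ is invariant under $R,L',A^*$ and hence equals $W$ by irreducibility and Definition \ref{def:SubT}; the diameter count forces the string to have length exactly $d+1$; and the boundary conditions $\theta_0=0$ (endpoint) and $\theta_{d+1}=0$ (from $w_{d+1}=0$, $w_d\neq 0$) determine the $\theta_i$ uniquely. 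I verified the closing computation: the general solution of the recurrence is $\alpha+\beta q^i-\frac{q^{N+M-r+1}}{(q-1)^2}\,q^{-i}$, and imposing the two boundary conditions yields $\theta_i=\frac{q^{N+M-r-d}(q^i-1)(q^{d+1-i}-1)}{(q-1)^2}=\xi'_i(r,d)$, exactly Definition \ref{def:xip}. What your route buys: the proposition becomes self-contained modulo Lemma \ref{lem:RLK}, and thinness of the irreducible $T$-modules (each $E^*_{r+i}W$ is one-dimensional) emerges as a byproduct rather than being assumed or imported. What the paper's route buys: by citing \cite{Terint} (and, behind it, \cite{uniform}), it inherits the statement from the general theory of uniform posets, where this module structure is established once for a whole family of posets rather than re-derived for $\mathcal A_q(N,M)$ alone.
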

 
 \begin{lemma} \label{lem:AsW} Referring to Proposition  \ref{lem:wbasis},
 we have $A^* w_i = q^{-r-i} w_i$ for $0 \leq i \leq d$.
 \end{lemma}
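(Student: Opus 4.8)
The plan is to prove $A^* w_i = q^{-r-i} w_i$ directly from the defining properties of the basis $\lbrace w_i \rbrace_{i=0}^d$ in Proposition \ref{lem:wbasis}. The key observation is that property (i) pins down each $w_i$ to a single subconstituent: we have $w_i \in E^*_{r+i} V$ for $0 \leq i \leq d$. Since $E^*_{r+i} V$ is, by Lemma \ref{lem:AsES}, precisely the eigenspace of $A^*$ for the eigenvalue $q^{-(r+i)}$, any vector lying in $E^*_{r+i} V$ is automatically an eigenvector of $A^*$ with that eigenvalue.

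First I would fix $i$ with $0 \leq i \leq d$ and invoke Proposition \ref{lem:wbasis}(i) to record that $w_i \in E^*_{r+i} V$. Next I would apply Lemma \ref{lem:AsES}, which identifies $E^*_{r+i} V$ as the $A^*$-eigenspace associated with the eigenvalue $q^{-(r+i)}$. Combining these two facts immediately yields
\begin{align*}
A^* w_i = q^{-(r+i)} w_i = q^{-r-i} w_i,
\end{align*}
and since $i$ was arbitrary in the range $0 \leq i \leq d$, this establishes the claim. One could alternatively argue at the level of the spectral decomposition $A^* = \sum_{j=0}^N q^{-j} E^*_j$ from Lemma \ref{lem:AsE}, using $E^*_j w_i = \delta_{j, r+i} w_i$ (which follows from Lemma \ref{lem:basicEs} together with $w_i \in E^*_{r+i} V$), but the eigenspace description is the cleanest route.

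There is essentially no obstacle here: the statement is a routine consequence of the fact that the basis vectors are homogeneous with respect to the subconstituent grading, and $A^*$ acts as a scalar on each subconstituent. The only point requiring any care is making sure the eigenvalue index is correct, namely that $w_i$ sits in the $(r+i)$th subconstituent rather than the $i$th; this is exactly what property (i) of the basis records, and it reflects that the module $W$ has endpoint $r$, so its lowest nonzero subconstituent is $E^*_r V$.
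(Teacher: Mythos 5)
Your proof is correct and matches the paper's argument in substance: the paper cites Lemma \ref{lem:AsE} (the spectral decomposition $A^* = \sum_{j} q^{-j} E^*_j$) together with Proposition \ref{lem:wbasis}(i), while you cite the equivalent eigenspace formulation (Lemma \ref{lem:AsES}) with the same Proposition, and you even note the spectral-decomposition variant as an alternative. This is essentially the same one-line argument, correctly executed.
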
 
 \begin{proof} By Lemma \ref{lem:AsE} and Proposition \ref{lem:wbasis}(i).
 \end{proof}
 
 \begin{definition}\label{def:mult} \rm For $(r,d) \in \Psi$ let ${\rm mult}(r,d)$ denote the multiplicity with which the irreducible
 $T$-module with endpoint $r$ and diameter $d$ appears in the standard module $V$.
 \end{definition}
 \noindent Our next goal is to express ${\rm mult}(r,d)$ in terms of $r$ and $d$.
 \medskip
  
 \noindent We bring in some notation. For an integer $r$ define
\begin{align}
\mu_r = \binom{N}{r}_q \binom{M}{r}_q (q^r-1)(q^r-q) \cdots (q^r-q^{r-1}). \label{eq:muForm}
\end{align}
\noindent Note that $\mu_r\not=0$ if and only if $0 \leq r\leq {\rm min}(N,M)$.
\medskip

 \noindent The following is a reformulation of item 7 in \cite[Theorem~3.3]{uniform}.
 \begin{proposition} \label{prop:multForm} {\rm (See \cite[Theorem~3.3]{uniform}).} Let $(r,d) \in \Psi$. 
 \begin{enumerate}
 \item[\rm (i)] If $r+d=N$ then ${\rm mult}(r,d)=\mu_r$.
 \item[\rm (ii)] If $r+d\leq N-1$ then
 \begin{align*}
 {\rm mult}(r,d) = \mu_{2r+d-N} \Biggl( \binom{2N-2r-d}{N-r-d}_q - \binom{2N-2r-d}{N-r-d-1}_q \Biggr).
 \end{align*}
 \end{enumerate}
 \end{proposition}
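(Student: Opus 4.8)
The plan is to determine the multiplicities by combining the decomposition of the standard module $V$ into irreducible $T$-modules with the known subconstituent dimensions. As explained before Lemma \ref{lem:Iso1}, $V$ is an orthogonal direct sum of irreducible $T$-modules, and by Proposition \ref{lem:wbasis} the module $W$ with endpoint $r$ and diameter $d$ satisfies $\dim E^*_jW=1$ for $r\le j\le r+d$ and $\dim E^*_jW=0$ otherwise. Applying $E^*_j$ to this decomposition and taking dimensions therefore gives, for $0\le j\le N$,
\begin{align*}
q^{Mj}\binom{N}{j}_q=\dim E^*_jV=\sum_{(r,d)\in\Psi,\ r\le j\le r+d}{\rm mult}(r,d),
\end{align*}
by Lemmas \ref{lem:c1}, \ref{lem:subSize}. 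These $N+1$ relations do not by themselves pin down the individual multiplicities, so I would extract finer information from the raising matrix $R$.

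The key additional input is that, by Proposition \ref{lem:wbasis}(ii), $R$ acts on each irreducible module as a single nilpotent Jordan block graded by the subconstituents. Consequently ${\rm mult}(r,d)$ is recovered from the ranks $\rho^{(k)}_j:={\rm rank}(R^k|_{E^*_jV})$ through the inclusion--exclusion identity
\begin{align*}
{\rm mult}(r,d)=\bigl(\rho^{(d)}_r-\rho^{(d+1)}_{r-1}\bigr)-\bigl(\rho^{(d+1)}_r-\rho^{(d+2)}_{r-1}\bigr).
\end{align*}
This holds because, by the same block structure, $\rho^{(k)}_j=\sum_{(r',d'):\,r'\le j,\ r'+d'\ge j+k}{\rm mult}(r',d')$, so that the pairing $\rho^{(d)}_r-\rho^{(d+1)}_{r-1}$ restricts the endpoint to $r'=r$ and the further difference restricts the diameter to $d'=d$. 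It then remains to evaluate the ranks $\rho^{(k)}_j$ independently of the unknown multiplicities. For this I would iterate Lemma \ref{lem:RLy}: counting saturated chains shows $R^k\hat z=[k]^!_q\sum_{y}\hat y$, the sum over the $y\in X$ lying $k$ levels above $z$ (all intermediate subspaces of such a $y$ again lie in $X$, and the number of flags in a $k$-dimensional quotient is $[k]^!_q$). Since $[k]^!_q\neq0$, the rank $\rho^{(k)}_j$ equals the rank of the $0$--$1$ inclusion matrix between the $j$-dimensional and $(j+k)$-dimensional vertices of $\mathcal A_q(N,M)$.

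The main obstacle is precisely this last step: evaluating the inclusion-matrix ranks and then simplifying the alternating combination into the stated closed form. This is where the $q$-binomial identities and the product $\mu_r$ in \eqref{eq:muForm} enter, and where the two cases naturally separate. When $r+d=N$ the two higher-rank terms vanish (as $R^{k}E^*_jV=0$ once $j+k>N$), leaving ${\rm mult}(r,d)=\rho^{(N-r)}_r-\rho^{(N-r+1)}_{r-1}=\mu_r$; when $r+d\le N-1$ all four terms survive and collapse to the binomial difference weighted by $\mu_{2r+d-N}$.

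Because the decomposition of $V$ into irreducible $T$-modules for the attenuated space, together with these multiplicities, is already established in \cite[Theorem~3.3]{uniform}, the cleanest route is to invoke \cite[Theorem~3.3, item~7]{uniform} in place of computing the inclusion-matrix ranks by hand, and then to verify by routine manipulation of the symbols $\binom{\cdot}{\cdot}_q$ and of \eqref{eq:muForm} that its multiplicity formula agrees with the two displayed expressions. I expect the reformulation itself to be straightforward bookkeeping, once the indices $2r+d-N$ and $N-r-d$ appearing above are matched against the endpoint/diameter parametrization of $\Psi$ in \eqref{eq:PsiRD}; the genuine content is the rank evaluation, which \cite{uniform} supplies.
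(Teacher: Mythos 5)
Your final route---invoking \cite[Theorem~3.3, item~7]{uniform} and checking by routine bookkeeping that its multiplicity formula matches the two displayed expressions---is exactly the paper's approach: the paper states the proposition as ``a reformulation of item 7 in \cite[Theorem~3.3]{uniform}'' and offers no proof beyond that citation. Your preliminary sketch via the ranks of $R^k$ on the subconstituents (Jordan-block grading plus inclusion-matrix ranks) is a plausible independent strategy, but since you bypass it yourself---the rank evaluation, which you correctly identify as the genuine content, is left undone---the operative argument coincides with the paper's.
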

 
 \begin{proposition} \label{prop:PsiSum} Let $(r,d) \in \Psi$. Then $\Psi$ contains
 \begin{align*}
 (r-\ell, d+2\ell) \qquad \qquad (0 \leq \ell \leq N-r-d).
 \end{align*}
 \noindent Moreover
 \begin{align} \label{eq:sumRD}
  \sum_{\ell=0}^{N-r-d} {\rm mult}(r-\ell, d+2\ell)=
 \mu_{2r+d-N} \binom{2N-2r-d}{N-r-d}_q.
 \end{align}
 \end{proposition}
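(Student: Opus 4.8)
The plan is to treat the two assertions separately. For the containment claim, I would verify directly that $(r',d') := (r-\ell, d+2\ell)$ satisfies the defining conditions of $\Psi$ in \eqref{eq:PsiRD} for each $\ell$ with $0 \le \ell \le N-r-d$. Substituting $r'=r-\ell$ and $d'=d+2\ell$, the condition $N-2r' \le d'$ collapses to $N-2r \le d$, the condition $d' \le N+M-2r'$ collapses to $d \le N+M-2r$, and the condition $d' \le N-r'$ collapses to $\ell \le N-r-d$; all three hold by the hypotheses on $(r,d)$ and on $\ell$. The remaining range bounds are then immediate: $d' \ge d \ge 0$ and $d' \le N-r' \le N$, while $r' = r-\ell \le r \le N$ and $r' \ge 0$ because $\ell \le N-r-d \le r$, the last inequality being a restatement of $N-2r \le d$.

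For the summation, I would first record two invariants. With $(r',d')=(r-\ell,d+2\ell)$, a one-line calculation gives $2r'+d'-N = 2r+d-N$ and $2N-2r'-d' = 2N-2r-d$, both independent of $\ell$, whereas $N-r'-d' = (N-r-d)-\ell$. Write $s = 2r+d-N$, $m = 2N-2r-d$, and $n = N-r-d$ (the upper limit of the sum); note $s \ge 0$ since $N-2r \le d$. The first invariant is the crucial simplification: it shows that every term of the sum carries the common factor $\mu_s$.

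Next I would put each multiplicity into a single uniform shape. For $0 \le \ell \le n-1$ we have $r'+d' = r+d+\ell \le N-1$, so Proposition \ref{prop:multForm}(ii) applies and yields ${\rm mult}(r',d') = \mu_s\bigl(\binom{m}{n-\ell}_q - \binom{m}{n-\ell-1}_q\bigr)$. For the top term $\ell = n$ we have $r'+d'=N$, so Proposition \ref{prop:multForm}(i) gives ${\rm mult}(r',d') = \mu_{r-n} = \mu_s$; but this agrees with the part-(ii) expression evaluated at $\ell=n$, since there $\binom{m}{0}_q - \binom{m}{-1}_q = 1-0 = 1$ by the convention $\binom{m}{-1}_q = 0$. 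Hence the formula ${\rm mult}(r-\ell, d+2\ell) = \mu_s\bigl(\binom{m}{n-\ell}_q - \binom{m}{n-\ell-1}_q\bigr)$ is valid for all $0 \le \ell \le n$.

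Finally I would sum and telescope. Reindexing by $k = n-\ell$ turns the sum into $\mu_s \sum_{k=0}^{n}\bigl(\binom{m}{k}_q - \binom{m}{k-1}_q\bigr)$, which telescopes to $\mu_s\bigl(\binom{m}{n}_q - \binom{m}{-1}_q\bigr) = \mu_s\binom{m}{n}_q$; restoring $s,m,n$ gives $\mu_{2r+d-N}\binom{2N-2r-d}{N-r-d}_q$, as required. I do not expect a genuine obstacle: once the invariance of $s$ and $m$ across the summation range is noticed, the whole computation is a telescoping sum. The single point that demands care is the reconciliation at the top term $\ell=n$, where the two cases of the multiplicity formula must be matched; this succeeds precisely because of the convention $\binom{m}{-1}_q = 0$ fixed after the definition of the $q$-binomial coefficients. (If $s$ exceeds $\min(N,M)$ then $\mu_s = 0$ and both sides vanish, so the identity holds trivially in that degenerate case as well.)
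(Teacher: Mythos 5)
Your proposal is correct and follows essentially the same route as the paper, whose proof is exactly this argument in compressed form: the containment is checked directly against the defining inequalities in \eqref{eq:PsiRD}, and the sum \eqref{eq:sumRD} is evaluated via Proposition \ref{prop:multForm} and simplified (your telescoping, enabled by the invariance of $2r+d-N$ and $2N-2r-d$ under $\ell \mapsto (r-\ell, d+2\ell)$ and the convention $\binom{m}{-1}_q=0$, is the intended simplification). Your careful reconciliation of the two cases of Proposition \ref{prop:multForm} at $\ell = N-r-d$ is precisely the detail the paper leaves to the reader.
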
 
 \begin{proof} The first assertion follows from \eqref{eq:PsiRD}. To verify \eqref{eq:sumRD}, evaluate the left-hand side
 using Proposition \ref{prop:multForm} and simplify the result.
 \end{proof}

 \section{The $q$-lowering matrix  $L$}

 \noindent  We continue to discuss the Attenuated Space poset $\mathcal A_q(N,M)$.  Recall the
 lowering matrix $L'$ from Definition \ref{def:RL}.
 In this section, we adjust $L'$ to get a matrix $L$ called the $q$-lowering matrix. We describe how $L$ is related to the matrices $A^*, R$ from Definitions \ref{def:As}, \ref{def:RL} respectively.
 We also describe how $L$ acts on each irreducible $T$-module.
  \medskip
  
  \noindent The following definition is motivated by \cite[Section~1]{murali}.
 \begin{definition}\label{def:RLbf} \rm We define a matrix $ L \in {\rm Mat}_X(\mathbb C)$ that has $(y,z)$-entry
\begin{align*}
L_{y,z} = \begin{cases} q^{{\rm dim}\,y}, &{\mbox{\rm if $z$  covers $y$}}; \\
                                         0, & {\mbox{\rm if $z$ does not cover $y$}}
                \end{cases}                \qquad \qquad (y, z \in X).
\end{align*}
We call $L$ the {\it $q$-lowering matrix} for $\mathcal A_q(N,M)$.
\end{definition}

\begin{lemma} \label{lem:RLybf} For $z \in X$ we have
\begin{align*}
 L \hat z=  \sum_{z \;{\rm covers}\; y} q^{{\rm dim}\,y} \hat y.
\end{align*}
\end{lemma}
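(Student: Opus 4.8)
The plan is to compute $L\hat z$ directly from the matrix definition, exactly as was done for $R$ and $L'$ in Lemma~\ref{lem:RLy}. The key observation I would use is that for any matrix $B \in {\rm Mat}_X(\mathbb C)$ and any vertex $z \in X$, the vector $B\hat z$ is simply the $z$th column of $B$; that is, its $y$-coordinate equals the $(y,z)$-entry $B_{y,z}$, so that $B\hat z = \sum_{y \in X} B_{y,z}\,\hat y$. This follows at once because $\hat z$ is the standard basis vector with $z$-coordinate $1$ and all other coordinates $0$.

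Applying this with $B = L$, I would then substitute the entries recorded in Definition~\ref{def:RLbf}. By that definition, $L_{y,z}$ vanishes unless $z$ covers $y$, in which case $L_{y,z} = q^{{\rm dim}\,y}$. Hence the sum over all $y \in X$ collapses to a sum over only those $y$ that are covered by $z$, each contributing the coefficient $q^{{\rm dim}\,y}$, which yields exactly the claimed formula
\begin{align*}
L\hat z = \sum_{z\ {\rm covers}\ y} q^{{\rm dim}\,y}\,\hat y.
\end{align*}
There is no genuine obstacle here: the statement is a routine reformulation of Definition~\ref{def:RLbf} in terms of the action on the orthonormal basis $\lbrace \hat y\rbrace_{y \in X}$, and the entire argument is a one-line unpacking of the definition, strictly parallel to the proofs of Lemmas~\ref{def:Es1} and~\ref{lem:RLy}. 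Accordingly, I expect the proof to read simply ``By Definition~\ref{def:RLbf}.''
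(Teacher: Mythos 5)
Your proposal is correct and matches the paper's proof, which is simply ``By Definition \ref{def:RLbf}''; your column-extraction argument is exactly the routine unpacking that citation stands for.
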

\begin{proof} By Definition \ref{def:RLbf}.
\end{proof}

\begin{lemma}\label{lem:LL} We have  $L'=A^* L$.
\end{lemma}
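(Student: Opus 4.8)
The plan is to verify the matrix identity $L' = A^* L$ by checking that both sides agree on the orthonormal basis $\lbrace \hat z \rbrace_{z \in X}$ of the standard module $V$. Since $\lbrace \hat z \rbrace_{z \in X}$ spans $V$, it suffices to show that $L' \hat z = A^* L \hat z$ for every $z \in X$.

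First I would expand the left-hand side using Lemma \ref{lem:RLy}, which gives
\begin{align*}
L' \hat z = \sum_{z \;{\rm covers}\; y} \hat y.
\end{align*}
Next I would expand the right-hand side. By Lemma \ref{lem:RLybf} we have $L \hat z = \sum_{z \;{\rm covers}\; y} q^{{\rm dim}\,y} \hat y$, and then applying $A^*$ and using Lemma \ref{lem:Ky} (which asserts $A^* \hat y = q^{-{\rm dim}\,y} \hat y$) gives
\begin{align*}
A^* L \hat z = \sum_{z \;{\rm covers}\; y} q^{{\rm dim}\,y} A^* \hat y = \sum_{z \;{\rm covers}\; y} q^{{\rm dim}\,y} q^{-{\rm dim}\,y} \hat y = \sum_{z \;{\rm covers}\; y} \hat y.
\end{align*}
The two expressions coincide, so $L' \hat z = A^* L \hat z$ for all $z \in X$, whence $L' = A^* L$.

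This argument is essentially a routine computation, so I do not anticipate a genuine obstacle. The only point requiring a moment of care is the interplay between the diagonal scaling factors: $L$ weights each lower cover $y$ of $z$ by $q^{{\rm dim}\,y}$, while $A^*$ rescales $\hat y$ by the reciprocal $q^{-{\rm dim}\,y}$, and these cancel exactly to recover the unweighted lowering matrix $L'$. Equivalently, one could phrase the proof entirely at the level of matrix entries using Definitions \ref{def:As}, \ref{def:RL}, and \ref{def:RLbf}, observing that $A^*$ is diagonal with $(y,y)$-entry $q^{-{\rm dim}\,y}$ so that $(A^* L)_{y,z} = q^{-{\rm dim}\,y} L_{y,z}$, which equals $q^{-{\rm dim}\,y} q^{{\rm dim}\,y} = 1$ precisely when $z$ covers $y$ and $0$ otherwise; this matches $L'_{y,z}$ exactly.
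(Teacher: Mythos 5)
Your proof is correct and follows essentially the same route as the paper: the paper's proof simply says to compare Lemma \ref{lem:RLybf} with Lemmas \ref{lem:Ky}, \ref{lem:RLy}, and you have carried out exactly that comparison, with the cancellation $q^{{\rm dim}\,y} q^{-{\rm dim}\,y}=1$ made explicit.
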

\begin{proof} Compare Lemma \ref{lem:RLybf} with Lemmas \ref{lem:Ky}, \ref{lem:RLy}.
\end{proof}

\begin{lemma} \label{lem:Lexpand} We have
\begin{align*}
&L E^*_i = E^*_{i-1} L \quad (1 \leq i \leq N), \qquad \quad  LE^*_0 =0, \qquad \quad E^*_N L=0.
\end{align*}
\end{lemma}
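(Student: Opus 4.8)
The plan is to prove Lemma \ref{lem:Lexpand} by reducing it to the already-established behavior of the combinatorial lowering matrix $L'$ under the subconstituent idempotents, via the factorization $L'=A^*L$ from Lemma \ref{lem:LL}. The key observation is that $A^*$ is a diagonal (hence invertible, by the remark after Definition \ref{def:As}) element of $\mathbf M^*$, so it commutes with every $E^*_i$; this will let me move the $A^*$ factor through the idempotents without disturbing the $E^*$-index bookkeeping. First I would recall from Lemma \ref{lem:Rexpand} that $L'E^*_i=E^*_{i-1}L'$ for $1\le i\le N$, together with $L'E^*_0=0$ and $E^*_NL'=0$.

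\begin{proof}
By Lemma \ref{lem:LL} we have $L'=A^*L$, and $A^*$ is invertible, so $L=(A^*)^{-1}L'$. Since $A^*=\sum_{j=0}^N q^{-j}E^*_j$ lies in $\mathbf M^*$, its inverse $(A^*)^{-1}=\sum_{j=0}^N q^{j}E^*_j$ also lies in $\mathbf M^*$, and in particular $(A^*)^{-1}$ commutes with each $E^*_i$ by Lemma \ref{lem:basicEs}. Now fix $i$ with $1\le i\le N$. Using $L=(A^*)^{-1}L'$, the relation $L'E^*_i=E^*_{i-1}L'$ from Lemma \ref{lem:Rexpand}, and the fact that $(A^*)^{-1}$ commutes with $E^*_{i-1}$, we compute
\begin{align*}
LE^*_i = (A^*)^{-1}L'E^*_i = (A^*)^{-1}E^*_{i-1}L' = E^*_{i-1}(A^*)^{-1}L' = E^*_{i-1}L.
\end{align*}
Next, from $L'E^*_0=0$ we get $LE^*_0=(A^*)^{-1}L'E^*_0=0$. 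Finally, from $E^*_NL'=0$ and the fact that $(A^*)^{-1}$ commutes with $E^*_N$, we obtain $E^*_NL=E^*_N(A^*)^{-1}L'=(A^*)^{-1}E^*_NL'=0$.
\end{proof}

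There is essentially no obstacle here: once the factorization $L=(A^*)^{-1}L'$ is in hand and one notes that $(A^*)^{-1}\in\mathbf M^*$ commutes with the idempotents, the three equations follow by substitution. An alternative, equally routine route bypasses $L'$ entirely: apply each claimed identity to a basis vector $\hat z$ and evaluate both sides using Lemmas \ref{def:Es1} and \ref{lem:RLybf}, noting that if $z$ covers $y$ then $\dim y=\dim z-1$, so $L$ sends $E^*_iV$ into $E^*_{i-1}V$ exactly as $L'$ does. I would favor the factorization argument since it is shorter and reuses Lemma \ref{lem:Rexpand} directly; the only point requiring a word of care is the invertibility and $\mathbf M^*$-membership of $(A^*)^{-1}$, which I address explicitly above.
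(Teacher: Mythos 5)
Your proof is correct and takes exactly the paper's route: the paper's own proof is the one-line instruction ``Use Lemmas \ref{lem:Rexpand}, \ref{lem:LL},'' and your factorization argument $L=(A^*)^{-1}L'$, combined with the fact that $(A^*)^{-1}\in\mathbf M^*$ commutes with each $E^*_i$, is precisely the intended elaboration of those two lemmas. The details you supply (invertibility of $A^*$ and the commutation with the idempotents) are all sound.
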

\begin{proof} Use Lemmas \ref{lem:Rexpand}, \ref{lem:LL}.
\end{proof}

\begin{lemma} \label{lem:Laction2}  We have $LE^*_iV \subseteq E^*_{i-1} V $ for $1 \leq i \leq N$, and $L E^*_0V=0$.
\end{lemma}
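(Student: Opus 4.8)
The plan is to prove the final statement, namely that $L E^*_i V \subseteq E^*_{i-1} V$ for $1 \leq i \leq N$ and $L E^*_0 V = 0$, as an immediate consequence of the operator identity already recorded in Lemma~\ref{lem:Lexpand}. This is the same bookkeeping that produced Lemma~\ref{lem:RLaction} from Lemma~\ref{lem:Rexpand}, so I expect no genuine difficulty; the only ``obstacle'' is making sure the index arithmetic is handled cleanly at the boundary $i=0$.

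First I would treat the generic range $1 \leq i \leq N$. By Lemma~\ref{lem:Lexpand} we have $L E^*_i = E^*_{i-1} L$. Applying both sides to an arbitrary vector $v \in E^*_i V$ gives $L v = L E^*_i v = E^*_{i-1}(L v)$, and since $E^*_{i-1}$ is the projection onto $E^*_{i-1}V$ (its range is exactly $E^*_{i-1}V$), any vector fixed by $E^*_{i-1}$ lies in $E^*_{i-1}V$. Hence $Lv \in E^*_{i-1}V$, which yields $L E^*_i V \subseteq E^*_{i-1}V$.

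Next I would dispose of the boundary case $i=0$. Lemma~\ref{lem:Lexpand} records $L E^*_0 = 0$, so for every $v \in E^*_0 V$ we have $Lv = L E^*_0 v = 0$, giving $L E^*_0 V = 0$ directly. This completes the proof. Since every step is a one-line deduction from Lemma~\ref{lem:Lexpand}, I would simply cite that lemma in the written proof, in exact parallel with the proof of Lemma~\ref{lem:RLaction}, rather than reproduce the elementary projection argument in full.
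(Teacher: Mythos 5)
Your proof is correct and matches the paper's own argument: the paper proves Lemma \ref{lem:Laction2} simply by citing Lemma \ref{lem:RLybf} or Lemma \ref{lem:Lexpand}, and you have taken the second of these routes, spelling out the projection bookkeeping that the paper leaves implicit. Nothing further is needed.
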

\begin{proof} By Lemma  \ref{lem:RLybf} or \ref{lem:Lexpand}.
\end{proof}

\begin{lemma} We have $L^{N+1}=0$.
\end{lemma}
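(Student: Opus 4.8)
The plan is to show $L^{N+1}=0$ by leveraging the factorization $L'=A^* L$ from Lemma \ref{lem:LL} together with the nilpotency $(L')^{N+1}=0$ already established. Since $A^*$ is invertible (noted just after Definition \ref{def:As}) and commutes with the $E^*_i$, I expect the cleanest route is to relate a power of $L$ to the corresponding power of $L'$ and then invoke $(L')^{N+1}=0$. The main subtlety is that $L$ and $A^*$ do not commute, so I cannot simply write $L^{N+1}=(A^*)^{-(N+1)}(L')^{N+1}$; instead I must track how $A^*$ moves past each factor of $L$.

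The key commutation fact I would extract first is a relation between $L$ and $A^*$. Applying $A^* L$ and $L A^*$ to a basis vector $\hat z$ using Lemmas \ref{lem:Ky}, \ref{lem:RLybf}, I find $A^* L \hat z = \sum_{z\,\text{covers}\,y} q^{-\dim y} q^{\dim y}\hat y = L'\hat z$ while $L A^* \hat z = q^{-\dim z} L \hat z = \sum_{z\,\text{covers}\,y} q^{-\dim z} q^{\dim y}\hat y$; since $\dim z = \dim y +1$ whenever $z$ covers $y$, this gives $L A^* = q^{-1} A^* L$, equivalently $A^* L = q\, L A^*$. (Alternatively this is immediate from $L' A^* = q^{-1} A^* L'$ in \eqref{eq:RLK1} combined with $L'=A^*L$.) With the relation $A^* L = q\, L A^*$ in hand, a routine induction pushes powers of $A^*$ through powers of $L$.

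The core computation is then to show $(L')^{N+1} = c\,(A^*)^{N+1} L^{N+1}$ for some nonzero scalar $c$, whence $L^{N+1}=0$ follows by invertibility of $A^*$. I would establish this by induction on the exponent: writing $(A^* L)^n$ and repeatedly applying $A^* L = q\, L A^*$ to collect all $A^*$ factors on the left, each swap of an $A^*$ past an $L$ contributes a factor of $q$. Concretely, moving the $A^*$ in the $k$th position leftward past $k-1$ copies of $L$ yields $(A^* L)^n = q^{\binom{n}{2}} (A^*)^n L^n$. Setting $n=N+1$ gives $(L')^{N+1} = (A^* L)^{N+1} = q^{\binom{N+1}{2}} (A^*)^{N+1} L^{N+1}$.

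The easiest path to the stated lemma is therefore: first record $(L')^{N+1}=0$ (already proved in the excerpt), then use the identity $(L')^{N+1}=q^{\binom{N+1}{2}}(A^*)^{N+1}L^{N+1}$, and finally cancel the invertible factors $q^{\binom{N+1}{2}}$ and $(A^*)^{N+1}$ to conclude $L^{N+1}=0$. I do not anticipate a genuine obstacle here; the only point requiring care is the bookkeeping of the power of $q$ when commuting $A^*$ past the string of $L$'s, but since we merely need the coefficient to be nonzero, even an imprecise (but nonzero) scalar suffices. In fact, one can sidestep the commutation entirely by applying $L^{N+1}$ to an arbitrary $\hat z$ and using Lemma \ref{lem:Laction2}, which shows $L$ lowers the subconstituent index by one, so that $L^{N+1}$ maps every subconstituent $E^*_iV$ (with $i\le N$) to $0$; combined with $V=\sum_{i=0}^N E^*_iV$ from Lemma \ref{lem:c1}, this gives $L^{N+1}V=0$ directly. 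I would likely present this second argument as the proof, since it parallels the proofs of the analogous nilpotency statements for $R$ and $L'$ and requires no commutation bookkeeping.
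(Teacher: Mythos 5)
Your proposal is correct, and the argument you ultimately choose to present---that $L$ lowers the subconstituent index by one (Lemma \ref{lem:Laction2}), so $L^{N+1}$ annihilates every $E^*_iV$ and hence all of $V=\sum_{i=0}^N E^*_iV$---is exactly the paper's proof, which reads simply ``By Lemma \ref{lem:Laction2}.'' Your alternative commutation route is also valid, though the scalar should be $q^{-\binom{N+1}{2}}$ rather than $q^{\binom{N+1}{2}}$ (collecting the $A^*$ factors on the left uses $LA^*=q^{-1}A^*L$); as you observe, only its nonvanishing matters, so this does not affect the conclusion.
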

\begin{proof} By Lemma \ref{lem:Laction2}.
\end{proof}

 \begin{lemma} \label{lem:SL} We have $SL=-LS$.
 \end{lemma}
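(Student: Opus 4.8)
The plan is to prove $SL=-LS$ by reducing it to relations already established for $S$, $A^*$, and $L'$. Recall from Lemma \ref{lem:LL} that $L'=A^*L$, and since $A^*$ is invertible (as noted after Definition \ref{def:As}), we have $L=(A^*)^{-1}L'$. The strategy is therefore to express $S$ and $L$ entirely in terms of $A^*$ and $L'$, and then invoke the commutation relation $SA^*=A^*S$ from Lemma \ref{lem:Stwo}(ii) together with the anticommutation relation $SL'=-L'S$ from Lemma \ref{lem:SRL}.

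First I would write $SL = S(A^*)^{-1}L'$. Since $S$ commutes with $A^*$ by Lemma \ref{lem:Stwo}(ii), and $S$ is invertible with $S^2=I$ by Lemma \ref{lem:Stwo}(i), it follows that $S$ also commutes with $(A^*)^{-1}$: indeed, from $SA^*=A^*S$ we get $(A^*)^{-1}S = S(A^*)^{-1}$ by multiplying on both sides by $(A^*)^{-1}$. Hence $SL = (A^*)^{-1}SL'$. Next, applying $SL'=-L'S$ from Lemma \ref{lem:SRL} gives $SL = (A^*)^{-1}(-L'S) = -(A^*)^{-1}L'S = -LS$, where the last equality again uses $L=(A^*)^{-1}L'$. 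This chain of substitutions completes the proof.

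Alternatively, and perhaps more in the spirit of the preceding lemmas, I could argue directly from the action on the basis and the decomposition $S=\sum_{i=0}^N(-1)^iE^*_i$ from Definition \ref{def:Smat}. Using $LE^*_i=E^*_{i-1}L$ from Lemma \ref{lem:Lexpand}, I would compute $SL=\sum_i(-1)^iE^*_iL=\sum_i(-1)^iLE^*_{i+1}=L\sum_i(-1)^iE^*_{i+1}$. Reindexing the sum and comparing with $LS=L\sum_j(-1)^jE^*_j$, the factor $(-1)^{i+1}=-(-1)^i$ produces exactly the sign change needed to yield $SL=-LS$. This mirrors the proof of Lemma \ref{lem:SRL}, where $S$ was eliminated using \eqref{eq:Sdef} and the result evaluated with Lemma \ref{lem:Rexpand}.

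I do not anticipate a genuine obstacle here, as the result is an immediate formal consequence of already-established relations; the only point requiring minor care is justifying that $S$ commutes with $(A^*)^{-1}$ (not merely with $A^*$) in the first approach, which follows routinely from invertibility. The second approach sidesteps this entirely by working directly with the spectral decomposition, so I would likely present that one for uniformity with the proof of Lemma \ref{lem:SRL}.
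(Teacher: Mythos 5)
Your first argument is exactly the paper's proof: the paper justifies $SL=-LS$ by citing precisely the three facts you chain together, namely $SA^*=A^*S$ (Lemma \ref{lem:Stwo}(ii)), $SL'=-L'S$ (Lemma \ref{lem:SRL}), and $L'=A^*L$ (Lemma \ref{lem:LL}), with invertibility of $A^*$ doing the bookkeeping. Your alternative second argument via $S=\sum_{i=0}^N(-1)^iE^*_i$ and Lemma \ref{lem:Lexpand} is also valid and mirrors the paper's proof of Lemma \ref{lem:SRL}, but as written it glosses over the boundary terms: in passing from $\sum_i(-1)^iE^*_iL$ to $\sum_i(-1)^iLE^*_{i+1}$ you must use $E^*_NL=0$ to drop the $i=N$ term, and when comparing with $LS$ you must use $LE^*_0=0$ to drop the $j=0$ term; with those two observations the sign bookkeeping goes through and the conclusion holds.
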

 \begin{proof} By Lemmas  \ref{lem:Stwo}(ii),  \ref{lem:SRL}, \ref{lem:LL}.
  \end{proof}

 \begin{proposition} \label{lem:RLAs} The matrices $R,L, A^*$ satisfy the following relations:
 \begin{align*}
 &RA^*=q A^* R, \qquad \qquad L A^*=q^{-1}  A^* L, \\
  & L^2 R - q(q+1) LRL + q^3 R L^2 = - q^{N+M}(q+1) L,\\
 & LR^2 - q(q+1) RLR + q^3 R^2 L = - q^{N+M} (q+1)R.
 \end{align*}
 \end{proposition}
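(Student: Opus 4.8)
The plan is to derive all four relations from Lemma~\ref{lem:RLK}, which records the corresponding relations among $R,L',A^*$, by substituting the identity $L'=A^*L$ from Lemma~\ref{lem:LL} and exploiting the invertibility of $A^*$.

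First I would handle the two quadratic relations. The relation $RA^*=qA^*R$ is identical to the first relation in Lemma~\ref{lem:RLK}, so nothing is needed. For $LA^*=q^{-1}A^*L$, substitute $L'=A^*L$ into $L'A^*=q^{-1}A^*L'$ to get $A^*LA^*=q^{-1}(A^*)^2L$, and cancel the invertible factor $A^*$ from the left.

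Next, the two cubic relations. I would rewrite the commutation rules in the convenient forms $A^*R=q^{-1}RA^*$ and $A^*L=qLA^*$, and then substitute $L'=A^*L$ into the cubic relations of Lemma~\ref{lem:RLK}, moving every occurrence of $A^*$ to the far left. For the first cubic relation, one finds $(L')^2=q^{-1}(A^*)^2L^2$, $L'RL'=(A^*)^2LRL$, and $R(L')^2=q(A^*)^2RL^2$, so its left-hand side equals $(A^*)^2\bigl(q^{-1}L^2R-(q+1)LRL+q^2RL^2\bigr)$, while its right-hand side $-(q+1)q^{N+M}L'A^*$ equals $-(q+1)q^{N+M-1}(A^*)^2L$. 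Cancelling $(A^*)^2$ and multiplying through by $q$ gives precisely $L^2R-q(q+1)LRL+q^3RL^2=-(q+1)q^{N+M}L$. The remaining cubic relation follows by the identical manipulation applied to the third relation of Lemma~\ref{lem:RLK}.

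The only delicate point is the bookkeeping of the powers of $q$: each time $A^*$ is commuted past an $R$ it picks up a factor $q^{-1}$, and each time past an $L$ a factor $q$, so one must track these carefully to confirm that a common scalar multiple of $(A^*)^2$ really does factor out of both sides. Beyond this accounting the argument is a routine cancellation, so I do not expect any genuine obstacle.
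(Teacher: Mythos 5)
Your proposal is correct and takes exactly the paper's route: the paper's proof is the one-line instruction to eliminate $L'$ from Lemma~\ref{lem:RLK} using $L'=A^*L$ (Lemma~\ref{lem:LL}), and your computation simply carries out that substitution in detail, with the powers of $q$ tracked correctly (the only minor asymmetry being that the third relation yields a single left factor of $A^*$ rather than $(A^*)^2$, which your ``identical manipulation'' remark covers).
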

 \begin{proof} In Lemma \ref{lem:RLK}, eliminate $L'$ using Lemma \ref{lem:LL}.
 \end{proof}
 
 \begin{remark}\rm The last two relations in Proposition \ref{lem:RLAs} 
 are a special case of the down-up relations \cite[p.~308]{benkart}.
 \end{remark}
 
 \begin{proposition} \label{prop:TRLAs} The algebra $T$ is generated by $R,L, A^*$.
 \end{proposition}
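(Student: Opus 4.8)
The algebra $T$ is generated by $R, L, A^*$.

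Let me think about this.The plan is to compare the subalgebra generated by $R, L, A^*$ with the subalgebra $T$, which by Definition \ref{def:SubT} is generated by $R, L', A^*$. Write $T'$ for the subalgebra generated by $R, L, A^*$; I would establish the two inclusions $T \subseteq T'$ and $T' \subseteq T$ separately, using as a bridge the identity $L' = A^* L$ from Lemma \ref{lem:LL}.

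The inclusion $T \subseteq T'$ is immediate: the generators $R$ and $A^*$ of $T$ lie in $T'$ by construction, while $L' = A^* L$ is a product of two elements of $T'$ and hence lies in $T'$; thus every generator of $T$ belongs to $T'$. For the reverse inclusion $T' \subseteq T$, the only generator requiring attention is $L$, since $R, A^* \in T$ trivially. Here I would solve the relation $L' = A^* L$ for $L$. The matrix $A^*$ is invertible (as noted following Definition \ref{def:As}), and its inverse already lies in $T$: by Lemma \ref{lem:AsE} the algebra generated by $A^*$ is ${\bf M}^*$, which contains every $E^*_i$, so that $(A^*)^{-1} = \sum_{i=0}^N q^i E^*_i$ belongs to this algebra and hence to $T$. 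Consequently $L = (A^*)^{-1} L' \in T$, which gives $T' \subseteq T$.

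Combining the two inclusions yields $T = T'$, as desired. The argument is purely formal, so I anticipate no substantial obstacle; the only point needing any care is the membership $(A^*)^{-1} \in T$, which follows routinely from the invertibility of $A^*$ together with the fact that the spectral idempotents $E^*_i$ are polynomials in $A^*$.
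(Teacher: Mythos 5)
Your argument is correct and matches the paper's proof, which cites exactly the same three ingredients: Definition \ref{def:SubT}, the identity $L'=A^*L$ from Lemma \ref{lem:LL}, and the invertibility of $A^*$ (with $(A^*)^{-1}\in T$ since the $E^*_i$ are polynomials in $A^*$). You have simply spelled out the two inclusions that the paper leaves implicit.
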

 \begin{proof} By Definition \ref{def:SubT}, Lemma \ref{lem:LL}, and since $A^*$ is invertible.
 \end{proof}

 \noindent Next, we describe how $L$ acts on each irreducible $T$-module. In this description, we will use the following variation on Definition \ref{def:xip}.
 
 \begin{definition} \label{def:xi} \rm For $(r,d) \in \Psi$ define
 \begin{align*}
 \xi_{i} (r,d) = \frac{ q^{N+M-d} (q^i-1)(q^d-q^{i-1})}{(q-1)^2} \qquad \qquad (1 \leq i \leq d).
 \end{align*}
 \end{definition}
 \noindent Comparing  Definitions \ref{def:xip}, \ref{def:xi} we obtain $\xi_i(r,d) = q^{r+i-1} \xi'_i(r,d)$ for $1 \leq i \leq d$.

 \begin{proposition} \label{prop:LW} Let $W$ denote an irreducible $T$-module, with endpoint $r$ and diameter $d$.
 Recall the basis $\lbrace w_i \rbrace_{i=0}^d$ of $W$ from Proposition \ref{lem:wbasis}. Then
 \begin{align*}
 Lw_i = \xi_{i} (r,d) w_{i-1} \qquad (1 \leq i \leq d), \qquad \quad L w_0=0.
 \end{align*}
 \end{proposition}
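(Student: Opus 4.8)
The plan is to derive the action of $L$ from the already-known action of $L'$ by exploiting the factorization $L' = A^* L$ from Lemma \ref{lem:LL}. Since $A^*$ is invertible (as noted after Definition \ref{def:As}), this is equivalent to $L = (A^*)^{-1} L'$, and I would compute $L w_i$ by applying $L'$ first and then inverting the $A^*$-action.

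Concretely, fix $1 \leq i \leq d$. By Proposition \ref{lem:wbasis}(iii) we have $L' w_i = \xi'_i(r,d)\, w_{i-1}$, and by Lemma \ref{lem:AsW} the vector $w_{i-1}$ is an eigenvector of $A^*$ with eigenvalue $q^{-r-(i-1)} = q^{-r-i+1}$. Therefore
\begin{align*}
A^* L w_i = L' w_i = \xi'_i(r,d)\, w_{i-1} = \xi'_i(r,d)\, q^{r+i-1} A^* w_{i-1}.
\end{align*}
Applying $(A^*)^{-1}$ to both ends gives $L w_i = q^{r+i-1} \xi'_i(r,d)\, w_{i-1}$. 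The remark following Definition \ref{def:xi} records exactly that $\xi_i(r,d) = q^{r+i-1}\xi'_i(r,d)$, so this reads $L w_i = \xi_i(r,d)\, w_{i-1}$, as desired. For the boundary case, $L' w_0 = 0$ from Proposition \ref{lem:wbasis}(iii) together with $L' = A^* L$ and invertibility of $A^*$ forces $L w_0 = 0$.

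I do not expect any serious obstacle here; the content is entirely bookkeeping once the factorization $L' = A^* L$ is in hand. The only point requiring a moment's care is keeping the exponent on $q$ straight: $L$ lowers the subconstituent index by one (Lemma \ref{lem:Laction2}), so $L w_i$ lands in the span of $w_{i-1}$, whose $A^*$-eigenvalue is $q^{-r-i+1}$ rather than $q^{-r-i}$, and it is precisely this shift that produces the factor $q^{r+i-1}$ matching the relation between $\xi_i$ and $\xi'_i$. An equivalent and perhaps cleaner route is to first prove the general identity $L = (A^*)^{-1} L'$ and then observe that on the eigenspace $E^*_{r+i-1} V$ containing $w_{i-1}$ the operator $(A^*)^{-1}$ acts as the scalar $q^{r+i-1}$; this isolates the single scalar computation and makes the appeal to the $\xi_i = q^{r+i-1}\xi'_i$ relation transparent.
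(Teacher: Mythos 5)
Your proposal is correct and follows the same route as the paper's proof: eliminate $L'$ via the factorization $L' = A^* L$, use the $A^*$-eigenvalue $q^{-r-i+1}$ of $w_{i-1}$ (the paper cites Lemma \ref{lem:AsES}, equivalently Lemma \ref{lem:AsW}), and conclude with the identity $\xi_i(r,d) = q^{r+i-1}\xi'_i(r,d)$. The paper's proof is just a terse one-line version of exactly this bookkeeping, so there is nothing to add.
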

 \begin{proof}  Eliminate $L'$ in Proposition \ref{lem:wbasis}(iii) using $L'=A^* L$, and evaluate the result using Lemma \ref{lem:AsES} along with
  $\xi_i(r,d) = q^{r+i-1} \xi'_i(r,d)$ for $1 \leq i \leq d$.
 \end{proof}

 \section{The $q$-adjacency matrix $A$ and its relationship to $A^*$}
  \noindent  We continue to discuss the Attenuated Space poset $\mathcal A_q(N,M)$.  In this section, we  introduce the $q$-adjacency matrix $A$ and describe
  its relationship to $A^*$.
 \medskip
 
  \noindent The following definition is motivated by \cite[Section~1]{murali}.

 \begin{definition}\label{def:A} \rm Define a matrix $A \in {\rm Mat}_X(\mathbb C)$ that has $(y,z)$-entry
\begin{align*}
A_{y,z} = \begin{cases} 1 &{\mbox{\rm if $y$  covers $z$}}; \\
                                      q^{{\rm dim}\,y} &{\mbox{\rm if $z$ covers $y$}}; \\
                                         0 & {\mbox{\rm if $y,z$ are not adjacent}}
                \end{cases} 
                \qquad \qquad (y, z \in X).
\end{align*} We call $A$ the {\it $q$-adjacency matrix} for $\mathcal A_q(N,M)$.
\end{definition}

\noindent Note that $A$ is a weighted adjacency matrix for $\mathcal A_q(N,M)$, in the sense of \cite[Definition~2.1]{Lnq}. Next, we clarify how $A$ is related to $L$ and $R$.

\begin{lemma} \label{lem:Ay} We have $A=R+L$. Moreover
\begin{align} \label{eq:RLexpand} 
R = \sum_{i=0}^{N-1} E^*_{i+1} A E^*_i, \qquad \qquad L = \sum_{i=1}^N  E^*_{i-1} A E^*_i.
\end{align}
\end{lemma}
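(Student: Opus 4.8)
The plan is to prove the lemma by directly comparing matrix entries, exploiting the defining formulas for $A$, $R$, and $L$ together with the entry-wise action of the $E^*_i$ on $\hat y$. First I would establish the decomposition $A = R + L$. Fix $y, z \in X$ and examine the $(y,z)$-entry of both sides. By Definition \ref{def:A}, $A_{y,z}$ is $1$ when $y$ covers $z$, is $q^{{\rm dim}\,y}$ when $z$ covers $y$, and is $0$ otherwise. By Definition \ref{def:RL}, $R_{y,z} = 1$ exactly when $y$ covers $z$, and by Definition \ref{def:RLbf}, $L_{y,z} = q^{{\rm dim}\,y}$ exactly when $z$ covers $y$. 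Since $y$ cannot simultaneously cover $z$ and be covered by $z$, in each of the three cases exactly one of $R_{y,z}$, $L_{y,z}$ is nonzero (or both vanish when $y,z$ are not adjacent), and the value matches $A_{y,z}$. Hence $(R+L)_{y,z} = A_{y,z}$ for all $y,z$, giving $A = R + L$.

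Next I would verify the two displayed identities in \eqref{eq:RLexpand}. The cleanest route is to use the grading of $V$ by the subconstituents $E^*_iV$ recorded in Lemma \ref{lem:c1}, together with the raising/lowering behavior from Lemmas \ref{lem:Rexpand} and \ref{lem:Lexpand}. Since $I = \sum_{i=0}^N E^*_i$ by Lemma \ref{lem:basicEs}, I can write $A = \sum_{i,j} E^*_j A E^*_i$. The strategy is then to show that $E^*_j A E^*_i$ vanishes unless $j = i+1$ or $j = i-1$, and to identify the surviving blocks with $R$ and $L$ respectively.

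For the identification, I would argue that $R$ raises degree by one and $L$ lowers it by one in the following precise sense: from Lemma \ref{lem:Rexpand} we have $E^*_j R E^*_i = \delta_{j,i+1} R E^*_i$, and from Lemma \ref{lem:Lexpand} we have $E^*_j L E^*_i = \delta_{j,i-1} L E^*_i$. Combining these with $A = R + L$ gives $E^*_j A E^*_i = \delta_{j,i+1} R E^*_i + \delta_{j,i-1} L E^*_i$. Summing the off-diagonal-by-one blocks, $\sum_{i=0}^{N-1} E^*_{i+1} A E^*_i = \sum_{i=0}^{N-1} R E^*_i = R\sum_{i=0}^{N-1} E^*_i = R$, where the last step uses $R E^*_N = 0$ from Lemma \ref{lem:Rexpand} together with $\sum_{i=0}^N E^*_i = I$. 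Symmetrically, $\sum_{i=1}^N E^*_{i-1} A E^*_i = \sum_{i=1}^N L E^*_i = L$, using $L E^*_0 = 0$ from Lemma \ref{lem:Lexpand}. This yields both formulas in \eqref{eq:RLexpand}.

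I do not expect a serious obstacle here; the lemma is essentially bookkeeping with the grading. The one point requiring slight care is making sure the telescoping of the $E^*_i$ sums is correct at the boundary indices, namely that the extra terms $R E^*_N$ and $L E^*_0$ that one would need to complete $\sum_{i=0}^{N} E^*_i = I$ are exactly the ones that vanish by Lemmas \ref{lem:Rexpand} and \ref{lem:Lexpand}. Once that alignment is observed, the decomposition $A = R + L$ and both block expansions follow immediately.
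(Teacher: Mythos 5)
Your proposal is correct and follows essentially the same route as the paper: the identity $A=R+L$ by entrywise comparison of the definitions, then verification of \eqref{eq:RLexpand} by substituting $A=R+L$ and evaluating with Lemmas \ref{lem:basicEs}, \ref{lem:Rexpand}, \ref{lem:Lexpand}, including the boundary vanishing $RE^*_N=0$ and $LE^*_0=0$. Your write-up just makes explicit the block computation $E^*_jAE^*_i=\delta_{j,i+1}RE^*_i+\delta_{j,i-1}LE^*_i$ that the paper leaves implicit.
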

\begin{proof} The first assertion is from Definition \ref{def:A}. To verify \eqref{eq:RLexpand}, eliminate $A$ using $A=R+L$ and evaluate the
results using Lemmas \ref{lem:basicEs}, \ref{lem:Rexpand}, \ref{lem:Lexpand}.
\end{proof}

\begin{lemma} \label{lem:AsTD} We have
\begin{align*}
&A E^*_iV \subseteq E^*_{i-1} V + E^*_{i+1}V \qquad (1 \leq i \leq N-1), \\
& A E^*_0V \subseteq E^*_1V, \qquad \qquad A E^*_NV \subseteq E^*_{N-1}V.
\end{align*}
\end{lemma}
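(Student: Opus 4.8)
The plan is to establish the claimed containments directly from the decomposition $A = R + L$ obtained in Lemma \ref{lem:Ay}, together with the action of $R$ and $L$ on the subconstituents that was recorded in Lemmas \ref{lem:RLaction} and \ref{lem:Laction2}. The point is that $A$ is a sum of a raising part and a lowering part, each of which shifts a subconstituent by exactly one step, so their sum lands in the two neighboring subconstituents.

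First I would fix $i$ with $1 \leq i \leq N-1$ and write $A E^*_i V = (R+L) E^*_i V \subseteq R E^*_i V + L E^*_i V$. By Lemma \ref{lem:RLaction} we have $R E^*_i V \subseteq E^*_{i+1} V$, and by Lemma \ref{lem:Laction2} we have $L E^*_i V \subseteq E^*_{i-1} V$. Adding these gives $A E^*_i V \subseteq E^*_{i-1} V + E^*_{i+1} V$, which is the first asserted inclusion.

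Next I would treat the two boundary cases separately. For $i = 0$, Lemma \ref{lem:Laction2} gives $L E^*_0 V = 0$, so $A E^*_0 V = R E^*_0 V \subseteq E^*_1 V$ by Lemma \ref{lem:RLaction}. For $i = N$, Lemma \ref{lem:RLaction} gives $R E^*_N V = 0$, so $A E^*_N V = L E^*_N V \subseteq E^*_{N-1} V$ by Lemma \ref{lem:Laction2}. This handles the remaining two inclusions.

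There is essentially no obstacle here: the lemma is a formal consequence of the splitting $A = R + L$ and the one-step raising/lowering behavior already proved. If one prefers to avoid even citing $A = R+L$, the same conclusion follows by applying $A$ to a basis vector $\hat z$ with ${\rm dim}\,z = i$ and using Definition \ref{def:A}, since the nonzero entries of $A \hat z$ occur only at vertices $y$ adjacent to $z$, which have dimension $i-1$ or $i+1$; but routing everything through Lemma \ref{lem:Ay} keeps the argument shortest.
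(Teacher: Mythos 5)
Your proposal is correct and follows exactly the paper's own argument: the paper proves this lemma by citing $A=R+L$ together with Lemmas \ref{lem:RLaction} and \ref{lem:Laction2}, which is precisely your route, only written out in more detail. No gaps.
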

\begin{proof} By $A=R+L$ and Lemmas  \ref{lem:RLaction},  \ref{lem:Laction2}.
\end{proof}

\begin{lemma} \label{lem:SAAS}  We have $SA=-AS$.
\end{lemma}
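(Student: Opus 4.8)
The plan is to prove $SA=-AS$ by decomposing $A$ into its raising and lowering parts and invoking the anticommutation relations already established for those parts. By Lemma \ref{lem:Ay} we have $A=R+L$, so it suffices to show that $S$ anticommutes with $R$ and with $L$ separately; adding the two resulting identities then gives $SA = SR+SL = -RS-LS = -(R+L)S = -AS$. This reduces the statement to two facts that are already available in the excerpt.

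The first fact, $SR=-RS$, is exactly the first relation in Lemma \ref{lem:SRL}. The second fact, $SL=-LS$, is precisely the content of Lemma \ref{lem:SL}. Thus the entire argument is a one-line assembly: quote Lemma \ref{lem:Ay} to write $A=R+L$, then quote Lemmas \ref{lem:SRL} and \ref{lem:SL} for the anticommutation of $S$ with $R$ and $L$ respectively, and combine linearly.

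I do not anticipate any genuine obstacle here, since all the needed ingredients are prior results. The only thing to be careful about is bookkeeping of signs when distributing $S$ across the sum $R+L$, but this is routine. Should one wish to avoid leaning on Lemma \ref{lem:SL}, an alternative is to argue directly from the eigenspace grading: $S$ acts as the scalar $(-1)^i$ on $E^*_iV$ by \eqref{eq:Sdef} and Lemma \ref{lem:basicEs}, while $A$ shifts the subconstituent index by $\pm 1$ according to Lemma \ref{lem:AsTD}, so $A$ sends an $S$-eigenvector of eigenvalue $(-1)^i$ into a sum of $S$-eigenvectors of eigenvalue $(-1)^{i\pm 1}=-(-1)^i$; this immediately yields $SA=-AS$. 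The first, algebraic assembly is cleaner, so I would present that one.

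\begin{proof} By Lemma \ref{lem:Ay} we have $A=R+L$. Using this together with Lemmas \ref{lem:SRL} and \ref{lem:SL},
\begin{align*}
SA = SR+SL = -RS-LS = -(R+L)S = -AS.
\end{align*}
\end{proof}
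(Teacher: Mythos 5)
Your proof is correct and is essentially identical to the paper's own argument: the paper also writes $A=R+L$ and combines $SR=-RS$ with $SL=-LS$ to conclude $SA=-AS$. Nothing further is needed.
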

\begin{proof} We have $A=R+L$ and $SR=-RS$ and $SL=-LS$.
\end{proof}

\begin{lemma} The subconstituent algebra $T$ is generated by $A, A^*$.
\end{lemma}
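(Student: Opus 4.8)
The goal is to show that the subconstituent algebra $T$, which Definition \ref{def:SubT} introduces as the subalgebra of ${\rm Mat}_X(\mathbb C)$ generated by $R, L', A^*$, is in fact generated by the two matrices $A, A^*$. The plan is to verify the two inclusions between the generated subalgebras. Let $T'$ denote the subalgebra generated by $A, A^*$; I must show $T' = T$.

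First I would establish $T' \subseteq T$. Since $A = R + L$ by Lemma \ref{lem:Ay} and $L = (A^*)^{-1} L'$ by Lemma \ref{lem:LL} (using that $A^*$ is invertible), both generators $A$ and $A^*$ of $T'$ lie in $T$; hence every product and linear combination of them does too, giving $T' \subseteq T$.

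The reverse inclusion $T \subseteq T'$ is the step where the real content lies. It suffices to show that the generators $R, L', A^*$ of $T$ all lie in $T'$. Since $A^*$ is itself a generator of $T'$, and $L' = A^* L$, it is enough to recover $R$ and $L$ from $A$ and $A^*$. The key tool is the expansion \eqref{eq:RLexpand} of Lemma \ref{lem:Ay}, which writes $R = \sum_{i=0}^{N-1} E^*_{i+1} A E^*_i$ and $L = \sum_{i=1}^N E^*_{i-1} A E^*_i$ in terms of $A$ and the primitive idempotents $E^*_i$. So the crux is to show that each $E^*_i$ lies in $T'$. This holds because, by Lemma \ref{lem:AsE}, $A^* = \sum_{i=0}^N q^{-i} E^*_i$ with the eigenvalues $\{q^{-i}\}_{i=0}^N$ mutually distinct; consequently each $E^*_i$ is the primitive idempotent of $A^*$ for the eigenvalue $q^{-i}$ and is therefore a polynomial in $A^*$ (obtained via the Lagrange interpolation formula $E^*_i = \prod_{j \ne i} (A^* - q^{-j} I)/(q^{-i} - q^{-j})$). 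Hence $E^*_i \in T'$ for $0 \le i \le N$, so the products $E^*_{i+1} A E^*_i$ and $E^*_{i-1} A E^*_i$ lie in $T'$, and therefore $R, L \in T'$ by \eqref{eq:RLexpand}. Finally $L' = A^* L \in T'$, so all generators of $T$ lie in $T'$, giving $T \subseteq T'$.

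I do not expect any genuine obstacle here; the argument is a standard generation-of-subalgebra verification, and the only point requiring care is the observation that the $E^*_i$ are polynomials in $A^*$, which is immediate from the distinctness of the eigenvalues of $A^*$ recorded in Lemma \ref{lem:AsE}. Combining the two inclusions yields $T = T'$, as desired.
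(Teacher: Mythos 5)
Your proof is correct and follows essentially the same route as the paper: the paper's proof likewise combines the switch of generators from $R,L',A^*$ to $R,L,A^*$ (via $L'=A^*L$ and the invertibility of $A^*$, i.e.\ Proposition \ref{prop:TRLAs}), the expansion \eqref{eq:RLexpand} of Lemma \ref{lem:Ay}, and the fact that each $E^*_i$ is a polynomial in $A^*$. You have simply written out in full the interpolation argument and the two inclusions that the paper leaves implicit.
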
 
\begin{proof} By Proposition \ref{prop:TRLAs} and Lemma \ref{lem:Ay}, along with the fact that $E^*_i$ is a polynomial in $A^*$ for $0 \leq i \leq N$.
\end{proof}

\noindent Next, we describe how $A, A^*$ are related. For notational convenience, define
\begin{align}
\beta = q + q^{-1}.
\label{eq:beta}
\end{align}

\begin{proposition} \label{prop:TD1} 
The matrices $A$ and $A^*$ satisfy
\begin{align*}
&A^3 A^* - (\beta+ 1) A^2 A^* A + (\beta+1) A A^* A^2 - A^* A^3\\
& \qquad \qquad \qquad  = q^{N+M-2} (q+1)^2 (A A^*-A^*A), \\
& (A^*)^2 A - \beta A^* A A^*  + A (A^*)^2 = 0.
\end{align*}
\end{proposition}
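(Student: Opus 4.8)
The plan is to derive both relations from Proposition~\ref{lem:RLAs} after writing $A=R+L$ (Lemma~\ref{lem:Ay}). Throughout I would use the $q$-commutation relations $RA^{*}=qA^{*}R$ and $LA^{*}=q^{-1}A^{*}L$ in the rearranged forms $A^{*}R=q^{-1}RA^{*}$, $A^{*}L=qLA^{*}$, and hence $A^{*}A=(q^{-1}R+qL)A^{*}$.

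The quadratic (second) relation is the easy one. Substituting $A=R+L$ splits its left-hand side into an $R$-summand and an $L$-summand. Pushing every $A^{*}$ to the right, the $R$-summand becomes $(q^{-2}-\beta q^{-1}+1)R(A^{*})^{2}$ and the $L$-summand becomes $(q^{2}-\beta q+1)L(A^{*})^{2}$. Since $\beta=q+q^{-1}$ by \eqref{eq:beta}, both scalar factors vanish, proving the relation.

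For the cubic (first) relation I would first rewrite the left-hand side in terms of the commutator $C:=AA^{*}-A^{*}A$. Expanding $[A^{3},A^{*}]=A^{2}C+ACA+CA^{2}$ and noting $-(\beta+1)A^{2}A^{*}A+(\beta+1)AA^{*}A^{2}=-(\beta+1)ACA$, the left-hand side collapses to $A^{2}C-\beta ACA+CA^{2}$; the right-hand side is exactly $q^{N+M-2}(q+1)^{2}C$. The $q$-commutation relations give $C=(q-1)A^{*}(R-q^{-1}L)=(q-1)(q^{-1}R-L)A^{*}$. Substituting $A=R+L$ and this form of $C$ and pushing all $A^{*}$ to the right yields $A^{2}C-\beta ACA+CA^{2}=(q-1)\,\mathcal E\,A^{*}$, where, setting $P:=q^{-1}R-L$ and $Q:=q^{-1}R+qL$,
\[
\mathcal E := A^{2}P-\beta APQ+PQ^{2}.
\]
Since $A^{*}$ is invertible, the relation is equivalent to the identity $\mathcal E=q^{N+M-2}(q+1)^{2}P$ in $R,L$ alone.

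I would finish by expanding $\mathcal E$ into the eight degree-three monomials in $R,L$. The coefficients of $RRR$ and $LLL$ vanish by the same identity $q^{2}-\beta q+1=0$ used above. The three monomials with two $R$'s and one $L$ assemble into $-q^{-3}(q+1)$ times the left-hand side $LR^{2}-q(q+1)RLR+q^{3}R^{2}L$ of the second down-up relation, and the three with two $L$'s and one $R$ assemble into $q^{-2}(q+1)$ times the left-hand side $L^{2}R-q(q+1)LRL+q^{3}RL^{2}$ of the first down-up relation. Proposition~\ref{lem:RLAs} replaces these by $-q^{N+M}(q+1)R$ and $-q^{N+M}(q+1)L$, giving $\mathcal E=q^{N+M-3}(q+1)^{2}R-q^{N+M-2}(q+1)^{2}L=q^{N+M-2}(q+1)^{2}P$, as needed. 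I expect the main obstacle to be the bookkeeping in this final step: collecting the six mixed coefficients correctly and checking that they match the down-up patterns scaled by $\beta=q+q^{-1}$ — the crucial consistency check being $q^{-1}(q+1)^{2}=2+\beta$ — so that the overall constant emerges as $q^{N+M-2}(q+1)^{2}$.
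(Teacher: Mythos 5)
Your proposal is correct and follows essentially the same route as the paper's proof: substitute $A=R+L$, push $A^*$ to the right via $RA^*=qA^*R$ and $LA^*=q^{-1}A^*L$, and group the resulting cubic monomials in $R,L$ so that Proposition~\ref{lem:RLAs} applies (your coefficient bookkeeping, including $q^{-1}(q+1)^2=2+\beta$, checks out). Your preliminary repackaging via the commutator $C=AA^*-A^*A$ and the reduction to the identity $\mathcal E=q^{N+M-2}(q+1)^2P$ is only an organizational variant of the paper's direct expansion of $\Delta$.
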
 
\begin{proof}  To verify these relations, eliminate $A$ using $A=R+L$, and evaluate the
results using Proposition  \ref{lem:RLAs}. Here are some details for the first relation.
Define
\begin{align*}
\Delta = &A^3 A^* - (\beta + 1) A^2 A^* A + (\beta+1) A A^* A^2 - A^* A^3\\
& \qquad \qquad \qquad  -q^{N+M-2} (q+1)^2 (A A^*-A^*A).
\end{align*}
\noindent We show that $\Delta=0$. By Proposition  \ref{lem:RLAs} we  have  $A^*R=q^{-1} R A^*$ and $A^* L = q L A^*$. Observe that
\begin{align*}
A^* A = A^* (R+L) = (q^{-1} R + q L) A^*.
\end{align*}
\noindent We have
\begin{align*}
&A^3 A^* = (R+L)^3 A^*, \qquad \qquad A^2 A^* A = (R+L)^2 (q^{-1} R + q L)A^*, \\
& A A^* A^2 = (R+L)(q^{-1} R  + q L)^2 A^*, \qquad \qquad A^* A^3 = (q^{-1} R + q L)^3 A^*, \\
& A A^*-A^* A = (q-1) (q^{-1} R-L) A^*.
\end{align*}
We evaluate $\Delta$ using these equations. After some algebraic manipulation, we obtain
\begin{align*}
\Delta &= \biggl ( L^2 R - q (q+1) LRL + q^3 R L^2 + q^{N+M} (q+1) L \biggr) A^* (1-q^{-2}) \\
            & \; + \biggl( L R^2 - q(q+1) RLR + q^3 R^2 L + q^{N+M} (q+1) R \biggr) A^* q^{-1} (q^{-2}-1).
\end{align*}
\noindent In this equation, the expression inside each large parenthesis is zero by Proposition  \ref{lem:RLAs}. Therefore $\Delta=0$.
 \end{proof}

\noindent The second equation in Proposition \ref{prop:TD1} has the following consequence.
\begin{proposition} \label{cor:TD2}
We have
\begin{align*}
(A^*)^3 A - (\beta + 1) (A^*)^2 A A^* + (\beta+1) A^* A (A^*)^2 - A (A^*)^3 = 0.
\end{align*}
\end{proposition}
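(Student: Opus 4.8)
The plan is to derive this cubic relation purely formally from the quadratic relation
\[
(A^*)^2 A - \beta A^* A A^* + A (A^*)^2 = 0,
\]
which is the second identity in Proposition~\ref{prop:TD1}; no return to the matrices $R$, $L$ is needed. Write this quadratic relation as $\Phi = 0$. Since $A^*$ is a fixed matrix in ${\rm Mat}_X(\mathbb C)$, both $A^* \Phi$ and $\Phi A^*$ vanish, and these two products are genuinely distinct because $A$ and $A^*$ do not commute.

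First I would left-multiply $\Phi$ by $A^*$, obtaining
\[
(A^*)^3 A - \beta (A^*)^2 A A^* + A^* A (A^*)^2 = 0.
\]
Then I would right-multiply $\Phi$ by $A^*$, obtaining
\[
(A^*)^2 A A^* - \beta A^* A (A^*)^2 + A (A^*)^3 = 0.
\]
Subtracting the second equation from the first, the leading terms $(A^*)^3 A$ and $A(A^*)^3$ survive with opposite signs, while the two interior monomials $(A^*)^2 A A^*$ and $A^* A (A^*)^2$ each pick up a contribution of $-\beta$ (resp.\ $+1$) from one equation and $-1$ (resp.\ $+\beta$) from the other. Collecting coefficients gives $-(\beta+1)$ and $+(\beta+1)$ respectively, which is exactly the asserted identity.

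There is essentially no obstacle here: the entire content is the bookkeeping of the four monomials under left versus right multiplication by $A^*$. The only point requiring a moment's care is the choice of combination---one must subtract (rather than add) the two cubic relations, forced by the requirement that $(A^*)^3 A$ and $A(A^*)^3$ appear with coefficients $+1$ and $-1$ in the target. Once that sign is fixed, the interior coefficients automatically reorganize into $\mp(\beta+1)$, so the verification is immediate.
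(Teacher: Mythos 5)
Your proof is correct and coincides with the paper's own argument: the paper's proof says to ``compute the commutator of $A^*$ with the second equation in Proposition \ref{prop:TD1}'', and your left-multiplication, right-multiplication, and subtraction is precisely the computation of that commutator $A^*\Phi - \Phi A^* = 0$, spelled out in detail. The bookkeeping of coefficients is accurate, so nothing further is needed.
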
 
\begin{proof} Compute the commutator of $A^*$ with the second equation in Proposition \ref{prop:TD1}.
\end{proof}
\begin{remark}\rm The equation in Proposition \ref{cor:TD2} and the first equation in Proposition \ref{prop:TD1},
give a special case of the tridiagonal relations \cite[lines (14), (15)]{qSerre}.
\end{remark}

\section{The eigenvalues of $A$}
 \noindent  We continue to discuss the Attenuated Space poset $\mathcal A_q(N,M)$.  In this section, we compute the
 eigenvalues of the $q$-adjacency matrix $A$. Our strategy is to compute the eigenvalues for the action of $A$ on each irreducible $T$-module.

\begin{lemma} \label{prop:AonW} Let $W$ denote an irreducible $T$-module, with endpoint $r$ and diameter $d$.
 Consider the matrix that represents $A$ with respect to the basis $\lbrace w_i \rbrace_{i=0}^d$ of $W$ from Proposition  \ref{lem:wbasis}.
This matrix is tridiagonal with entries
\begin{align}
\left(
\begin{array}{ c c cc c c }
 0 & \xi_1   &   &&   & \bf 0  \\
 1 & 0  & \xi_2 &&  &      \\ 
   & 1  &  0   & \cdot &&  \\
     &   & \cdot & \cdot  & \cdot & \\
       &  & & \cdot & \cdot & \xi_d \\
        {\bf 0}  &&  & & 1  &  0  \\
	\end{array}
	\right),
\label{eq:Deltamat}
\end{align}
\noindent where the scalars $\xi_i = \xi_i(r,d)$ are from Definition \ref{def:xi}.
\end{lemma}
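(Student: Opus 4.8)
The plan is to compute the matrix of $A$ acting on $W$ by exploiting the decomposition $A = R + L$ from Lemma \ref{lem:Ay} together with the explicit action of $R$ and $L$ on the basis $\lbrace w_i \rbrace_{i=0}^d$. First I would recall from Proposition \ref{lem:wbasis}(ii) that $R w_i = w_{i+1}$ for $0 \leq i \leq d-1$ and $R w_d = 0$; this means that on $W$ the raising matrix $R$ sends each basis vector to the next one, contributing exactly the subdiagonal of $1$'s. Then I would recall from Proposition \ref{prop:LW} that $L w_i = \xi_i(r,d) w_{i-1}$ for $1 \leq i \leq d$ and $L w_0 = 0$; this contributes the superdiagonal entries $\xi_1, \xi_2, \ldots, \xi_d$.

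Combining these, for $0 \leq i \leq d$ I would write $A w_i = R w_i + L w_i$. For the interior indices $1 \leq i \leq d-1$ this gives $A w_i = w_{i+1} + \xi_i(r,d)\, w_{i-1}$, while $A w_0 = w_1$ and $A w_d = \xi_d(r,d)\, w_{d-1}$. Reading off the coordinates in the ordered basis $w_0, w_1, \ldots, w_d$, the coefficient of $w_{i+1}$ in $A w_i$ lands in the subdiagonal (all equal to $1$) and the coefficient of $w_{i-1}$ in $A w_i$ lands in the superdiagonal (equal to $\xi_i(r,d)$), with all diagonal entries zero. This is precisely the tridiagonal matrix displayed in \eqref{eq:Deltamat}, so the claim follows directly.

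This is essentially a bookkeeping computation, and there is no genuine obstacle: all the needed ingredients are already established, and the only care required is getting the indexing convention straight so that the $1$'s sit below the diagonal and the $\xi_i$ sit above it (matching the convention in which $R$ raises and $L$ lowers the subconstituents). I would present the argument as a two-line verification citing Lemma \ref{lem:Ay}, Proposition \ref{lem:wbasis}(ii), and Proposition \ref{prop:LW}, then read off the matrix.
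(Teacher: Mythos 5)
Your proposal is correct and is exactly the paper's proof: the paper also cites $A=R+L$ together with Proposition \ref{lem:wbasis} and Proposition \ref{prop:LW}, and your write-up simply makes explicit the same bookkeeping of how the subdiagonal of $1$'s and superdiagonal of $\xi_i$'s arise.
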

\begin{proof} By $A=R+L$ and 
 Propositions  \ref{lem:wbasis}, \ref{prop:LW}.
 \end{proof}
 
 \noindent Shortly we will compute the eigenvalues of the matrix \eqref{eq:Deltamat}.
 \medskip

 \noindent We bring in some notation.
\begin{definition} \label{def:thi} \rm Define the set
\begin{align*}
\lbrack N \rbrack & = \lbrace i \;\vert \; 2 i \in \mathbb Z, \quad 0 \leq i \leq N\rbrace
\\
&= \lbrace 0, 1/2, 1, 3/2, \ldots,N\rbrace.
\end{align*}
\end{definition}
\noindent The cardinality of $\lbrack N \rbrack$ is $2N+1$.
\medskip

\noindent 
The scalar $q$ is real and $q\geq 2$;  let $q^{1/2}$ denote the positive square root of $q$. 
\begin{definition}\label{def:th} For $i \in \lbrack N \rbrack$ define
\begin{align*}
\theta_i = \frac{ q^{N-i}-q^i}{q-1}\,q^{M/2}.
\end{align*}
\end{definition}

\begin{lemma} \label{lem:3step} The following {\rm (i)--(iii)} hold:
\begin{enumerate}
\item[\rm (i)]
for $i,j \in \lbrack N \rbrack$ we have
\begin{align} \label{eq:diff}
\theta_i - \theta_j = \frac{(q^j-q^i)(q^{N-i-j} + 1) q^{M/2}}{q-1};
\end{align}
\item[\rm (ii)] the scalars $\lbrace \theta_i \rbrace_{i \in \lbrack N \rbrack}$ are mutually distinct;
\item[\rm (iii)]  $\theta_{N-i} = - \theta_i $ for $i \in \lbrack N \rbrack$.
\end{enumerate}
\end{lemma}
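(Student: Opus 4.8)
The plan is to verify all three assertions by direct computation from Definition \ref{def:th}, since each reduces to an elementary identity among the powers $q^i$. I would treat (i) as the computational core and then deduce (ii) from it, while (iii) is an independent substitution.

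For (i), I would start from the definition and form the difference, pulling out the common factor:
\[
\theta_i - \theta_j = \frac{q^{M/2}}{q-1}\bigl((q^{N-i}-q^i)-(q^{N-j}-q^j)\bigr)
= \frac{q^{M/2}}{q-1}\bigl(q^{N-i}-q^{N-j}-q^i+q^j\bigr).
\]
The claim \eqref{eq:diff} then amounts to the identity $(q^j-q^i)(q^{N-i-j}+1)=q^{N-i}-q^{N-j}-q^i+q^j$, which I would confirm by expanding the left-hand product term by term (the four terms are $q^{N-i}$, $q^j$, $-q^{N-j}$, $-q^i$). This is routine algebra and carries no real obstacle.

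For (ii), I would invoke the formula just proved: $\theta_i=\theta_j$ holds if and only if $(q^j-q^i)(q^{N-i-j}+1)=0$. Here the hypothesis $q\ge 2$ does the work in two ways. First, every power $q^{N-i-j}$ is positive, so $q^{N-i-j}+1>0$ and this factor never vanishes. Second, since $q>1$ the map $x\mapsto q^x$ is strictly increasing on $\mathbb R$, so $q^j-q^i=0$ forces $i=j$. Hence for distinct $i,j\in\lbrack N\rbrack$ we get $\theta_i\neq\theta_j$. For (iii), I would first note that $i\in\lbrack N\rbrack$ gives $2(N-i)=2N-2i\in\mathbb Z$ and $0\le N-i\le N$, so $N-i\in\lbrack N\rbrack$ and $\theta_{N-i}$ is defined; then substituting $N-i$ into Definition \ref{def:th} yields
\[
\theta_{N-i}=\frac{q^{i}-q^{N-i}}{q-1}\,q^{M/2}=-\theta_i.
\]

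In short, there is no genuine difficulty: the only points demanding care are that $q>1$ (supplied by $q\ge 2$) is exactly what forces the injectivity of $x\mapsto q^x$ used in (ii), and that the reflected index $N-i$ remains in $\lbrack N\rbrack$ so that (iii) is well posed.
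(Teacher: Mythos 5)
Your proof is correct and follows the same route as the paper, which simply cites Definition \ref{def:th} for (i) and (iii) and deduces (ii) from (i) using that $q$ is not a root of unity; your argument is just the fully written-out version of that computation. The extra care you take in (ii) (positivity of $q^{N-i-j}+1$ and injectivity of $x\mapsto q^x$) and in (iii) (checking $N-i\in\lbrack N\rbrack$) is exactly what the paper's terse citations implicitly rely on.
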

\begin{proof} (i) By Definition \ref{def:th}. \\
\noindent (ii) By (i) and since $q$ is not a root of unity. \\
\noindent (iii) By Definition \ref{def:th}.
\end{proof}
\noindent Let $W$ denote an irreducible $T$-module. Let $d$ denote the diameter of $W$, and define $t=(N-d)/2$. 
We will need the fact that $t + i \in \lbrack N \rbrack$ for $0 \leq i \leq d$. This is the case, 
 because 
 \begin{align*}
 2(t+i) =N-d+2i \in \mathbb Z
 \end{align*}
 and
\begin{align*}
0 \leq t+i \leq t+d = \frac{N-d}{2} + d = \frac{N+d}{2} \leq \frac{N+N}{2} = N.
\end{align*}

\noindent Consider a  linear map on a finite-dimensional vector space. This map is called {\it multiplicity-free} whenever the map is diagonalizable, and
each eigenspace has dimension one.

\begin{lemma} \label{lem:eigvalW} Let $W$ denote an irreducible $T$-module. Let $d$ denote the diameter of $W$, and define $t=(N-d)/2$. Then
the action of $A$ on $W$ is multiplicity-free, with eigenvalues $\lbrace \theta_{t+i} \rbrace_{i=0}^d$.
\end{lemma}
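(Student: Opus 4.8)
The plan is to diagonalize the tridiagonal matrix displayed in \eqref{eq:Deltamat}, since by Lemma \ref{prop:AonW} this matrix represents the action of $A$ on $W$ with respect to the basis $\lbrace w_i\rbrace_{i=0}^d$. I would first show the matrix is multiplicity-free, and then identify its eigenvalues as $\lbrace \theta_{t+i}\rbrace_{i=0}^d$ with $t=(N-d)/2$.

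For the multiplicity-free claim, I would argue directly from the tridiagonal shape. The matrix in \eqref{eq:Deltamat} has all its subdiagonal entries equal to $1$, hence nonzero, and all its superdiagonal entries equal to $\xi_i=\xi_i(r,d)$, which are nonzero for $1\leq i\leq d$ by the remark immediately following Definition \ref{def:xi} (this is the analogue of the nonvanishing statement recorded after Definition \ref{def:xip}). A tridiagonal matrix whose off-diagonal entries are all nonzero is \emph{irreducible} in the tridiagonal sense, and such a matrix is always nonderogatory: for each eigenvalue the eigenspace is one-dimensional, because deleting the first row and last column leaves a lower-triangular $d\times d$ minor with nonzero diagonal, so the matrix minus any scalar always has rank at least $d$. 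Thus every eigenspace of the action of $A$ on $W$ is one-dimensional; once I show there are $d+1$ distinct eigenvalues, diagonalizability and the multiplicity-free property follow at once.

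To pin down the eigenvalues, I would exhibit an explicit eigenvector for each candidate value $\theta_{t+i}$. Because $t+i\in\lbrack N\rbrack$ for $0\leq i\leq d$ (the containment verified in the excerpt just before the statement), these $\theta_{t+i}$ are among the scalars $\lbrace\theta_j\rbrace_{j\in\lbrack N\rbrack}$, which are mutually distinct by Lemma \ref{lem:3step}(ii); hence the $d+1$ values $\lbrace\theta_{t+i}\rbrace_{i=0}^d$ are distinct, and combined with the previous paragraph this gives exactly $d+1$ eigenvalues on the $(d+1)$-dimensional space $W$, completing the proof. The remaining task is to verify that $\theta_{t+i}$ really is an eigenvalue. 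I would do this by constructing a vector $v=\sum_{j=0}^{d}c_j w_j$ with $A v=\theta_{t+i}v$, where the coefficients $c_j$ are determined by the three-term recurrence coming from the tridiagonal matrix, namely $c_{j-1}+\xi_{j+1}c_{j+1}=\theta_{t+i}\,c_j$; normalizing $c_0=1$ solves the recurrence uniquely, and the top boundary relation $\xi_d c_{d-1}=\theta_{t+i}c_d$ need not be separately imposed since the eigenvalue count already forces it. The cleanest route is to guess a closed form for $c_j$ in terms of $q$-shifted factorials, motivated by the standard $q$-Racah/Leonard-pair eigenvector formulas, and substitute Definition \ref{def:th} and Definition \ref{def:xi} into the recurrence.

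The main obstacle will be the last step: verifying the three-term recurrence for the explicit eigenvector. This requires a $q$-binomial identity relating $\theta_{t+i}=q^{M/2}(q^{N-t-i}-q^{t+i})/(q-1)$ to the coefficients $\xi_j(r,d)=q^{N+M-d}(q^j-1)(q^d-q^{j-1})/(q-1)^2$, and the algebra is the kind of $q$-series manipulation that tends to hide sign and exponent errors. I expect the identity to reduce, after clearing the common factor $q^{M/2}/(q-1)$ and using $N-d=2t$, to a polynomial identity in $q$ that factors cleanly once the eigenvector ansatz is correct; nevertheless this computation is where the real work lies, and I would double-check it against the extremal cases $i=0$ and $i=d$ and against the symmetry $\theta_{N-j}=-\theta_j$ from Lemma \ref{lem:3step}(iii), which predicts the spectrum of \eqref{eq:Deltamat} is symmetric about $0$ (consistent with $SA=-AS$, Lemma \ref{lem:SAAS}).
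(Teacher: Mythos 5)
Your opening step is fine: the triangular $d\times d$ submatrix argument correctly shows that every eigenvalue of the matrix \eqref{eq:Deltamat} has geometric multiplicity one, and the scalars $\lbrace \theta_{t+i}\rbrace_{i=0}^d$ are indeed mutually distinct by Lemma \ref{lem:3step}(ii). The gap is in the step you declare unnecessary. The three-term recurrence $c_{j-1}+\xi_{j+1}c_{j+1}=\theta c_j$ $(0\leq j\leq d-1)$ with $c_0=1$ has a unique solution for \emph{every} scalar $\theta\in\mathbb C$, and the resulting vector $v=\sum_{j=0}^d c_j w_j$ satisfies $(A-\theta I)v=(c_{d-1}-\theta c_d)\,w_d$. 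Thus $v$ is an eigenvector if and only if the boundary relation $c_{d-1}=\theta c_d$ holds (note this relation carries no factor $\xi_d$); that relation is, up to normalization, the characteristic equation of \eqref{eq:Deltamat}. If it were automatic, every complex number would be an eigenvalue. Your justification that ``the eigenvalue count already forces it'' is circular: the eigenvalue count --- that the spectrum is exactly $\lbrace\theta_{t+i}\rbrace_{i=0}^d$ --- is what is being proved, and at that stage you know only that geometric multiplicities are at most one, which does not rule out Jordan blocks (so it does not even give $d+1$ distinct eigenvalues), much less locate the spectrum. Consequently nothing in the proposal establishes that any $\theta_{t+i}$ is an eigenvalue; the closed form for the $c_j$ and the verification of all $d+1$ equations, including the one at $j=d$, is precisely the computation you defer to the last paragraph and never carry out. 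As written, the proposal is a plan rather than a proof.

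For comparison, the paper sidesteps the eigenvector computation entirely: it exhibits an explicit diagonal matrix $F$ and scalar $\gamma=(q-1)^{-1}q^{M/2+t}$ such that the matrix \eqref{eq:Deltamat} equals $\gamma F^{-1}BF$, where $B$ is a tridiagonal matrix whose spectrum is already known --- by \cite[Lemma~4.20]{lrt}, $B$ is multiplicity-free with eigenvalues $q^{d-i}-q^i$ $(0\leq i\leq d)$. Similarity and scaling then give at once that \eqref{eq:Deltamat} is multiplicity-free with eigenvalues $\gamma(q^{d-i}-q^i)=\theta_{t+i}$. If you want to keep your route, you must either produce the eigenvector coefficients explicitly and check the boundary equation (this is genuine $q$-series work, essentially the dual $q$-Krawtchouk evaluation), or first prove diagonalizability with $d+1$ distinct eigenvalues --- for instance, since each $\xi_i>0$, the matrix \eqref{eq:Deltamat} is diagonally similar to a real symmetric irreducible tridiagonal matrix --- and then still identify those eigenvalues, which again requires a computation equivalent to the one you skipped.
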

\begin{proof} We compute the eigenvalues of the matrix \eqref{eq:Deltamat}.
Define the tridiagonal matrix 
\begin{align*}
B =\left(
\begin{array}{ c c cc c c }
 0 & q^d-1   &   &&   & \bf 0  \\
 q-1 & 0  & q^d-q  &&  &      \\ 
   & q^2-1  &  0   & \cdot &&  \\
     &   & \cdot & \cdot  & \cdot & \\
       &  & & \cdot & \cdot & q^d-q^{d-1} \\
        {\bf 0}  &&  & & q^d-1  &  0  \\
	\end{array}
	\right).
\end{align*}
By \cite[Lemma~4.20]{lrt}, the matrix $B$ is multiplicity-free with eigenvalues
\begin{align*}
\label{eq:evListPre}
 q^{d-i} - q^i \qquad \qquad (0 \leq i \leq d).
\end{align*}
We now compare $B$ with the matrix \eqref{eq:Deltamat}.
Define a diagonal matrix $F = {\rm diag}\,(f_0, f_1, \ldots, f_d)$ such that $f_0=1$
and 
\begin{align*}
f_i = f_{i-1} \frac{q^i-1}{q-1} q^{M/2+t} \qquad \qquad (1 \leq i \leq d).
\end{align*}
By linear algebra, the matrix \eqref{eq:Deltamat} is equal to $\gamma F^{-1} B F$, where  $\gamma=(q-1)^{-1} q^{M/2+t}$. Therefore,
the matrix  \eqref{eq:Deltamat}  and the matrix $\gamma B$ have the same characteristic polynomial.
Consequently, the matrix \eqref{eq:Deltamat} is multiplicity-free with eigenvalues
\begin{align*}
\frac{ q^{d-i}-q^i}{q-1}\, q^{M/2+t} \qquad \qquad (0 \leq i \leq d).
\end{align*}
\noindent By Definition \ref{def:th},
\begin{align*}
\theta_{t+ i}=  \frac{ q^{d-i}-q^i}{q-1}\, q^{M/2+t} \qquad \qquad (0 \leq i \leq d).
\end{align*}
\noindent We have shown that the matrix \eqref{eq:Deltamat} is multiplicity-free with eigenvalues
$\lbrace \theta_{t+i} \rbrace_{i=0}^d$. The result follows.
\end{proof}

\begin{theorem} \label{thm:AeigVal} The matrix $A$ is diagonalizable, with eigenvalues  $\lbrace \theta_i \rbrace_{i\in \lbrack N \rbrack}$.
\end{theorem}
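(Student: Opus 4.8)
The plan is to deduce Theorem \ref{thm:AeigVal} from the module-theoretic results already established, in particular Lemma \ref{lem:eigvalW}. Since the algebra $T$ is generated by matrices that are closed under the conjugate-transpose map, the standard module $V$ decomposes as an orthogonal direct sum of irreducible $T$-modules. Because $A \in T$, the action of $A$ on $V$ is the direct sum of its actions on these irreducible constituents. Therefore $A$ is diagonalizable on $V$ if and only if it is diagonalizable on each irreducible $T$-module, and the spectrum of $A$ on $V$ is the union (with multiplicity) of the spectra of $A$ on the irreducible constituents.

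First I would invoke the decomposition of $V$ into irreducible $T$-modules noted in Section 6, and recall that every irreducible $T$-module $W$ has some diameter $d$ with $0 \leq d \leq N$. For such a $W$, set $t = (N-d)/2$; Lemma \ref{lem:eigvalW} shows that the action of $A$ on $W$ is multiplicity-free with eigenvalues $\lbrace \theta_{t+i} \rbrace_{i=0}^d$. In particular $A$ acts diagonalizably on each $W$, so $A$ is diagonalizable on all of $V$. This settles the diagonalizability assertion immediately.

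Next I would identify the set of eigenvalues. Each eigenvalue of $A$ on some irreducible module is of the form $\theta_{t+i}$ with $0 \leq i \leq d$, and the preliminary computation preceding Lemma \ref{lem:eigvalW} verifies that $t+i \in \lbrack N \rbrack$; hence every eigenvalue of $A$ lies in $\lbrace \theta_i \rbrace_{i \in \lbrack N \rbrack}$. Conversely, one must check that each $\theta_i$ for $i \in \lbrack N \rbrack$ is actually attained. This is the step requiring genuine input: it is not automatic that every index in $\lbrack N \rbrack$ arises from some admissible $(r,d) \in \Psi$. I would argue that for any $i \in \lbrack N \rbrack$ there exists an irreducible $T$-module whose spectrum contains $\theta_i$, using the parametrization \eqref{eq:PsiRD} of $\Psi$ together with Proposition \ref{prop:multForm} (which guarantees the relevant multiplicities are nonzero). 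Concretely, given $i \in \lbrack N \rbrack$, one selects a diameter $d$ and endpoint $r$ with $(r,d) \in \Psi$ and $2i = N - d + 2j$ for some $0 \leq j \leq d$, so that $\theta_i = \theta_{t+j}$ with $t=(N-d)/2$; the constraints in \eqref{eq:PsiRD} must be checked to admit such a choice for every $i \in \lbrack N \rbrack$.

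I expect this last surjectivity step to be the main obstacle, since it is the only place where the full combinatorial structure of $\Psi$ (rather than a single module) is used. Assuming it is carried through, combining the forward inclusion and the attainment of every $\theta_i$ shows that the eigenvalue set of $A$ is exactly $\lbrace \theta_i \rbrace_{i \in \lbrack N \rbrack}$, which has cardinality $2N+1$ by the remark following Definition \ref{def:thi}; these scalars are mutually distinct by Lemma \ref{lem:3step}(ii). This completes the proof and simultaneously foreshadows the count of $2N+1$ eigenspaces promised in the introduction.
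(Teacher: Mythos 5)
Your overall strategy coincides with the paper's: decompose the standard module $V$ into irreducible $T$-modules, use Lemma \ref{lem:eigvalW} to get diagonalizability of $A$ and the inclusion of its spectrum in $\lbrace \theta_i \rbrace_{i \in \lbrack N \rbrack}$, and then show every $\theta_i$ is attained. The diagonalizability and forward-inclusion parts of your argument are complete and match the paper. The gap is the attainment (surjectivity) step, which you correctly identify as the crux but leave unfinished: you say one should select, for each $i \in \lbrack N \rbrack$, a pair $(r,d) \in \Psi$ with $2i = N-d+2j$ for some $0 \leq j \leq d$, and that ``the constraints in \eqref{eq:PsiRD} must be checked,'' but you never exhibit such a choice, so the proof as written is incomplete. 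A side remark: your appeal to Proposition \ref{prop:multForm} is unnecessary, since by Lemma \ref{lem:Iso1} (equivalently the description \eqref{eq:PsiRD} of $\Psi$) membership of $(r,d)$ in $\Psi$ already guarantees that an irreducible $T$-module with that endpoint and diameter occurs inside $V$; no multiplicity formula is needed.

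The missing step admits a very short resolution, which is exactly what the paper does: two modules suffice. Take $(r,d)=(0,N)$; this lies in $\Psi$ because $N - 2\cdot 0 \leq N \leq N-0$ and $N \leq N+M$. Here $t=(N-d)/2=0$, so by Lemma \ref{lem:eigvalW} the eigenvalues of $A$ on this module are $\theta_0, \theta_1, \ldots, \theta_N$, i.e., all the integer-indexed eigenvalues. Next take $(r,d)=(1,N-1)$; this lies in $\Psi$ because $N-2 \leq N-1 \leq N-1$ and $N-1 \leq N+M-2$ (using that $M$ is a positive integer). Here $t=1/2$, so the eigenvalues of $A$ on this module are $\theta_{1/2}, \theta_{3/2}, \ldots, \theta_{N-1/2}$, i.e., all the half-integer-indexed ones. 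Together these two modules show that every $\theta_i$ with $i \in \lbrack N \rbrack$ is an eigenvalue of $A$, which completes the proof. So your outline is sound and follows the paper's route; it only needs this explicit two-module verification inserted where you wrote ``assuming it is carried through.''
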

\begin{proof} The matrix $A$ is diagonalizable, because the standard module $V$ is a direct sum of irreducible $T$-modules, and $A$ is
diagonalizable on each irreducible $T$-module. 
 Let $\theta$ denote an eigenvalue of $A$. We show that $\theta$ is included among $\lbrace \theta_i \rbrace_{i\in \lbrack N \rbrack}$.
We mentioned that $V$ is a direct sum of irreducible $T$-modules.  So $\theta$ is 
an eigenvalue for the action of $A$ on some irreducible $T$-module $W$. Let  $d$ denote the diameter of $W$, and define $t=(N-d)/2$.
By Lemma \ref{lem:eigvalW}, there exists an integer $j$ $(0 \leq j \leq d)$ such that $\theta=\theta_{t+j}$. We have
$t+j \in \lbrack N \rbrack$ by the discussion above Lemma \ref{lem:eigvalW}. By these comments,
$\theta$ is included among $\lbrace \theta_i \rbrace_{i\in \lbrack N \rbrack}$. Conversely, for $i \in \lbrack N \rbrack$ we show that $\theta_i$ is an eigenvalue of $A$. 
 If $i \in \mathbb Z$ then $i$ is one of $0,1,2,\ldots, N$.
 Therefore $\theta_i$ is one of $\theta_0, \theta_1, \theta_2, \ldots, \theta_N$.
 By Lemma \ref{lem:eigvalW}, these are the  eigenvalues of $A$ on the irreducible $T$-module with endpoint $0$ and diameter $N$. This $T$-module exists because
$(0,N) \in \Psi$.
If $i \not\in \mathbb Z$ then $i$ is one of $1/2, 3/2, \ldots, N-1/2$. Therefore $\theta_i$ is one of
$\theta_{1/2}, \theta_{3/2}, \ldots, \theta_{N-1/2}$.
 By Lemma \ref{lem:eigvalW}, these are the eigenvalues of $A$ on an irreducible $T$-module with endpoint $1$ and diameter $N-1$. This $T$-module exists because $(1,N-1) \in \Psi$.
 We have shown that the eigenvalues of $A$ are  $\lbrace \theta_i \rbrace_{i\in \lbrack N \rbrack}$.
\end{proof}

\begin{definition} \label{def:Ei} \rm Let $\bf M$ denote the subalgebra of ${\rm Mat}_X(\mathbb C)$ generated by $A$.
For $i \in \lbrack N \rbrack$ let $E_i$ denote the primitive idempotent of $A$ for the eigenvalue $\theta_i$. 
\end{definition}
\begin{lemma} \label{lem:Mbasis} The matrices $\lbrace E_i \rbrace_{i \in \lbrack N \rbrack}$ form a basis for the vector space $\bf M$. Moreover
\begin{align*}
I = \sum_{i \in \lbrack N \rbrack} E_i, \qquad \quad E_i E_j = \delta_{i,j} E_i \;\;\bigl(i,j \in \lbrack N \rbrack \bigr), \qquad \quad A = \sum_{i \in \lbrack N \rbrack} \theta_i E_i.
\end{align*}
\end{lemma}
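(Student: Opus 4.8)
The plan is to treat this as a routine application of the spectral theory of a diagonalizable matrix, since the substantive work was already carried out in Theorem \ref{thm:AeigVal}. That theorem gives that $A$ is diagonalizable with eigenvalues $\lbrace \theta_i \rbrace_{i \in \lbrack N \rbrack}$, and Lemma \ref{lem:3step}(ii) guarantees these are mutually distinct. First I would record the three displayed relations as the defining properties of the spectral projections. For $i \in \lbrack N \rbrack$, the idempotent $E_i$ is the projection onto the $\theta_i$-eigenspace of $A$ along the sum of the other eigenspaces; diagonalizability says $V$ is the direct sum of these eigenspaces. Applying both sides of each asserted identity to a vector lying in a single eigenspace $E_j V$ then verifies them all at once: such a vector is fixed by $E_j$, annihilated by $E_i$ for $i \not= j$, and scaled by $\theta_j$ under $A$. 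This yields $E_i E_j = \delta_{i,j} E_i$, $\sum_i E_i = I$, and $A = \sum_i \theta_i E_i$.

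Next I would show each $E_i$ lies in $\bf M$, so that the relations take place inside $\bf M$. Because the eigenvalues are mutually distinct, the Lagrange interpolation polynomial $p_i(x) = \prod_{j \in \lbrack N \rbrack,\; j \not= i} (x - \theta_j)/(\theta_i - \theta_j)$ is well defined, and evaluating $p_i(A)$ using the relations $A = \sum_k \theta_k E_k$ and $E_j E_k = \delta_{j,k} E_j$ gives $p_i(A) = E_i$. Hence $E_i$ is a polynomial in $A$, so $E_i \in \bf M$.

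Finally I would establish that the matrices $\lbrace E_i \rbrace_{i \in \lbrack N \rbrack}$ form a basis. Linear independence follows from orthogonality: if $\sum_i c_i E_i = 0$, then multiplying on the left by $E_j$ collapses the sum to $c_j E_j = 0$, and since $\theta_j$ is a genuine eigenvalue its eigenspace is nonzero, so $E_j \not= 0$ and thus $c_j = 0$. For spanning, the minimal polynomial of $A$ equals $\prod_{i \in \lbrack N \rbrack}(x - \theta_i)$, of degree $2N+1$, precisely because $A$ is diagonalizable with exactly these $2N+1$ distinct eigenvalues; hence $\dim \bf M = 2N+1 = \vert \lbrack N \rbrack \vert$. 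Having $2N+1$ linearly independent elements of a space of dimension $2N+1$, I conclude they form a basis for $\bf M$. There is no real obstacle here: the entire difficulty was locating the eigenvalues and establishing their distinctness in Theorem \ref{thm:AeigVal} and Lemma \ref{lem:3step}(ii). The only point requiring a moment's care is observing that $E_i \not= 0$ for each $i$, which is needed for linear independence and holds because every $\theta_i$ is actually attained as an eigenvalue of $A$.
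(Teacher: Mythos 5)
Your proof is correct and matches the paper's approach: the paper's own proof is a one-line citation to the standard linear algebra facts about primitive idempotents of a diagonalizable matrix with distinct eigenvalues (the conventions of \cite[pp.~5,~6]{nomSpinModel}), and your argument simply spells out those facts—spectral projections, Lagrange interpolation to get $E_i \in \mathbf{M}$, and the minimal-polynomial dimension count. No gaps; the reliance on Theorem \ref{thm:AeigVal} and Lemma \ref{lem:3step}(ii) is exactly where the paper places the substantive work.
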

\begin{proof} By the linear algebra discussion in \cite[pp.~5,~6]{nomSpinModel}.
\end{proof}

\begin{corollary} The dimension of $\bf M$ is $2N+1$.
\end{corollary}
\begin{proof} By Lemma \ref{lem:Mbasis} and the comment below Definition \ref{def:thi}.
\end{proof}

\noindent Recall the standard module $V$. By construction, for $i \in \lbrack N \rbrack$ the subspace $E_iV$ is the eigenspace of $A$ for the eigenvalue $\theta_i$. 
 Our next general goal is to compute the dimension of $E_iV$. We will make use of the matrix $S$ from Definition \ref{def:Smat}.
\medskip

\begin{lemma} \label{lem:ESE} Let $i,j \in \lbrack N \rbrack$ such that $i+j \not=N$. Then $E_i S E_j=0$.
\end{lemma}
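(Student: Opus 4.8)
The plan is to combine the anticommutation relation $SA=-AS$ from Lemma \ref{lem:SAAS} with the eigenvalue symmetry $\theta_{N-k}=-\theta_k$ from Lemma \ref{lem:3step}(iii). The conceptual point is that conjugation by $S$ (an involution, since $S^2=I$ by Lemma \ref{lem:Stwo}(i)) negates $A$, and negating $A$ permutes its eigenvalues via $\theta_k \mapsto -\theta_k = \theta_{N-k}$. Hence $S$ should carry the $\theta_j$-eigenspace of $A$ onto the $\theta_{N-j}$-eigenspace, which forces $E_i S E_j=0$ unless $i+j=N$.

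The key intermediate step I would establish is the identity $S E_j = E_{N-j}\, S$ for each $j \in \lbrack N \rbrack$. Recall from Definition \ref{def:Ei} and Lemma \ref{lem:Mbasis} that $E_j$ is a polynomial in $A$; write $E_j = p_j(A)$, where $p_j$ is the Lagrange interpolation polynomial determined by $p_j(\theta_m)=\delta_{j,m}$ for $m \in \lbrack N \rbrack$. Since $SA=-AS$ and $S^2=I$, we have $S\,A^n = (-A)^n\,S$ for all $n \in \mathbb N$, and hence $S\,f(A)=f(-A)\,S$ for every polynomial $f$. Applying this with $f=p_j$ gives $S E_j = p_j(-A)\,S$. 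It then remains to identify $p_j(-A)$: using $A=\sum_{k}\theta_k E_k$ we have $-A=\sum_k(-\theta_k)E_k=\sum_k \theta_{N-k}E_k$, so $p_j(-A)=\sum_k p_j(\theta_{N-k})E_k=\sum_k \delta_{j,N-k}E_k=E_{N-j}$, where I use that $k\mapsto N-k$ is a bijection of $\lbrack N \rbrack$. Thus $S E_j=E_{N-j}\,S$.

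Finally, I would multiply this identity on the left by $E_i$: using $S E_j=E_{N-j}\,S$ together with the orthogonality $E_iE_{N-j}=\delta_{i,N-j}E_i$ from Lemma \ref{lem:Mbasis}, I obtain $E_i S E_j=E_i E_{N-j}\,S=\delta_{i,N-j}\,E_i S$. When $i+j\neq N$ we have $i\neq N-j$, so this vanishes, as desired. I do not anticipate a genuine obstacle; the only point requiring care is the computation $p_j(-A)=E_{N-j}$, which rests squarely on the reindexing $\theta_{N-k}=-\theta_k$ and on $k\mapsto N-k$ being a bijection of $\lbrack N \rbrack$. Equivalently, one could argue geometrically: $SA=-AS$ shows that $S$ maps the $\theta_j$-eigenspace $E_jV$ into the $(-\theta_j)$-eigenspace $E_{N-j}V$, whence $E_i S E_j V \subseteq E_i E_{N-j}V = 0$ when $i+j\neq N$.
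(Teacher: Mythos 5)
Your proof is correct, but it takes a genuinely different route from the paper. The paper's proof is a one-line sandwich argument: since $AS+SA=0$ (Lemma \ref{lem:SAAS}), one computes
\begin{align*}
0 = E_i(AS+SA)E_j = \bigl(\theta_i+\theta_j\bigr)\,E_i S E_j,
\end{align*}
and $\theta_i+\theta_j=\theta_i-\theta_{N-j}\neq 0$ when $i+j\neq N$, by the eigenvalue symmetry of Lemma \ref{lem:3step}(iii) together with the distinctness in Lemma \ref{lem:3step}(ii). You instead first establish the stronger intermediate identity $SE_j=E_{N-j}S$, via the spectral-mapping computation $p_j(-A)=E_{N-j}$, and then deduce the vanishing by orthogonality of the idempotents (Lemma \ref{lem:Mbasis}). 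That intermediate identity is precisely Lemma \ref{lem:SE}(i) of the paper, which the paper proves immediately \emph{after} Lemma \ref{lem:ESE} and deduces \emph{from} it; so you have inverted the paper's logical order, proving the later, stronger statement directly (with no circularity, since your functional-calculus argument is independent). Both arguments rest on the same two ingredients, $SA=-AS$ and $\theta_{N-k}=-\theta_k$ plus distinctness. The paper's route buys brevity and locality; yours buys Lemma \ref{lem:SE}(i) for free, so its separate proof could be omitted. Your closing ``geometric'' variant --- $SA=-AS$ forces $S$ to map the $\theta_j$-eigenspace into the $(-\theta_j)$-eigenspace $E_{N-j}V$, whence $E_iSE_jV\subseteq E_iE_{N-j}V=0$ --- is also valid, and is the quickest self-contained version of your approach. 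One trivial remark: the relation $SA^n=(-A)^nS$ follows from $SA=-AS$ alone by induction; the hypothesis $S^2=I$ is not needed there.
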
 
\begin{proof} We have $AS+SA=0$ by Lemma  \ref{lem:SAAS}.
Observe that
\begin{align*}
0 = E_i (AS+SA) E_j = E_i S E_j (\theta_i + \theta_j ).
\end{align*}
We assume that $i + j \not=N$, so $\theta_i + \theta_j\not=0$ in view of  Lemma   \ref{lem:3step}(iii). Therefore $E_iSE_j=0$.
\end{proof}

\begin{lemma} \label{lem:SE} For $i \in \lbrack N \rbrack$ we have
\begin{enumerate}
\item[\rm (i)] $S E_i = E_{N-i}S$;
\item[\rm (ii)]  $S E_iV = E_{N-i}V$.
\end{enumerate}
\end{lemma}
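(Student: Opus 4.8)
The plan is to prove both parts of Lemma~\ref{lem:SE} by leveraging the matrix $S$ as an intertwiner that swaps the eigenvalue $\theta_i$ with $\theta_{N-i}$. The key algebraic input is the relation $SA=-AS$ from Lemma~\ref{lem:SAAS}, together with the symmetry $\theta_{N-i}=-\theta_i$ from Lemma~\ref{lem:3step}(iii). First I would prove part~(i). Since $\lbrace E_j \rbrace_{j\in\lbrack N\rbrack}$ is a resolution of the identity (Lemma~\ref{lem:Mbasis}), I can write $SE_i = \sum_{j\in\lbrack N\rbrack} E_j S E_i$. By Lemma~\ref{lem:ESE}, every term $E_j S E_i$ with $i+j\neq N$ vanishes, so the only surviving term is the one with $j=N-i$; note $N-i\in\lbrack N\rbrack$ whenever $i\in\lbrack N\rbrack$, so this index is legitimate. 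Thus $SE_i = E_{N-i}SE_i$. Symmetrically, expanding $E_iS$ on the right gives $E_iS = E_iSE_{N-i}$, and I will need to connect these two expressions to the claimed $SE_i = E_{N-i}S$.

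The cleanest route is to establish $SE_i = E_{N-i}S$ directly. I would start from $SE_i = E_{N-i}SE_i$ (just derived), and then show $E_{N-i}SE_i = E_{N-i}S$. For the latter, expand $S$ on the right of $E_{N-i}S$ as $E_{N-i}S = E_{N-i}S\sum_{k\in\lbrack N\rbrack}E_k = \sum_k E_{N-i}SE_k$; again by Lemma~\ref{lem:ESE}, the only nonvanishing term requires $(N-i)+k=N$, i.e.\ $k=i$, yielding $E_{N-i}S = E_{N-i}SE_i$. Combining these two identities gives $SE_i = E_{N-i}SE_i = E_{N-i}S$, which is exactly part~(i).

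For part~(ii), I would apply part~(i) at the level of the standard module. Acting on $V$, part~(i) gives $SE_iV = E_{N-i}SV$. Now I would use the fact that $S$ is invertible, indeed an involution: by Lemma~\ref{lem:Stwo}(i) we have $S^2=I$, so $SV=V$. Therefore $SE_iV = E_{N-i}SV = E_{N-i}V$, completing part~(ii). One should double-check the subtlety that $E_{N-i}SV = E_{N-i}V$ uses surjectivity of $S$ on $V$, which $S^2=I$ supplies.

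I expect the main obstacle to be purely bookkeeping rather than conceptual: one must be careful that the two one-sided vanishing arguments (expanding $S$ on the left versus on the right) each isolate the correct surviving idempotent, and that the index $N-i$ genuinely lies in $\lbrack N\rbrack$. The conceptual content is entirely captured by Lemma~\ref{lem:ESE}, which already encodes the anticommutation $SA=-AS$ together with $\theta_i+\theta_{N-i}=0$; once that lemma is in hand, parts~(i) and~(ii) follow by a short idempotent-resolution argument and the involution property $S^2=I$, with no serious computation required.
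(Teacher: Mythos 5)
Your proposal is correct and follows essentially the same argument as the paper: part (i) by expanding the resolution of identity $I=\sum_j E_j$ on both sides of $S$ and invoking Lemma \ref{lem:ESE} to kill all terms except $j=N-i$, and part (ii) from $SV=V$ (invertibility of $S$, via $S^2=I$) combined with part (i). The paper compresses the two one-sided expansions into a single chain of equalities, but the content is identical.
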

\begin{proof} (i) Using Lemma \ref{lem:ESE} and $I = \sum_{j \in \lbrack N \rbrack} E_j$ we obtain
\begin{align*}
S E_i = \sum_{j \in \lbrack N \rbrack} E_j S E_i = E_{N-i} S E_i = \sum_{j \in \lbrack N \rbrack} E_{N-i}S E_j = E_{N-i}S.
\end{align*}
\noindent (ii) We have $SV=V$ since $S$ is invertible. By this and (i),
\begin{align*}
S E_i V = E_{N-i}S V = E_{N-i}V.
\end{align*}
\end{proof}

\begin{lemma} \label{lem:firstHalf} For $i \in \lbrack N \rbrack$ the eigenspaces $E_iV$ and $E_{N-i}V$ have the same dimension.
\end{lemma}
\begin{proof} By Lemma \ref{lem:SE}(ii) and since $S$ is invertible.
\end{proof}

\begin{lemma} \label{lem:dimSum} Let $i \in \lbrack N \rbrack$ such that $i \leq N/2$. Then
\begin{align*}
{\rm dim}\, E_iV = \sum_{r,d} {\rm mult} (r,d),
\end{align*}
where the sum is over the integers $r,d$ such that $(r,d) \in \Psi$ and $i -(N-d)/2 \in \mathbb N$.
\end{lemma}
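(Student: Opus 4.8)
The plan is to count the multiplicity of the eigenvalue $\theta_i$ of $A$ on the standard module $V$ by passing through the decomposition of $V$ into irreducible $T$-modules. Recall from Section 6 that $V$ is an orthogonal direct sum of irreducible $T$-modules, and that (by Definition \ref{def:mult} together with Lemmas \ref{lem:Iso1}, \ref{lem:Iso2}) the isomorphism class $(r,d) \in \Psi$ occurs in this decomposition with multiplicity ${\rm mult}(r,d)$. Since $A \in T$, the matrix $A$ preserves each irreducible summand, so the $\theta_i$-eigenspace $E_iV$ is the direct sum of the $\theta_i$-eigenspaces of $A$ on the individual summands. Consequently ${\rm dim}\,E_iV = \sum_W m_W$, where the sum is over the irreducible summands $W$ of $V$, and $m_W$ denotes the multiplicity of $\theta_i$ as an eigenvalue of the restriction of $A$ to $W$.

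First I would evaluate $m_W$ for a single irreducible summand $W$ of endpoint $r$ and diameter $d$. By Lemma \ref{lem:eigvalW} the action of $A$ on $W$ is multiplicity-free, with eigenvalues $\lbrace \theta_{t+k}\rbrace_{k=0}^d$, where $t=(N-d)/2$. Using that the scalars $\lbrace \theta_j \rbrace_{j \in \lbrack N \rbrack}$ are mutually distinct (Lemma \ref{lem:3step}(ii)), the eigenvalue $\theta_i$ occurs in $W$ precisely when $i = t+k$ for some integer $k$ with $0 \leq k \leq d$; in that case $m_W=1$, and otherwise $m_W=0$. In particular $m_W$ depends only on the isomorphism class $(r,d)$ of $W$, and equals $1$ exactly when $i - (N-d)/2 \in \lbrace 0,1,\ldots,d\rbrace$. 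The discussion preceding Lemma \ref{lem:eigvalW} guarantees that $t+k \in \lbrack N \rbrack$ for $0 \leq k \leq d$, so each $\theta_{t+k}$ is genuinely one of the scalars $\theta_j$ with $j \in \lbrack N \rbrack$, and the matching $i=t+k$ is taking place within the index set $\lbrack N \rbrack$.

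Next I would collect the summands by isomorphism class, obtaining ${\rm dim}\,E_iV = \sum_{(r,d)} {\rm mult}(r,d)$, where the sum ranges over those $(r,d) \in \Psi$ for which $i - (N-d)/2 \in \lbrace 0,1,\ldots,d\rbrace$. The concluding step is to show that, under the hypothesis $i \leq N/2$, the upper bound in this condition is automatic, so that it collapses to the stated condition $i - (N-d)/2 \in \mathbb N$. Indeed, since $d \geq 0$ we have $i - (N-d)/2 \leq N/2 - (N-d)/2 = d/2 \leq d$, so the constraint $i - (N-d)/2 \leq d$ imposes nothing. Hence the condition $i - (N-d)/2 \in \lbrace 0,1,\ldots,d\rbrace$ reduces to the requirement that $i - (N-d)/2$ be a nonnegative integer, namely $i - (N-d)/2 \in \mathbb N$, which completes the count.

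The argument is essentially bookkeeping once Lemma \ref{lem:eigvalW} is available, and no new computation is needed. The one point requiring care is the final step: the hypothesis $i \leq N/2$ is exactly what is needed to discard the upper bound $i - (N-d)/2 \leq d$, and I would make sure that both the integrality and the nonnegativity built into $i - (N-d)/2 \in \mathbb N$ are correctly matched to the two surviving constraints (that $\theta_i$ be among the $\theta_{t+k}$ and that $k \geq 0$). The companion case $i > N/2$, where the lower rather than the upper bound is the nontrivial one, is then naturally handled separately via the symmetry ${\rm dim}\,E_iV = {\rm dim}\,E_{N-i}V$ from Lemma \ref{lem:firstHalf}.
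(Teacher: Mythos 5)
Your proposal is correct and follows essentially the same route as the paper: decompose $V$ into irreducible $T$-modules, apply $E_i$ to each summand, and use Lemma \ref{lem:eigvalW} to see that each summand of diameter $d$ contributes $1$ or $0$ according to whether $i-(N-d)/2$ is a nonnegative integer. In fact you spell out a point the paper leaves implicit, namely that the hypothesis $i \leq N/2$ makes the upper bound $i-(N-d)/2 \leq d$ automatic, which is exactly why the paper's condition $i-t \in \mathbb N$ suffices.
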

\begin{proof} 
Write $V$ as an orthogonal direct sum of irreducible $T$-modules. The set of irreducible $T$-modules in this sum will be denoted by $\Lambda$. We have
\begin{align*}
V=\sum_{W \in \Lambda} W.
\end{align*}
In the above equation we apply $E_i$ to
each term, and get an orthogonal direct sum $E_iV = \sum_{W\in \Lambda} E_iW$. For each $W \in \Lambda$ we consider the dimension of $E_iW$.
Let $d$ denote the diameter of $W$, and write $t=(N-d)/2$. By Lemma \ref{lem:eigvalW} we have
 \begin{align*}
{\rm dim} \,E_iW = \begin{cases}  
1, & {\mbox{\rm if $i-t \in \mathbb N$}};\\
0, & {\mbox{\rm if $i - t \not\in \mathbb N$}}.
\end{cases}
\end{align*}
The result follows.
\end{proof}

\noindent  Recall the scalars 
 $\mu_r$ from  \eqref{eq:muForm}.

\begin{theorem} \label{thm:dimEV} For $i \in \lbrack N \rbrack$ and $i \leq N/2$, the dimension of $E_iV$ is given below.
\begin{enumerate}
\item[\rm (i)] If $i \in \mathbb Z$, then the dimension is
\begin{align*}
\mu_0 \binom{N}{i}_q + \mu_2 \binom{N-2}{i-1}_q + \mu_4 \binom{N-4}{i-2}_q  + \cdots + \mu_{2i} \binom{N-2i}{0}_q.
\end{align*}
\item[\rm (ii)] If $i\not\in \mathbb Z$,  then the dimension  is
\begin{align*}
\mu_1 \binom{N-1}{i-1/2}_q + \mu_3 \binom{N-3}{i-3/2}_q + \mu_5 \binom{N-5}{i-5/2}_q + \cdots + \mu_{2i} \binom{N-2i}{0}_q.
\end{align*}
\end{enumerate}
\end{theorem}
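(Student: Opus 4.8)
The plan is to start from Lemma \ref{lem:dimSum}, which expresses $\dim E_iV$ as a sum of multiplicities $\mathrm{mult}(r,d)$ over the pairs $(r,d)\in\Psi$ with $i-(N-d)/2\in\mathbb N$, and to reorganize this sum so that Proposition \ref{prop:PsiSum} can be applied term by term. The key observation is that the quantity $s=2r+d-N$ is constant along each of the chains $(r-\ell,d+2\ell)$, $0\le\ell\le N-r-d$, appearing in Proposition \ref{prop:PsiSum}, and that the value $(N-d)/2$ decreases by $1$ at each step of such a chain. So I would sort the pairs $(r,d)$ in the sum of Lemma \ref{lem:dimSum} according to the value of $s$, and recognize each resulting partial sum as the left-hand side of \eqref{eq:sumRD} for a suitable starting pair.

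To carry this out, fix $i\in\lbrack N\rbrack$ with $i\le N/2$ and write $t=(N-d)/2$, so that $d=N-2t$ and $r=t+s/2$ for a pair with $2r+d-N=s$. The condition $i-t\in\mathbb N$ from Lemma \ref{lem:dimSum} says $t\le i$ and $i-t\in\mathbb Z$. Since the pairs sharing a common value of $s$ have $t$-values differing by integers and satisfying $t\ge s/2$, the requirement $i-t\in\mathbb Z$ holds for all or none of them, and it holds precisely when $s\equiv 2i\pmod 2$. For such an $s$, the pairs contributing to $\dim E_iV$ are exactly those with $s/2\le t\le i$, that is, the pairs $(t+s/2,\,N-2t)$ with $s/2\le t\le i$. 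I would next check that this is precisely the chain of Proposition \ref{prop:PsiSum} for the starting pair $(r,d)=(i+s/2,\,N-2i)$; indeed this pair has $(N-d)/2=i$ and $N-r-d=i-s/2$, so its chain runs through the $t$-values $i,i-1,\dots,s/2$, as required.

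Applying \eqref{eq:sumRD} to this starting pair then gives the contribution of the $s$-chain as $\mu_s\binom{N-s}{i-s/2}_q$, since $2r+d-N=s$, $2N-2r-d=N-s$, and $N-r-d=i-s/2$ for $(r,d)=(i+s/2,\,N-2i)$. Summing over the admissible $s$ yields $\dim E_iV=\sum_s\mu_s\binom{N-s}{i-s/2}_q$, where $s$ runs over the nonnegative integers with $s\equiv 2i\pmod 2$ and $s\le 2i$. Splitting this sum according to whether $i$ is an integer (so $s$ is even, giving the terms $\mu_0,\mu_2,\dots,\mu_{2i}$) or a half-integer (so $s$ is odd, giving $\mu_1,\mu_3,\dots,\mu_{2i}$) produces the two displayed formulas.

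The step requiring the most care---and the one I expect to be the main obstacle---is matching the range of summation to the membership condition $(r,d)\in\Psi$. Proposition \ref{prop:PsiSum} requires its starting pair to lie in $\Psi$, so within the range $0\le s\le 2i$ forced by $t\le i$ I would verify using \eqref{eq:PsiRD} that $(i+s/2,\,N-2i)\in\Psi$ if and only if $s\le M$; the only constraint from \eqref{eq:PsiRD} not already guaranteed is $d\le N+M-2r$, which reduces to $s\le M$ independently of $t$. For $s>M$ no pair with that value of $s$ lies in $\Psi$, so such $s$ contribute nothing to $\dim E_iV$; on the other hand $\mu_s=0$ for $s>M$ by the observation following \eqref{eq:muForm}, so the corresponding terms $\mu_s\binom{N-s}{i-s/2}_q$ also vanish. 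This reconciliation lets me extend the sum to all $s\le 2i$ of the correct parity, exactly as in the statement.
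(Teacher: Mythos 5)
Your proposal is correct and is exactly the paper's argument: the paper proves Theorem \ref{thm:dimEV} simply by citing Proposition \ref{prop:PsiSum} and Lemma \ref{lem:dimSum}, leaving the reader to do the regrouping you carried out. Your writeup supplies precisely the intended bookkeeping---grouping the pairs in Lemma \ref{lem:dimSum} by the invariant $s=2r+d-N$, identifying each group with the chain of Proposition \ref{prop:PsiSum} starting at $(i+s/2,\,N-2i)$, and using $\mu_s=0$ for $s>M$ to extend the range of $s$---so it matches the paper's approach in full.
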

\begin{proof}  By Proposition  \ref{prop:PsiSum} and Lemma \ref{lem:dimSum}.
\end{proof}

%
 %
 %
 
\section{A $Q$-polynomial structure for the poset $\mathcal A_q(N,M)$}

\noindent We continue to discuss the Attenuated Space poset $\mathcal A_q(N,M)$. Recall the diagonal
matrix $A^*$ from Definition  \ref{def:As},  and the $q$-adjacency matrix $A$ from Definition 
\ref{def:A}. Lemma \ref{lem:AsTD} shows that $A$ acts on the eigenspaces of $A^*$ in a (block) tridiagonal fashion.
In this section, 
we show that $A^*$ acts on the eigenspaces of $A$ in a (block) tridiagonal fashion.
 We use this action to 
show that $A$ is $Q$-polynomial.
\medskip

\noindent The next two propositions show that for distinct $i,j \in \lbrack N \rbrack$,
\begin{equation}
\mbox{\rm $E_i A^* E_j \not=0$ \quad if and only if \quad $\vert i-j \vert =1$.} \label{eq:iff}
 \end{equation}

 
 \begin{proposition} \label{prop:A3} Let $i,j \in \lbrack N \rbrack$ such that $i - j \not\in \lbrace 1,0,-1\rbrace$. Then 
 $E_i A^* E_j = 0$.
 \end{proposition}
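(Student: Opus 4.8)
The plan is to sandwich the first tridiagonal relation of Proposition~\ref{prop:TD1} between the primitive idempotents $E_i$ and $E_j$ of $A$, and to extract from it a scalar equation for $E_i A^* E_j$. This is the natural relation to use because it is linear in $A^*$ and cubic in $A$, so sandwiching between eigenprojectors of $A$ collapses the $A$'s to eigenvalues. Write $K = q^{N+M-2}(q+1)^2$. Using $E_i A = \theta_i E_i$ and $A E_j = \theta_j E_j$, every summand $A^k A^* A^{3-k}$ becomes $\theta_i^k \theta_j^{3-k} E_i A^* E_j$, while the right-hand side $AA^*-A^*A$ becomes $(\theta_i-\theta_j)E_iA^*E_j$. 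Collecting terms I obtain
\[
\bigl(\theta_i^3 - (\beta+1)\theta_i^2\theta_j + (\beta+1)\theta_i\theta_j^2 - \theta_j^3 - K(\theta_i-\theta_j)\bigr)\, E_i A^* E_j = 0,
\]
and pulling out the common factor $\theta_i-\theta_j$ this reads $(\theta_i - \theta_j)\bigl(\theta_i^2 - \beta\theta_i\theta_j + \theta_j^2 - K\bigr) E_i A^* E_j = 0$.

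The crux is to factor the quadratic $\theta_i^2 - \beta\theta_i\theta_j + \theta_j^2 - K$. I would extend Definition~\ref{def:th} to all real subscripts via the same formula and record two identities for the extended $\theta$. First, the three-term recurrence $\theta_{j-1} + \theta_{j+1} = \beta\theta_j$, which is a one-line check using $\beta = q+q^{-1}$. Second, the companion identity $\theta_{j-1}\theta_{j+1} = \theta_j^2 - K$, which follows by verifying that $\theta_j^2 - \theta_{j-1}\theta_{j+1}$ equals $q^{N+M-2}(q+1)^2$. Together these say that $\theta_{j-1}$ and $\theta_{j+1}$ are exactly the two roots of the quadratic $x^2 - \beta\theta_j x + (\theta_j^2 - K)$ in $x$, whence
\[
\theta_i^2 - \beta\theta_i\theta_j + \theta_j^2 - K = (\theta_i - \theta_{j-1})(\theta_i - \theta_{j+1}).
\]
Consequently the scalar coefficient of $E_i A^* E_j$ is precisely $(\theta_i - \theta_j)(\theta_i - \theta_{j-1})(\theta_i - \theta_{j+1})$.

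To finish, I would observe that the map $x \mapsto \theta_x$ is strictly decreasing on $\mathbb{R}$, since $\tfrac{q^{M/2}}{q-1}>0$ while both $q^{N-x}$ and $-q^x$ are decreasing; hence it is injective. Therefore $\theta_i - \theta_{j+\epsilon} = 0$ forces $i = j+\epsilon$ for $\epsilon \in \{-1,0,1\}$. Under the hypothesis $i - j \notin \{1,0,-1\}$ all three factors are nonzero, so the displayed scalar equation forces $E_i A^* E_j = 0$, as claimed.

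I expect the only (routine) obstacle to be the verification of the product identity $\theta_{j-1}\theta_{j+1} = \theta_j^2 - K$, i.e.\ matching the constant $q^{N+M-2}(q+1)^2$ against $\bigl(\tfrac{q^{M/2}}{q-1}\bigr)^2 q^N (q-q^{-1})^2$; everything else is bookkeeping with the idempotent relations $E_i A = \theta_i E_i = A E_i$ and the strict monotonicity of the $\theta$'s.
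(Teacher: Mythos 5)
Your proposal is correct and takes essentially the same route as the paper: both sandwich the first relation of Proposition~\ref{prop:TD1} between $E_i$ and $E_j$, collapse it to the scalar identity $(\theta_i-\theta_j)\bigl(\theta_i^2-\beta\theta_i\theta_j+\theta_j^2-q^{N+M-2}(q+1)^2\bigr)E_iA^*E_j=0$, and then show the scalar coefficient is nonzero. The only difference is cosmetic, in how that last step is organized: the paper expands the quadratic factor explicitly into $q$-power factors using Lemma~\ref{lem:3step}(i) and the fact that $q$ is not a root of unity, whereas you factor it as $(\theta_i-\theta_{j-1})(\theta_i-\theta_{j+1})$ via Vieta's formulas (after extending Definition~\ref{def:th} to real subscripts) and conclude by strict monotonicity of $x\mapsto\theta_x$ --- the same factorization, verified differently.
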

 \begin{proof}   Recall the abbreviation $\beta = q + q^{-1}$.
 We invoke the first equation in Proposition \ref{prop:TD1}. Referring to that equation, let $\Delta$ denote the left-hand side
 minus the right-hand side. We have $\Delta=0$, so $E_i \Delta E_j=0$. We evaluate $E_i \Delta E_j$ using
 $E_i A = \theta_i E_i$ and $A E_j = \theta_j E_j$. This yields
 \begin{align*}
 0 & = E_i \Delta E_j \\
    &= E_i A^* E_j \biggl( \theta^3_i - (\beta+1) \theta^2_i \theta_j + (\beta+1) \theta_i \theta^2_j - \theta^3_j - q^{N+M-2} (q+1)^2 (\theta_i - \theta_j)\biggr) \\
    &= E_i A^* E_j \bigl(\theta_i - \theta_j \bigr) \Bigl( \theta^2_i - \beta \theta_i \theta_j + \theta^2_j - q^{N+M-2} (q+1)^2 \Bigr) \\
      &= E_i A^* E_j \bigl(\theta_i - \theta_j \bigr)\frac{\bigl( q^i-q^{j-1}\bigr)  \bigl( q^i-q^{j+1}\bigr)  \bigl( q^{N-i-j-1}+1\bigr)  \bigl( q^{N-i-j+1}+1\bigr) q^M}{(q-1)^2}.
 \end{align*}
 We examine the coefficient of $E_i A^* E_j$ in the previous line. We show that this coefficient is nonzero. The  factor $\theta_i - \theta_j$ is nonzero
 because $i \not=j$. To the right of $\theta_i - \theta_j$ we see a fraction with  five  factors in the numerator. The five factors  are nonzero, because $i\not= j\pm 1$ and since $q$ is nonzero and not a root of unity. 
 We have shown that the coefficient of $E_iA^*E_j$ is nonzero. Therefore $E_i A^* E_j =0$. 
 \end{proof}
 
 \begin{proposition}\label{prop:TwoPath} Let $i,j \in \lbrack N \rbrack$ such that $\vert i - j \vert = 1$. Then $E_i A^* E_j \not=0$.
 \end{proposition}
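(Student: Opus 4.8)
The plan is to reduce the statement to a single irreducible $T$-module and there exploit irreducibility directly. Fix a decomposition $V = \sum_{W \in \Lambda} W$ of the standard module into irreducible $T$-modules. Since $A^*$ and each $E_p$ (being a polynomial in $A$) preserve every $W \in \Lambda$, we have $E_i A^* E_j = \sum_{W \in \Lambda}(E_i A^* E_j)|_W$, so it suffices to produce a single $W$ on which $E_i A^* E_j$ is nonzero. Note that because $A$ is not symmetric, the two relations $E_i A^* E_{i+1} \neq 0$ and $E_{i+1}A^* E_i \neq 0$ are not interchangeable, and I will have to establish each one separately.

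Since $|i - j| = 1$, the indices $i, j$ are either both integers or both half-integers. In the first case I would choose $W$ to have endpoint $0$ and diameter $N$, and in the second case endpoint $1$ and diameter $N-1$; both isomorphism classes lie in $\Psi$ by \eqref{eq:PsiRD}, exactly as in the proof of Theorem \ref{thm:AeigVal}, so a corresponding $W \in \Lambda$ exists. By Lemma \ref{lem:eigvalW}, the action of $A$ on the first module is multiplicity-free with eigenvalues $\theta_0, \ldots, \theta_N$, and on the second with eigenvalues $\theta_{1/2}, \ldots, \theta_{N-1/2}$; in either case both $\theta_i$ and $\theta_j$ occur, and each eigenspace $E_p W$ is one-dimensional (hence nonzero) for every index $p$ in the relevant run.

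The core of the argument is an invariant-subspace construction on the chosen $W$. Writing the eigenvalue indices of $A|_W$ as a run $t, t+1, \ldots, t+d$, Proposition \ref{prop:A3} (applied inside $W$) gives $A^* E_p W \subseteq E_{p-1}W + E_p W + E_{p+1}W$. Suppose, for contradiction, that $E_{i+1}A^* E_i$ vanishes on $W$; then $U = \sum_{p \leq i} E_p W$ satisfies $A U \subseteq U$ and, because the only escape from $U$ under $A^*$ was the now-vanishing $E_{i+1}A^* E_i$ term, also $A^* U \subseteq U$. Since $T$ is generated by $A$ and $A^*$, the subspace $U$ is a $T$-submodule of $W$; but $U \neq 0$ (it contains $E_t W$) and $U \neq W$ (it misses $E_{t+d}W$, as $t+d \geq i+1$), contradicting irreducibility of $W$. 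The symmetric choice $U' = \sum_{p \geq i+1} E_p W$ handles the assumption $E_i A^* E_{i+1} = 0$ in the same way. This yields both off-diagonal relations, and hence the proposition.

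The main obstacle I anticipate is bookkeeping rather than conceptual: one must check that the chosen module genuinely carries both eigenvalues $\theta_i, \theta_j$ and that the cut index lies strictly inside the eigenvalue run (so that $U$ is both proper and nonzero), and one must resist the temptation to deduce $E_{i+1}A^* E_i \neq 0$ from $E_i A^* E_{i+1} \neq 0$, since $A$ is not self-adjoint and the primitive idempotents $E_p$ need not be symmetric.
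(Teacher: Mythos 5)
Your proof is correct, and while it rests on the same ingredients as the paper's---Proposition \ref{prop:A3}, Lemma \ref{lem:eigvalW}, and the existence of an irreducible $T$-module with endpoint $2\tau$ and diameter $N-2\tau$---it is organized genuinely differently: the paper runs the contradiction globally in the standard module, whereas you run it inside a single irreducible summand. In the paper, one first uses the involution $S$ of Definition \ref{def:Smat} (via $SE_iS^{-1}=E_{N-i}$, Lemma \ref{lem:SE}) to reduce $\vert i-j\vert=1$ to the single case $i=j+1$; then, assuming $E_iA^*E_j=0$ on all of $V$, one shows that $W=E_jV+E_{j-1}V+\cdots+E_\tau V$ is a $T$-module, proves that an irreducible module $U$ with endpoint $2\tau$ and diameter $N-2\tau$ must embed in $W$ (since $E_\tau U\neq 0$ forces $U\cap W\neq 0$, and then irreducibility gives $U\subseteq W$), and obtains the contradiction $0\neq E_iU\subseteq E_iW=0$. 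You instead observe that it suffices to exhibit nonvanishing on one irreducible summand of a fixed decomposition, pick a summand of that same isomorphism type (which exists since $(2\tau,\,N-2\tau)\in\Psi$ by \eqref{eq:PsiRD}), and show that vanishing of either corner on it would produce a proper nonzero $T$-submodule $\sum_{p\leq i}E_pW$ or $\sum_{p\geq i+1}E_pW$, contradicting irreducibility directly. Your route dispenses with $S$ entirely, at the cost of treating the two corners $E_{i+1}A^*E_i$ and $E_iA^*E_{i+1}$ separately (the paper gets one from the other by $S$-conjugation), and it replaces the paper's embedding step $U\subseteq W$ by a cleaner direct appeal to irreducibility of the ambient module. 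Both your reduction to a single summand and your invariant-subspace step use that each $E_p$ lies in $T$ and that $T$ is generated by $A,A^*$; both facts are available in the paper (the unlabeled lemma in Section 8 following Lemma \ref{lem:SAAS}), so nothing is missing.
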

 \begin{proof} Recall the matrix $S$ from Definition \ref{def:Smat}. Note that 
 \begin{align*}
 S (E_i A^* E_j) S^{-1} = (S E_i S^{-1})( S A^* S^{-1})( S E_j S^{-1}) =
 E_{N-i} A^* E_{N-j}. 
 \end{align*}
\noindent Therefore, $E_i A^* E_j = 0 $ if and only if $E_{N-i} A^* E_{N-j} = 0$. Replacing the pair $(i,j)$ by $(N-i,N-j)$ if necessary, we may assume that $i-j=1$.
 Our strategy is to assume $E_i A^* E_j=0$, and get a contradiction. For notational convenience, define
 \begin{align*}
\tau = \begin{cases}  
0, & {\mbox{\rm if $i,j \in \mathbb Z$}};\\
1/2, & {\mbox{\rm if $i,j \not\in \mathbb Z$}}.
\end{cases}
\end{align*}
 \noindent Define
 \begin{align*}
 W = E_jV + E_{j-1}V + \cdots + E_\tau V.
 \end{align*}
 We have $E_iW=0$ since $i=j+1$. We claim that $W$ is a $T$-module. To prove the claim, note that $AW \subseteq W$.
We have $A^* W \subseteq W$ by $E_iA^*E_j=0$ and Proposition  \ref{prop:A3}. The matrices $A, A^*$ generate $T$, so 
$W$ is a $T$-module as claimed.  Let $U$ denote an irreducible $T$-module with endpoint $2\tau$ and diameter $N-2\tau $. The
eigenvalues of $A$ on $U$ are $\lbrace \theta_{\tau+\ell} \rbrace_{\ell =0}^{N-2\tau}$. Therefore $E_{\tau+\ell} U \not=0$ for $0 \leq \ell \leq N-2\tau$.
In particular $E_\tau U\not=0$. By construction $E_\tau U \subseteq U \cap W$, so  $U \cap W \not=0$. Both $U$ and $W$ are $T$-modules, so $U\cap W$ is a $T$-module.
The $T$-module $U$ is irreducible, so $U\subseteq W$.
We have $0 \not= E_iU \subseteq E_iW$, for a contradiction. We have shown that $E_i A^* E_j \not=0$.
 \end{proof}
 
 \noindent The concept of a dual adjacency matrix is explained in \cite[Definition~2.2]{Lnq}. We will use this concept to describe how $A^*$ acts on the eigenspaces of $A$.
 Recall the algebra ${\bf M^*}$ from above Definition \ref{def:As}.
 For notational convenience, define $D=2N$. Let $\lbrace V_i \rbrace_{i=0}^{D}$ denote an ordering of the eigenspaces of $A$. As defined in \cite[Definition~2.2]{Lnq}, a dual adjacency matrix for $\lbrace V_i \rbrace_{i=0}^{D}$
 is a matrix $\bf A^*$ that generates ${\bf M^*}$ and
 \begin{align*}
 {\bf A^*} V_i \subseteq V_{i-1} + V_i + V_{i+1} \qquad \qquad (0 \leq i \leq D),
 \end{align*}
 \noindent where $V_{-1}=0$ and $V_{D+1}=0$.

 \begin{corollary} \label{cor:maincor}
 For the poset $\mathcal A_q(N,M)$ the matrix $A^*$ is a dual adjacency matrix  for the following  orderings of the eigenspaces of $A$: 
\begin{enumerate}
\item[\rm (i)]    
$ E_0V< E_1V< E_2V < \cdots < E_NV< E_{1/2}V < E_{3/2}V < \cdots < E_{N-1/2}V $;
\item[\rm (ii)] 
 $E_{1/2}V < E_{3/2}V < \cdots < E_{N-1/2}V < E_0V<  E_1V< E_2 V< \cdots < E_NV$.
 \end{enumerate}
 \end{corollary}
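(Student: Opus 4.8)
The plan is to verify directly the two defining conditions of a dual adjacency matrix, using only results already in hand. The first condition, that $A^*$ generates ${\bf M}^*$, is immediate from Lemma \ref{lem:AsE}. So the entire task reduces to checking the tridiagonal containment for each of the two orderings, and this is essentially a repackaging of \eqref{eq:iff}.

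First I would record the fundamental consequence of Propositions \ref{prop:A3} and \ref{prop:TwoPath}, namely \eqref{eq:iff}: for distinct $i,j \in \lbrack N \rbrack$ the product $E_i A^* E_j$ is nonzero exactly when $|i-j|=1$. Using $I=\sum_{k\in \lbrack N \rbrack} E_k$ from Lemma \ref{lem:Mbasis}, for each $m \in \lbrack N \rbrack$ I would expand $A^* E_m = \sum_{k\in \lbrack N \rbrack} E_k A^* E_m$ and discard every term with $k\neq m$ and $|k-m|\neq 1$. This yields $A^* E_m V \subseteq E_{m-1}V + E_m V + E_{m+1}V$, with the convention that $E_{m\pm 1}V=0$ whenever $m\pm 1 \notin \lbrack N \rbrack$. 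The key structural observation is that the surviving indices $m-1,m,m+1$ all share the parity of $m$ (all integers, or all half-integers), so $A^*$ never couples an integer-indexed eigenspace to a half-integer-indexed one.

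Next I would translate this index-level tridiagonality into position-level tridiagonality for ordering (i). There the integer indices $0<1<\cdots<N$ occupy positions $0,\ldots,N$ with position equal to index, while the half-integer indices $1/2<3/2<\cdots<N-1/2$ occupy positions $N+1,\ldots,2N$; within each block, advancing one position increases the index by exactly $1$. Writing $V_i = E_{\sigma(i)}V$ for the index labels of the ordering, the containment $A^* E_{\sigma(i)}V \subseteq E_{\sigma(i)-1}V + E_{\sigma(i)}V + E_{\sigma(i)+1}V$ therefore becomes precisely $A^* V_i \subseteq V_{i-1}+V_i+V_{i+1}$. No term reaches across the boundary between the two blocks, because that would require $A^*$ to couple indices of opposite parity, which the observation above forbids; this also disposes of the endpoint conventions $V_{-1}=V_{2N+1}=0$. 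Ordering (ii) is verified identically, the only change being that the half-integer block now precedes the integer block.

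I expect the only care needed is the bookkeeping of the previous paragraph: matching each index neighbor $m\pm 1$ to the correct position neighbor inside its block, and confirming that the junction between the integer and half-integer blocks carries no $A^*$-coupling. Since that junction-freeness follows at once from the same-parity observation, there is no genuine obstacle here, and the corollary emerges as a direct consequence of \eqref{eq:iff} together with the explicit description of the two orderings.
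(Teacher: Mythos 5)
Your proposal is correct and follows essentially the same route as the paper, whose proof simply cites Lemma \ref{lem:AsE} for the generation of ${\bf M^*}$ and Propositions \ref{prop:A3}, \ref{prop:TwoPath} for the tridiagonal action; you have merely made explicit the bookkeeping (expanding $A^*E_m$ over the resolution of the identity, noting that index neighbors $m\pm 1$ stay within the integer or half-integer block, and matching index neighbors to position neighbors) that the paper leaves to the reader.
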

 \begin{proof}  By Lemma \ref{lem:AsE}  and
 Propositions \ref{prop:A3}, \ref{prop:TwoPath}.
 \end{proof}

\noindent An ordering of the eigenspaces of $A$ is called {\it $Q$-polyomial} whenever it has a dual adjacency matrix \cite[Definition~2.3]{Lnq}.
 
 \begin{corollary} \label{cor:QpolyA} For the poset $\mathcal A_q(N,M)$ the  orderings {\rm (i), (ii)} in Corollary \ref{cor:maincor} are $Q$-polynomial.
 \end{corollary}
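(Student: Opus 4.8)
The plan is to read the statement directly off the definition given immediately above the corollary: an ordering of the eigenspaces of $A$ is $Q$-polynomial precisely when it admits a dual adjacency matrix. Thus it suffices to exhibit, for each of the orderings (i) and (ii) of Corollary \ref{cor:maincor}, a single matrix that generates ${\bf M}^*$ and acts (block) tridiagonally on the eigenspaces of $A$ in the prescribed order.

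I would then observe that no new computation is needed. Corollary \ref{cor:maincor} already asserts that $A^*$ is a dual adjacency matrix for ordering (i), and likewise for ordering (ii). Hence each of the two orderings possesses a dual adjacency matrix, namely $A^*$, and so by definition each ordering is $Q$-polynomial. This is the entire argument.

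For completeness I would recall the content underlying Corollary \ref{cor:maincor}, on which everything here rests. The generation requirement holds because $A^*$ generates ${\bf M}^*$ by Lemma \ref{lem:AsE}. The tridiagonality requirement is exactly the equivalence \eqref{eq:iff}: for distinct $i,j\in\lbrack N\rbrack$ one has $E_iA^*E_j\neq 0$ if and only if $\vert i-j\vert =1$, the two implications being supplied by Propositions \ref{prop:A3} and \ref{prop:TwoPath}. In orderings (i) and (ii) the eigenspaces are arranged so that any two indices $i,j$ with $\vert i-j\vert=1$ occupy adjacent positions, while across the junction between the integer-indexed block and the half-integer-indexed block the relevant indices differ by more than $1$ and the corresponding entry of $A^*$ vanishes. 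Because the present corollary follows so directly from Corollary \ref{cor:maincor} together with a definition, there is no real obstacle; the only point deserving attention is the elementary bookkeeping that integer and half-integer indices are never coupled by $A^*$, so that every nonzero off-diagonal coupling $E_iA^*E_j$ indeed stays within one step of each ordering.
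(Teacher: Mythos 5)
Your proposal is correct and matches the paper's own proof, which is simply ``By Corollary \ref{cor:maincor}'': the corollary is an immediate consequence of the definition of a $Q$-polynomial ordering once Corollary \ref{cor:maincor} exhibits $A^*$ as a dual adjacency matrix for each ordering. Your supplementary remarks (generation of ${\bf M^*}$ via Lemma \ref{lem:AsE}, tridiagonality via \eqref{eq:iff}, and the fact that integer and half-integer indices are never coupled by $A^*$) merely recapitulate the content of Corollary \ref{cor:maincor} and are not needed for the deduction itself.
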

 \begin{proof} By Corollary \ref{cor:maincor}.
 \end{proof}
 \noindent The matrix $A$ is said to be {\it $Q$-polynomial} whenever there exists a $Q$-polynomial ordering of its eigenspaces \cite[Definition~2.4]{Lnq}.
 \begin{theorem} \label{thm:mainRes} For the poset $\mathcal A_q(N,M)$ the $q$-adjacency matrix $A$ is $Q$-polynomial.
 \end{theorem}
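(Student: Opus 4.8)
The plan is to deduce Theorem \ref{thm:mainRes} directly from the machinery already assembled, since essentially all the work has been front-loaded into the preceding corollaries. By \emph{Definition~2.4 of} \cite{Lnq}, the matrix $A$ is $Q$-polynomial precisely when some ordering of its eigenspaces is $Q$-polynomial, and Corollary \ref{cor:QpolyA} exhibits two such orderings explicitly. So the entire content of the theorem is the observation that Corollary \ref{cor:QpolyA} supplies a $Q$-polynomial ordering of the eigenspaces of $A$.

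First I would recall that the eigenvalues of $A$ are $\lbrace \theta_i \rbrace_{i \in \lbrack N \rbrack}$ by Theorem \ref{thm:AeigVal}, so that the eigenspaces $\lbrace E_iV\rbrace_{i \in \lbrack N \rbrack}$ are genuinely the objects being ordered. Then I would invoke Corollary \ref{cor:maincor}, which shows that $A^*$ serves as a dual adjacency matrix for either of the two orderings listed there; the verification behind that corollary rests on Lemma \ref{lem:AsE} (so that $A^*$ generates $\mathbf{M^*}$) together with Propositions \ref{prop:A3} and \ref{prop:TwoPath} (which establish the block-tridiagonal action encoded in \eqref{eq:iff}). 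Finally I would cite Corollary \ref{cor:QpolyA} to promote ``has a dual adjacency matrix'' to ``is a $Q$-polynomial ordering,'' and then apply the definition of a $Q$-polynomial matrix to conclude.

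There is no real obstacle here: the theorem is a one-line corollary of Corollary \ref{cor:QpolyA}, and the proof should read simply ``By Corollary \ref{cor:QpolyA}.'' The genuine difficulty lies entirely upstream, in Proposition \ref{prop:TwoPath}, where one must rule out the degenerate possibility $E_iA^*E_j=0$ for $\vert i-j\vert=1$; that argument uses the symmetry $S$ to reduce to the case $i-j=1$, builds a candidate $T$-module $W$ out of the lower eigenspaces, and derives a contradiction by intersecting $W$ with a full-diameter irreducible $T$-module. Once that proposition is in hand, the present theorem is immediate.
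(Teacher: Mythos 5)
Your proposal is correct and takes exactly the paper's approach: the paper's proof of Theorem \ref{thm:mainRes} is literally ``By Corollary \ref{cor:QpolyA}.'' Your account of the upstream chain (Corollary \ref{cor:maincor} via Lemma \ref{lem:AsE} and Propositions \ref{prop:A3}, \ref{prop:TwoPath}, then Corollary \ref{cor:QpolyA}, then the definition of $Q$-polynomial from \cite[Definition~2.4]{Lnq}) also matches the paper's structure precisely.
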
 
 \begin{proof} By Corollary \ref{cor:QpolyA}.
 \end{proof}

 \section{The irreducible $T$-modules and Leonard pairs}
 We continue to discuss the Attenuated Space poset $\mathcal A_q(N,M)$. 
 Recall the subconstituent algebra $T$ from Definition \ref{def:SubT}.
 In this section, we describe the irreducible $T$-modules from the point of view of Leonard pairs \cite{ter2}.
 \medskip
 
 \noindent
 The definition of a Leonard pair and Leonard system can be found in  \cite[Definition~1.1]{ter2} and \cite[Definition~1.4]{ter2}, respectively.
 We refer the reader to \cite{bbit,nomKraw, nomTB, introLP, terPA, LPqRac, terTDD, aa,LSnotes, vidunas}
 for the basic facts about these concepts.
 \medskip
 
 \noindent
Let $W$ denote an irreducible $T$-module, with endpoint $r$ and diameter $d$. Recall from Lemma \ref{lem:Iso1} that
\begin{align*}
0 \leq r,d\leq N, \qquad N - 2r \leq d \leq N-r, \qquad d \leq N+M-2r.
\end{align*}
Recall the parameter $t=(N-d)/2$.
In Lemmas  \ref{lem:AsW}, \ref{prop:AonW} 
we described the actions of  $A, A^*$ on $W$. Comparing these actions with  \cite[Example~20.7]{LSnotes}, we find that 
the sequence $(A; \lbrace E_{t+i} \rbrace_{i=0}^{d}; A^*; \lbrace E^*_{r+i} \rbrace_{i=0}^{d})$  acts on $W$ as a Leonard system of dual $q$-Krawtchouk type.
For notational convenience, let $\Phi$ denote this Leonard system. By Lemma  \ref{lem:AsTD} we see that $\Phi$  is bipartite
in the sense of \cite[Definition~5.1]{hanson}. By Lemmas  \ref{lem:AsW}, \ref{lem:eigvalW} and line \eqref{eq:iff},  the eigenvalue sequence  and dual eigenvalue sequence
of $\Phi$ are
\begin{align*}
\theta_i(\Phi) = \frac{q^{d-i}-q^i}{q-1}\,q^{M/2+t}, \qquad \qquad \theta^*_i(\Phi) = q^{-r-i} \qquad \qquad (0 \leq i \leq d).
\end{align*}
Using the data in \cite[Example~20.7]{LSnotes}
we find that the Leonard system $\Phi$  has parameters
\begin{align*}
&d(\Phi) = d, \qquad \qquad h(\Phi) = \frac{q^{d+t+M/2}}{q-1}, \qquad \quad h^*(\Phi)=q^{-r}, \\
&s(\Phi) = -q^{-d-1}, 
\qquad \qquad \theta_0(\Phi) = \frac{q^d-1}{q-1}\,q^{M/2+t}, \qquad \qquad \theta^*_0(\Phi) = q^{-r}.
\end{align*}
\noindent We close with a few remarks. The bipartite property is discussed in \cite{caughman1, caughman2, caughman3, nomTB}.
The Leonard systems of dual $q$-Krawtchouk type are discussed in \cite{nomNB, worDPG}.
 The Leonard systems of dual $q$-Krawtchouk type are related to the orthogonal polynomial systems of dual $q$-Krawtchouk type \cite{terPA,aa}.

\section{Acknowledgement}
The author thanks Wen Liu, \v{S}tefko Miklavi\v{c}, Kazumasa Nomura, and Murali Srinivasan for valuable comments and suggestions about the paper.


\bigskip

\noindent Paul Terwilliger \hfil\break
\noindent Department of Mathematics \hfil\break
\noindent University of Wisconsin \hfil\break
\noindent 480 Lincoln Drive \hfil\break
\noindent Madison, WI 53706-1388 USA \hfil\break
\noindent email: {\tt terwilli@math.wisc.edu }\hfil\break

\section{Statements and Declarations}

\noindent {\bf Funding}: The author declares that no funds, grants, or other support were received during the preparation of this manuscript.
\medskip

\noindent  {\bf Competing interests}:  The author  has no relevant financial or non-financial interests to disclose.
\medskip

\noindent {\bf Data availability}: All data generated or analyzed during this study are included in this published article.

\end{document}